\renewcommand{\theta}{\vartheta}
\renewcommand{\phi}{\varphi}
\renewcommand{\rho}{\varrho}
\renewcommand{\tau}{\uptau}
\renewcommand{\leq}{\leqslant}
\renewcommand{\geq}{\geqslant}
\newtheorem{Def}{Definition}[section]
\newenvironment{definition}{\begin{Def} \rm}{\end{Def}}
\newtheorem{lemma}[Def]{Lemma}
\newtheorem{proposition}[Def]{Proposition}
\newtheorem{theorem}[Def]{Theorem}
\newcommand{\AufzAnfang}{\begin{enumerate}}
\newcommand{\AufzEnde}{\end{enumerate}}
\newcommand{\Nummer}[1]{\item[{\rm (#1)}]\parindent0pt\parskip1ex}
\newcommand{\PunkteAnfang}{\begin{itemize}}
\newcommand{\PunkteEnde}{\end{itemize}}
\newcommand{\AxiomeAnfang}{\begin{description}}
\newcommand{\AxiomeEnde}{\end{description}}
\newcommand{\Axiom}[1]{\item[{\rm (#1)}]\parindent0pt\parskip1ex}
\newcommand{\komma}{,\hspace{0.6em}}
\newcommand{\Reals}{{\mathbb R}}
\newcommand{\ohne}{\backslash}
\newcommand{\equ}{\sim}
\newcommand{\equcl}[1]{\langle #1 \rangle}
\newcommand{\quot}{\sim}
\newcommand{\und}{\odot}
\newcommand{\impl}{\rightarrow}
\newcommand{\id}{\mbox{\sl id}}
\newcommand{\constant}[2]{\text{\sl c}^{#1,#2}}
\newcommand{\vonunten}{\nearrow}
\newcommand{\mitNull}[1]{{#1}^0}
\newcommand{\truncplus}{\oplus}
\newcommand{\fp}[1]{\text{\rm fp}(#1)}
\begin{document}

\title{Real coextensions as a tool for \\
constructing triangular norms
\thanks{Preprint of an article published by Elsevier in the {\sl Information Sciences} {\bf 348} (2016), 357-376. It is available online at: {\tt https://www.sciencedirect.com/science/article/pii/ S002002551630055X}.}}

\author{Thomas Vetterlein}

\affil{\footnotesize Department of Knowledge-Based Mathematical Systems \\
Johannes Kepler University Linz \\
Altenberger Stra\ss{}e 69, 4040 Linz, Austria \\
{\tt Thomas.Vetterlein@jku.at}}

\maketitle

\begin{abstract}\parindent0pt\parskip1ex

\mbox{}\vspace{-2ex}

We present in this paper a universal method of constructing left-continuous triangular norms (l.-c.\ t-norms). The starting point is an arbitrary, possibly finite, totally ordered monoid fulfilling the conditions that are characteristic for l.-c.\ t-norms: commutativity, negativity, and quanticity. We show that, under suitable conditions, we can extend this structure by substituting each element for a real interval. The process can be iterated and if the final structure obtained in this way is order-isomorphic to a closed real interval, its monoidal operation can, up to isomorphism, be identified with a l.-c.\ t-norm.

We specify the constituents needed for the construction in an explicit way. We furthermore illustrate the method on the basis of a number of examples.

\end{abstract}

\vspace{2ex}

\section{Introduction}

A fundamental issue in fuzzy set theory has been the question how the basic set-theoretical operations of intersection, union, and complement should be generalised to the case that membership comes in degrees; see, e.g., \cite[Chapter 10]{KlYu}. By general agreement, such operations should be defined pointwise. Indeed, this basic assumption is in accordance with the disjunctive interpretation of fuzzy sets and arguments on axiomatic grounds have been given, e.g., in \cite{Kle}. Otherwise, however, no standards exist and there are not even convincing arguments for restricting the possibilities to a manageable number in order to facilitate the choice.

To define the intersection of fuzzy sets, we are hence in need of a suitable binary operation on the real unit interval $[0,1]$. It is clear that only triangular norms (t-norms) come into question. The minimal requirements of a conjunction are then fulfilled: associativity, commutativity, neutrality w.r.t.\ $1$, and monotonicity in each argument. However, these properties are not very specific and t-norms exist in abundance. In fuzzy set theory, t-norms have consequently become a research field in its own right. A basic reference is the monograph \cite{KMP} and overviews are provided, e.g., in \cite{Mes1,Fod}. Among the numerous specialised studies on t-norms from recent times, we may mention, e.g., \cite{MaMe,FoRu}. The present paper is meant as a further contribution towards a better understanding of these operations.

Also in mathematical fuzzy logic, t-norms are employed for the interpretation of the conjunction. In this context, the implication is commonly assumed to be the adjoint of the conjunction; see, e.g., \cite{Haj1}. Given a t-norm, however, a residual implication does not necessarily exist; to this end, the t-norm must be in each argument left-continuous. Here, we generally assume this additional property to hold.

To explore t-norms, different perspectives can be chosen. Often t-norms have been studied as two-place real functions and geometric aspects have played a major role. For the sake of a classification, it makes sense not to distinguish between isomorphic operations. We recall that t-norms $\und_1$ and $\und_2$ are isomorphic if there is an order automorphism $\phi$ of the real unit interval such that $a \und_2 b = \phi^{-1}(\phi(a) \und_1 \phi(b))$ for any $a, b \in [0,1]$. In this case, it is reasonable to adopt an algebraic perspective. We will do so as well. Our aim is to classify, up to isomorphism, the structures $([0,1]; \leq, \und, 1)$, where $\leq$ is the natural order of the reals and $\und$ is a l.-c.\ t-norm.

The structures of the form $([0,1]; \leq, \und, 1)$ are totally ordered monoids, or tomonoids for short \cite{Gab,EKMMW}. They are, however, quite special among this type of algebras. The tomonoids in which we are interested are commutative, because so is each t-norm. Furthermore, they are negative, because the monoidal identity is the top element. Finally, the left-continuity of t-norms corresponds to a property that we call ``quantic''. In a word, the quantic, negative, and commutative tomonoids (q.n.c.\ tomonoids) whose base set is the real unit interval, are in a one-to-one correspondence with l.-c.\ t-norms.

Investigating tomonoids, we can profit from the enormous progress that the research on algebraic structures around left-continuous t-norms has made in recent times. A summary of results on residuated structures can be found, e.g., in \cite[Chapter 3]{GJKO}. In \cite{NEG2}, t-norms are especially taken into account. A number of further, more specialised overviews is contained in \cite{CHN}. We may in fact say that the situation as regards t-norms is today considerably more transparent than it used to be a few years ago.

To reveal the structure of a t-norm $\und$, the first natural step is to determine the quotients of the tomonoid based on $\und$. In fact, it has turned out that in this way the vast majority of t-norms known in the literature can be described in a uniform and transparent way. A systematic review of t-norms from this perspective was undertaken in \cite{Vet2} and in \cite{Vet3} the approach was applied in order to systematise a number of well-known t-norm construction methods.

Let us provide, on an intuitive basis, a summary of what follows. The quotients of tomonoids in which we are interested are constructed as follows. Let $\und$ be a l.-c.\ t-norm and assume that the set $F \subseteq [0,1]$ is (i) of the form $(d,1]$ or $[d,1]$ for a $d \in [0,1]$, and (ii) closed under multiplication, that is, $a \und b \in F$ for any $a, b \in F$. Then we call $F$ a filter. Define the equivalence relation $\equ_F$ by requiring $a \equ_F b$ if $a \und f \leq b$ and $b \und f \leq a$ for some $f \in F$. Then $\equ_F$ is a congruence of the tomonoid $([0,1]; \leq, \und, 1)$. This means that $[0,1]$ is partitioned into subintervals and $\und$ induces a binary operation making the set of these subintervals into a tomonoid again.

Consider, e.g., the t-norm $\und_H$ shown in Fig.\ \ref{fig:HajekschVier} (left), which is a modification of a t-norm defined by H\' ajek \cite{Haj2}. We depict $\und_H$ by indicating the mappings $[0,1] \to [0,1] \komma x \mapsto a \und_H x$ for several $a \in [0,1]$. We observe that $(\frac 3 4, 1]$ is closed under the operation $\und$ and hence a filter. Forming the quotient by $(\frac 3 4, 1]$ leads to the partition $\{0\}$, $(0, \frac 1 4]$, $(\frac 1 4$, $\frac 1 2]$, $(\frac 1 2, \frac 3 4]$, $(\frac 3 4, 1]$ of $[0,1]$. The operation $\und$ endows these five elements with the structure of the five-element \L ukasiewicz chain $L_5$; see Fig.\ \ref{fig:HajekschVier} (right).

\begin{figure}[ht!]
\begin{center}
\includegraphics[width=0.48\textwidth]{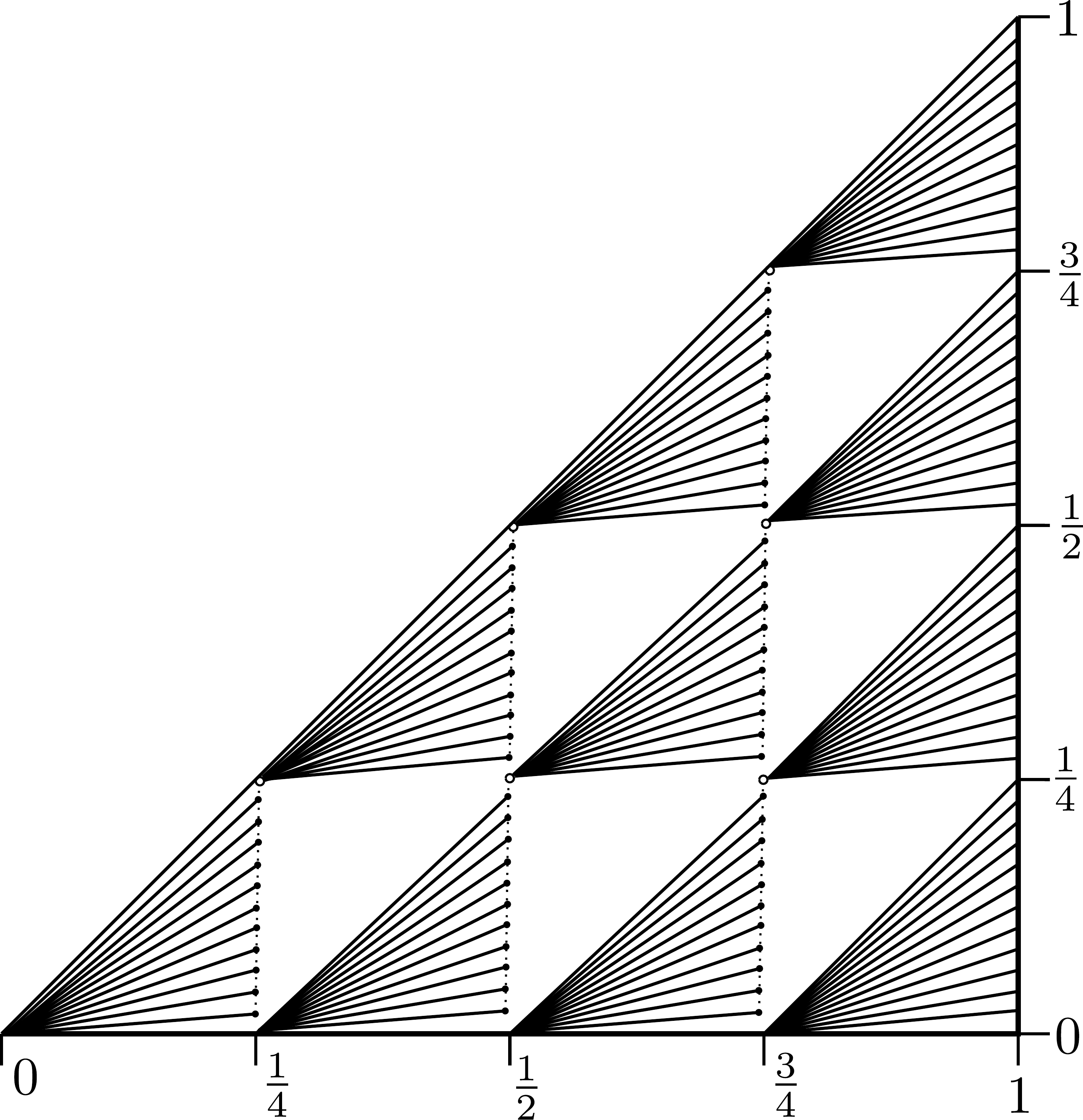}
\includegraphics[width=0.35\textwidth]{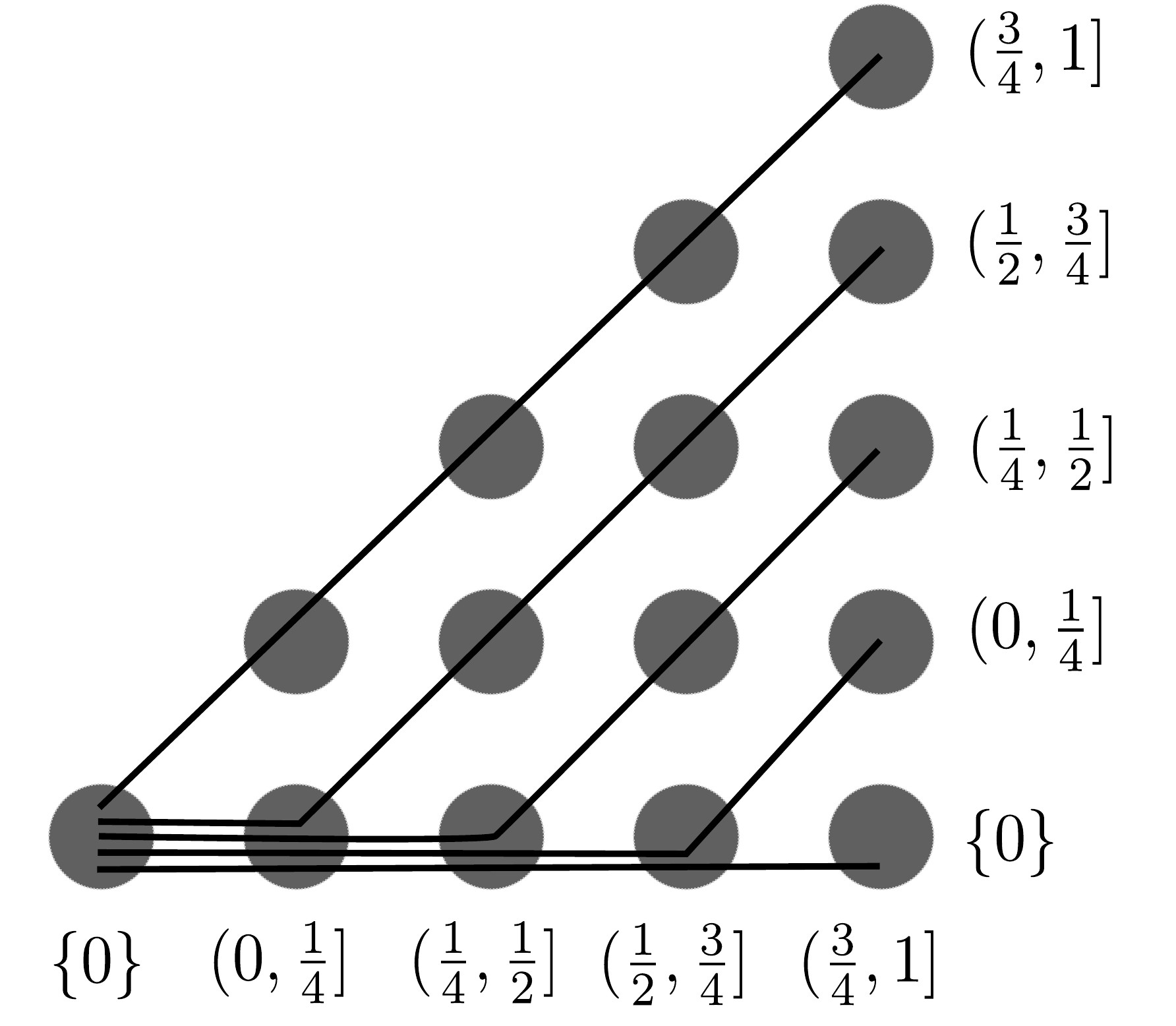}
\caption{Left: The t-norm $\und_H$. We show a selection of vertical cuts, that is, the multiplication with certain fixed elements. Right: The quotient w.r.t.\ $(\frac 3 4, 1]$ is the five-element \L ukasiewicz chain.}
\label{fig:HajekschVier}
\end{center}
\end{figure}

We hence see that a seemingly complex t-norm possesses a quotient that is as simple as the five-element \L ukasiewicz chain.

The question that we raise in the present work concerns the converse procedure. Let the tomonoid $L_5$ be given and assume that we want to expand each non-zero element to a left-open right-closed interval. Can we determine the monoidal operations on this enlarged universe such that the quotient is $L_5$? Are there any other operations apart from $\und_H$ with this property? Under suitable conditions, we will present in this paper a way of determining all possible monoidal operations on the enlarged universe. The t-norm $\und_H$ will turn out not to be the only solution and we easily determine the remaining ones as well.

Algebraically, our problem reads as follows. Let $(\mathcal P; \leq, \und, 1)$ be a q.n.c.\ tomonoid. Extend the chain $\mathcal P$ by substituting each element for a left-open or left-closed, right-open or right-closed real interval. Let $\mathcal L$ be the enlarged chain and let $F$ be the subset of $\mathcal L$ that has been chosen to replace the top element of $\mathcal P$. Our aim is to make $\mathcal L$ into a q.n.c.\ tomonoid of which $F$ is a filter and whose quotient by $F$ is the original tomonoid $\mathcal P$. To this end, we consider two situations. First, we assume that the filter $F$ is Archimedean. Adding to $F$ a least element if necessary, $F$ is then isomorphic to the tomonoid based on the \L ukasiewicz or product t-norm. In the second case we assume that $F$ is a semilattice. This in turn means that the monoidal operation is the minimum.

We will see that in both cases, there is not much room for variation. Precisely speaking, however, the monoidal operation on the extended universe is in general not uniquely determined. If not, a set of parameters that has at most the cardinality of $\mathcal P^2$ is sufficient to remove the ambiguities.

The paper is structured as follows. We compile in Section \ref{sec:definitions} the necessary basic facts about totally ordered monoids and their quotients. Section \ref{sec:composition-tomonoids} shortly introduces to composition tomonoids, a tool of geometric nature on which our specification of the extensions will rely. The main results are contained in the subsequent two parts. Namely, Section \ref{sec:Archimedean-coextensions} deals with what we call Archimedean coextensions and Section \ref{sec:semilattice-coextensions} deals with semilattice coextensions. To see how the results apply in practice, several examples of the construction of t-norms are provided in Section \ref{sec:examples}. An outlook to possible further research in the present field can be found in the concluding Section \ref{sec:conclusion}.

\section{T-norms, tomonoids and coextensions}
\label{sec:definitions}

We study in this paper the following type of functions.

\begin{definition}
A binary operation $\und$ on the real unit interval $[0,1]$ is called a {\it triangular norm}, or {\it t-norm} for short, if (i) $\und$ is associative, (ii) $\und$ is commutative, (iii) $a \und 1 = a$ for any $a \in [0,1]$, and (iv) $a \leq b$ implies $a \und c \leq b \und c$ for any $a, b, c \in [0,1]$.

Moreover, a t-norm $\und$ is called {\it left-continuous}, abbreviated {\it l.-c.}, if $\lim_{x \vonunten a} x \und b = a \und b$ for any $a \in (0,1]$ and $b \in [0,1]$.
\end{definition}

T-norms are commonly viewed as a many-valued analogue of the classical conjunction. Moreover, left-continuity is equivalent to the existence of a residual implication, hence this property is important in fuzzy logic \cite{Haj1}.

To classify t-norms, it is useful to choose an appropriate algebraic framework. There are several possibilities of doing so. T-norms can be identified, for instance, with certain MTL-algebras \cite{NEG2}. They can also be viewed as strictly two-sided commutative quantales \cite{Ros}.

The subsequent Definition \ref{def:tomonoid} represents our choice \cite{EKMMW,Gab}. Here, a poset is meant to be {\it almost complete} if the suprema of all non-empty subsets exist.

\begin{definition} \label{def:tomonoid}
A structure $({\mathcal L}; \leq, \und, 1)$ is called a {\it totally ordered monoid}, or {\it tomonoid} for short, if (i) $({\mathcal L}; \und, 1)$ is a monoid and (ii) $\leq$ is a total order on $\mathcal L$ that is compatible with $\und$, that is, for all $a, b, c, d \in {\mathcal L}$, $\;a \leq b$ and $c \leq d$ imply $a \und c \leq b \und d$.

Moreover, a tomonoid $\mathcal L$ is called {\it commutative} if so is $\und$; $\mathcal L$ is called {\it negative} if $a \leq 1$ for all $a \in {\mathcal L}$; and $\mathcal L$ is called {\it quantic} if (i) $\mathcal L$ is almost complete and (ii) for any elements $a, b_\iota$, $\iota \in I$, of $\mathcal L$ we have
\[ {\textstyle a \und \bigvee_\iota b_\iota \;=\; \bigvee_\iota (a \und b_\iota) \quad\text{and}\quad
(\bigvee_\iota b_\iota) \und a \;=\; \bigvee_\iota (b_\iota \und a).} \]
\end{definition}

We observe that the indicated properties largely coincide with those of t-norms. Hence we easily verify that these structures correspond to the operations in which we are interested.

\begin{lemma} \label{lem:t-norms-tomonoids}
Let $[0,1]$ be the real unit interval endowed with the natural order. Then a binary operation $\und \colon [0,1]^2 \to [0,1]$ is a t-norm if and only if $([0,1]; \leq, \und, 1)$ is a negative, commutative tomonoid. In this case, the t-norm is left-continuous if and only if the tomonoid is quantic.
\end{lemma}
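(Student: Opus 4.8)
The plan is to handle the two stated equivalences in turn, the first being essentially a matter of matching up axioms and the second requiring a genuine argument.

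For the first equivalence I would verify both implications by unwinding the definitions. Assuming $\und$ is a t-norm, associativity (i) together with neutrality (iii) and commutativity (ii) shows that $([0,1]; \und, 1)$ is a monoid: commutativity upgrades the one-sided law $a \und 1 = a$ to a two-sided identity. For compatibility I would derive the two-variable monotonicity from the single-variable version (iv): given $a \leq b$ and $c \leq d$, (iv) gives $a \und c \leq b \und c$, and commutativity applied to (iv) gives $b \und c \leq b \und d$, so transitivity yields $a \und c \leq b \und d$. Negativity is automatic, since $1$ is the greatest element of $[0,1]$. Conversely, a negative commutative tomonoid supplies (i)--(iii) directly, and (iv) is the special case $c = d$ of compatibility. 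Thus t-norms and negative commutative tomonoids on $[0,1]$ coincide.

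For the second equivalence I would first observe that $[0,1]$ is a complete lattice, so the almost-completeness clause of quanticity holds automatically and only the distributivity over suprema remains to be analysed; moreover, by commutativity the two displayed distributivity laws are the same, and left-continuity in the first argument entails it in the second. The implication quantic $\Rightarrow$ left-continuous is then immediate: for $a \in (0,1]$ and $b \in [0,1]$ one has $a = \bigvee_{x < a} x$, whence distributivity gives $a \und b = \bigvee_{x < a}(x \und b) = \lim_{x \vonunten a} x \und b$, using that $x \mapsto x \und b$ is monotone so that the supremum is the left limit.

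The substantial direction is left-continuous $\Rightarrow$ quantic, and this is where I expect the main obstacle. Fix $a$ and a non-empty family $(b_\iota)_\iota$ with supremum $b$, and write $s = \bigvee_\iota (a \und b_\iota)$. The inequality $s \leq a \und b$ is clear from monotonicity. For the reverse inequality I would rewrite the product using left-continuity: assuming $b > 0$, commutativity and left-continuity give $a \und b = \bigvee_{y < b}(a \und y)$. Now comes the key step, a cofinality argument: for every $y < b$ the identity $b = \bigvee_\iota b_\iota$ forces some $b_\iota > y$, and then $a \und y \leq a \und b_\iota \leq s$ by monotonicity; taking the supremum over $y < b$ yields $a \und b \leq s$. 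The degenerate case $b = 0$ forces every $b_\iota = 0$, so both sides equal $a \und 0$. Combining the two inequalities gives distributivity, and hence quanticity. The delicate point throughout is that the supremum over the arbitrary index set must be funnelled through the monotone, ``continuous'' supremum over the initial segment $\{y : y < b\}$; left-continuity is precisely what licenses this reduction.
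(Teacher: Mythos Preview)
Your argument is correct in all parts; the cofinality step in the left-continuous $\Rightarrow$ quantic direction is exactly the right idea, and the boundary case $b=0$ is handled cleanly. The paper itself does not supply a proof of this lemma, treating it as a routine verification (``we easily verify that these structures correspond to the operations in which we are interested''), so your write-up is more detailed than what the paper offers rather than different in approach.
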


We will abbreviate ``quantic, negative, commutative'' by ``q.n.c.''. By Lemma \ref{lem:t-norms-tomonoids}, l.-c.\ t-norms are in a one-to-one correspondence with q.n.c.\ tomonoids whose base set is the real unit interval. We call a tomonoid of this form a {\it t-norm monoid}.

Note that a q.n.c.\ tomonoid does not in general possess a bottom element. But we can, if necessary, add a zero in the usual way. The result is a q.n.c.\ tomonoid again, which is complete and can thus be seen as a quantale \cite{Ros}.

\begin{definition}
Let $({\mathcal L}; \leq, \und, 1)$ be a q.n.c.\ tomonoid. Let $\mitNull{\mathcal L} = {\mathcal L}$ if $\mathcal L$ has a bottom element. Otherwise, let $\mitNull{\mathcal L}$ arise from $\mathcal L$ by adding a new element $0$; extend the total order to $\mitNull{\mathcal L}$ such that $0$ is the bottom element; and extend $\und$ to $\mitNull{\mathcal L}$ such that $0 \und a = a \und 0 = 0$ for any $a \in \mitNull{\mathcal L}$.
\end{definition}

In the sequel, we always tacitly assume a q.n.c.\ tomonoid $\mathcal L$ to be a subset of $\mitNull{\mathcal L}$. In particular, infima are always meant to be calculated in $\mitNull{\mathcal L}$. The symbol $0$ denotes the bottom element of $\mitNull{\mathcal L}$.

A {\it subtomonoid} of a q.n.c.\ tomonoid $\mathcal L$ is a submonoid $F$ of $\mathcal L$ together with the total order restricted from $\mathcal L$ to $F$. By an {\it interval} of a q.n.c.\ tomonoid $\mathcal L$, we mean a non-empty subset $J$ of $\mathcal L$ such that $a, b \in J$ and $a \leq c \leq b$ imply $c \in J$. As $\mitNull{\mathcal L}$ is complete, any interval $J$ of $\mathcal L$ possesses a lower boundary $u = \inf J \in \mitNull{\mathcal L}$ and an upper boundary $v = \sup J \in {\mathcal L}$. We will denote intervals as is common for $\Reals$; for instance, $(u,v]$ denotes an interval $J$ such that $u = \inf J$ but $u \notin J$, and $v = \max J$.

A {\it homomorphism} between tomonoids is defined as expected. Furthermore, a mapping $\chi \colon A \to B$ between totally ordered sets $A$ and $B$ is called {\it sup-preserving} if, whenever the supremum of elements $a_\iota \in A$, $\iota \in I$, exists in $A$, then $\bigvee_\iota \chi(a_\iota) = \chi(\bigvee_\iota a_\iota)$ in $B$. Note that a sup-preserving mapping is in particular order-preserving.

Finally, any q.n.c.\ tomonoid $\mathcal L$ is residuated; we can define $a \impl b = \max \; \{ c \in {\mathcal L} \colon a \und c \leq b \}$ for $a, b \in {\mathcal L}$. Let $r, t \in {\mathcal L}$ and $s = r \und t$. We will call the pair $r, t$ {\it $\und$-maximal} if the following condition is fulfilled: $r$ is the largest element $x$ such that $s = x \und t$ and $t$ is the largest element $y$ such that $s = r \und y$. We note that for any pair $r, t \in {\mathcal L}$, there is a $\und$-maximal pair $\bar r, \bar t$ such that $\bar r \geq r$, $\,\bar t \geq t$, and $\bar r \und \bar t = r \und t$; for instance, let $s = r \und t$ and take $\bar r = t \impl s$ and $\bar t = \bar r \impl s$.

We now turn to quotients of tomonoids.

\begin{definition} \label{def:tomonoidcongruence}
Let $({\mathcal L}; \leq, \und, 1)$ be a q.n.c.\ tomonoid. A {\it tomonoid congruence} on $\mathcal L$ is a congruence $\equ$ of $\mathcal L$ as a monoid such that the $\equ$-classes are intervals. We endow the quotient $\equcl{\mathcal L}_\equ$ with the total order given by
\[ \equcl{a}_\equ \leq \equcl{b}_\equ \text{ if } 
   a' \leq b' \text{ for some } a' \equ a \text{ and } b' \equ b \]
for $a, b \in {\mathcal L}$, with the induced operation $\und$, and with the constant $\equcl{1}_\equ$. The resulting structure $(\equcl{\mathcal L}_\equ; \leq, \und, \equcl{1}_\equ)$ is called a {\it tomonoid quotient} of $\mathcal L$.
\end{definition}

There is no easy way of describing the quotients of q.n.c.\ tomonoids in a systematic way. There is, however, one well-known way of constructing a quotient: by means of a filter. These congruences are precisely those that preserve the residual implication as well. See, e.g., \cite{BlTs} for the more general case of residuated lattices and \cite{NEG1} for the case of MTL-algebras.

\begin{definition} \label{def:filter}
Let $({\mathcal L}; \leq, \und, 1)$ be a q.n.c.\ tomonoid. Then a {\it filter} of $\mathcal L$ is a subtomon\-oid $(F; \leq, \und, 1)$ of $\mathcal L$ such that $f \in F$ and $g \geq f$ imply $g \in F$.

If $F$ is a filter, let, for $a, b \in {\mathcal L}$,
\begin{align*}
a \quot_F b & \quad\text{if $a = b$,} \\
& \quad\text{or $a < b$ and there is a $f \in F$ such that $b \und f \leq a$,} \\
& \quad\text{or $b < a$ and there is a $f \in F$ such that $a \und f \leq b$.}
\end{align*}
Then we call $\quot_F$ the {\it congruence induced by $F$}.
\end{definition}

By the {\it trivial} tomonoid, we mean the one-element tomonoid, consisting of $1$ alone. Each non-trivial tomonoid $\mathcal L$ possesses at least two filters: $\{1\}$, the {\it trivial} filter, and $\mathcal L$, the {\it improper} filter. Let $d$ be the infimum of a filter $F$ of some q.n.c.\ tomonoid $\mathcal L$; then $F$ consists of all elements strictly larger than $d$ and possibly also $d$. If $d$ belongs to $F$, we shall write $F = [d, 1] = d^\leq$ and otherwise $F = (d,1] = d^<$.

Each filter of a q.n.c.\ tomonoid is again a q.n.c.\ tomonoid. We note that in order to ensure this fact, we have chosen part (i) of the definition of quanticity; indeed, each non-empty subset of a filter clearly possesses a supremum, but a filter need not have a bottom element.

Furthermore, a filter induces a quotient and all three properties that we consider here are preserved \cite{BlTs,Vet2}.

\begin{lemma} \label{lem:filter-quotient}
Let $({\mathcal L}; \leq, \und, 1)$ be a q.n.c.\ tomonoid and let $(F; \leq, \und, 1)$ be a filter of $\mathcal L$. Then the congruence induced by $F$ is a tomonoid congruence, and $\equcl{\mathcal L}_{\equ_F}$ is q.n.c.\ again.
\end{lemma}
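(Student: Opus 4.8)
The plan is to check in turn that $\quot_F$ is a tomonoid congruence in the sense of Definition \ref{def:tomonoidcongruence}, and then that the quotient is again q.n.c. Throughout I would use the symmetric reformulation that, for $a, b \in \mathcal L$, $a \quot_F b$ holds iff there is an $f \in F$ with $a \und f \leq b$ and $b \und f \leq a$; this is equivalent to the case distinction in Definition \ref{def:filter}, because negativity gives $x \und f \leq x$ for every $f \in F$, so that only the ``downward'' inequality is a genuine constraint.

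First, $\quot_F$ is an equivalence relation. Reflexivity and symmetry are immediate from the reformulation. For transitivity, suppose $a \quot_F b$ via $f$ and $b \quot_F c$ via $g$, and put $h = f \und g \in F$ (using that $F$ is a submonoid). Commutativity, associativity and compatibility then give $a \und h \leq b \und g \leq c$ and $c \und h \leq b \und f \leq a$, so $a \quot_F c$. The same bookkeeping shows compatibility with $\und$: from $a \quot_F a'$ via $f$ and $b \quot_F b'$ via $g$ one gets $(a \und b) \und (f \und g) = (a \und f) \und (b \und g) \leq a' \und b'$ and symmetrically, whence $a \und b \quot_F a' \und b'$. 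Finally the classes are convex: if $a \quot_F b$ via $f$ with $a \leq b$ and $a \leq c \leq b$, then $c \und f \leq b \und f \leq a \leq c$ and $a \und f \leq a \leq c$, so $c \quot_F a$. Hence $\quot_F$ is a tomonoid congruence.

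For the quotient, commutativity is inherited from $\mathcal L$, and negativity is clear since $a \leq 1$ forces $\equcl a \leq \equcl 1$, so $\equcl 1$ is the top element. The substance is quanticity, and here the decisive point is that the canonical surjection $\pi \colon \mathcal L \to \equcl{\mathcal L}_{\equ_F}$ is sup-preserving; equivalently, that every $\quot_F$-class has a greatest element. I would prove the latter via the residuated description of the relation: for $a \leq b$ one has $a \quot_F b$ iff $b \impl a \in F$ (indeed $b \und f \leq a$ iff $f \leq b \impl a$, and $F$ is upward closed). Given a class $C$ with $v = \sup C$ and a fixed $a \in C$ with $a \leq v$, the elements $b \in C$ with $a \leq b < v$ satisfy $b \impl a \in F$, and since residuation turns the join $v = \bigvee_b b$ into the meet $v \impl a = \bigwedge_b (b \impl a)$, it remains to see that this meet still lies in $F$. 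If $F = [d,1]$ this is immediate, as the meet is $\geq d$; if $F = (d,1]$ one uses closure of $F$ under $\und$ to exclude that the meet drops to the boundary $d$. I expect this right-closedness of the classes to be the main obstacle of the whole argument.

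Once $\pi$ is known to preserve suprema, the rest is a short computation. Almost completeness of the quotient follows because any non-empty family $\equcl{b_\iota}$ satisfies $\bigvee_\iota \equcl{b_\iota} = \pi(\bigvee_\iota b_\iota)$, the inner join existing by quanticity of $\mathcal L$. Distributivity is then the chase
\[
\equcl c \und \bigvee_\iota \equcl{b_\iota}
= \pi(c) \und \pi\bigl(\textstyle\bigvee_\iota b_\iota\bigr)
= \pi\bigl(c \und \textstyle\bigvee_\iota b_\iota\bigr)
= \pi\bigl(\textstyle\bigvee_\iota (c \und b_\iota)\bigr)
= \bigvee_\iota \equcl{c \und b_\iota}
= \bigvee_\iota (\equcl c \und \equcl{b_\iota}),
\]
where the third equality is quanticity in $\mathcal L$ and the remaining ones use that $\pi$ is a sup-preserving homomorphism. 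Should right-closedness fail in some pathological left-continuous case, I would instead compute the quotient join of $\equcl{b_\iota}$ directly as $\equcl{\sup \bigcup_\iota C_{b_\iota}}$ when the family has no greatest member, and verify distributivity by the corresponding case distinction; but I would first invest the effort in establishing sup-preservation, since it makes the argument uniform.
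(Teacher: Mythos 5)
Your treatment of the congruence itself is correct and matches what the paper regards as the routine part: the symmetric reformulation via negativity, transitivity and compatibility using $f \und g$, and the convexity of the classes are all fine. (The paper in fact dismisses all of this as obvious and cites \cite{Vet2} for quanticity, so you are attempting more than the printed proof contains.)

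The gap is in the quanticity argument. Your ``decisive point'' --- that every $\quot_F$-class contains its supremum, equivalently that the canonical surjection is sup-preserving --- is false when $F$ has no smallest element, and the paper's own Lemma \ref{lem:filters-tnorm}(ii) already signals this by listing $(u,v)$ and $[u,v)$ among the possible class shapes. Concretely, take the nilpotent minimum t-norm and $F = (\tfrac 1 2, 1]$: the class of $0$ is $[0,\tfrac 1 2)$ while $\{\tfrac 1 2\}$ is a separate singleton class, so the class of $0$ does not contain its supremum. Your own residuation computation locates the failure exactly: with $a = 0$ one has $b \impl 0 = 1-b$ for $b < \tfrac 1 2$, and $\bigwedge_b (1-b) = \tfrac 1 2 \notin F$, so the meet does drop to the boundary $d$, and closure of $F$ under $\und$ does nothing to prevent it. (Your argument is sound for $F = [d,1]$, consistent with Lemma \ref{lem:filters-tnorm}(i).) Consequently the identity $\bigvee_\iota \equcl{b_\iota}_{\equ_F} = \equcl{\bigvee_\iota b_\iota}_{\equ_F}$, on which your almost-completeness and distributivity chase rests, also fails: in the example $\bigvee_{b<1/2}\equcl{b}_{\equ_F} = \equcl{0}_{\equ_F} \neq \equcl{1/2}_{\equ_F}$. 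Your fallback --- the join of a family of classes is its greatest member when one exists and $\equcl{\sup\bigcup_\iota C_{b_\iota}}_{\equ_F}$ otherwise --- is the right formula, but verifying distributivity in the second case is precisely the non-trivial content of the lemma (it is what the paper outsources to \cite{Vet2}), and it is not carried out; as it stands the proof of quanticity is incomplete.
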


\begin{proof}
This is obvious except for the fact that the quotient is again quantic. For this latter fact, see \cite{Vet2}.
\end{proof}

\begin{definition} \label{def:coextension}
Let $({\mathcal L}; \leq, \und, 1)$ be a q.n.c.\ tomonoid. Let $(F; \leq, \und, 1)$ be a filter of $\mathcal L$ and let $\quot_F$ be the congruence induced by $F$. Then we refer to the congruences classes as {\it $F$-classes}. Let $\mathcal P$ be the quotient of $\mathcal L$ by $\quot_F$. Then we call $\mathcal P$ the {\it quotient of $\mathcal L$ by $F$}. Furthermore, we call $\mathcal L$ a {\it coextension of $\mathcal P$ by $F$} and we refer to $F$ as the {\it extending} tomonoid.
\end{definition}

The aim of this paper is to present ways of constructing coextensions of q.n.c.\ tomonoids. Let us add a comment on our terminology. There are two basic ways of enlarging an algebra $A$ to an algebra $B$. Either we might want to determine algebras $B$ such that $A$ is (isomorphic to) a quotient of $B$; or we might require that $A$ is a subalgebra of $B$. In the latter case, $B$ is customarily called an extension of $A$. For instance, ideal extensions of semigroups have been widely studied; see, e.g., \cite{ClPr}. Here, we deal with the former case, where the situation with regard to the terminology is not so clear. Indeed, $A$ is either called an extension as well, or a coextension. For the sake of clarity and in accordance with Grillet's monograph on semigroups \cite{Gri}, we have opted for the term ``coextension'', adapted in the natural way to the case that an additional order is present.

We can say the following about the congruences induced by filters.

\begin{lemma} \label{lem:filters-tnorm}
Let $\mathcal L$ be a q.n.c.\ tomonoid and let $d \in \mitNull{\mathcal L}$.
\AufzAnfang
\Nummer{i} $d^\leq = [d,1]$ is a filter if and only if $d$ is an idempotent element of $\mathcal L$.

In this case, each $d^\leq$-class is of the form $[u,v]$ for some $u, v \in {\mathcal L}$ such that $u \leq v$. The $d^\leq$-class containing $1$ is $[d,1]$.

\Nummer{ii} $d^< = (d,1]$ is a filter if and only if $d \neq 1$, $\,d = \bigwedge_{a > d} a$, and $d < a \und b$ for all $a, b > d$.

In this case, each $d^<$-class is of the form $(u,v)$, $(u,v]$, $[u,v)$, or $[u,v]$ for some $u, v \in \mitNull{\mathcal L}$ such that $u < v$, or $\{u\}$ for some $u \in {\mathcal L}$. The $d^<$-class containing $1$ is $(d,1]$.
\AufzEnde
\end{lemma}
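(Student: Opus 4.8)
The plan is to treat the two parts separately, in each case first settling the ``if and only if'' for being a filter and then analysing the shape of the congruence classes.

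For part (i) the equivalence is quickly disposed of. If $[d,1]$ is a filter, it is closed under $\und$, so $d \und d \in [d,1]$ forces $d \und d \geq d$; since $\mathcal L$ is negative we also have $d \und d \leq d \und 1 = d$, whence $d \und d = d$. Conversely, if $d$ is idempotent then $[d,1]$ is upward closed by definition and contains $1$, and for $a, b \geq d$ compatibility gives $a \und b \geq d \und d = d$, so $[d,1]$ is closed under $\und$ and is a filter. To describe the classes I would first reduce the induced congruence to an equation: for idempotent $d$, I claim $a \quot_{d^\leq} b$ holds iff $a \und d = b \und d$. Indeed, $d$ being the least element of $[d,1]$, the defining clause ``$b \und f \leq a$ for some $f \in [d,1]$'' (with $a < b$) is equivalent to $b \und d \leq a$; multiplying by $d$ and using idempotency gives $b \und d \leq a \und d$, while $a \leq b$ gives $a \und d \leq b \und d$, so the two are equal, and the converse implication is immediate from $a \und d \leq a$.

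Granting this, the class of $a$ is exactly the preimage of $c := a \und d$ under the monotone map $x \mapsto x \und d$. Negativity gives $c \leq a$ and, for every $b$ in the class, $c = b \und d \leq b$, so $c$ is the least element of the class (and $c \in \mathcal L$, as $d \in \mathcal L$ whenever $[d,1]$ is a filter); thus the class is left-closed at $c$. For the upper end, let $v$ be the supremum of the class, which exists in $\mathcal L$ by quanticity; the sup-preservation law yields $v \und d = \bigvee_{b}(b \und d) = c$, so $v$ itself lies in the class, which is therefore the closed interval $[c,v]$. Taking $a = 1$ gives $c = d$ and shows that the class of $1$ is $[d,1]$.

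For part (ii) I would again read each requirement off the filter axioms for the set $(d,1] = \{a \in \mathcal L : a > d\}$. A filter contains $1$ and is hence non-empty, which forces $d \neq 1$; upward closure is automatic by negativity; the demand that the infimum of the filter equal $d$ and not be attained is exactly $d = \bigwedge_{a > d} a$; and closure under $\und$ is precisely $d < a \und b$ for all $a, b > d$. Conversely, these three conditions make $(d,1]$ an upward-closed submonoid containing $1$ with infimum $d \notin (d,1]$, i.e.\ the filter $d^<$. The shape of the classes then follows from Lemma~\ref{lem:filter-quotient}: each class is a non-empty interval of $\mathcal L$, and in the complete chain $\mitNull{\mathcal L}$ every such interval has an infimum $u$ and a supremum $v$ in $\mitNull{\mathcal L}$, giving one of the four bounded forms when $u < v$, or a singleton $\{u\}$ with $u \in \mathcal L$. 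Finally, for $b < 1$ we have $b \quot_{d^<} 1$ iff some $f \in (d,1]$ satisfies $f = 1 \und f \leq b$, which by $d = \bigwedge_{a>d} a$ holds precisely when $b > d$; hence the class of $1$ is $(d,1]$.

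The routine ingredients are the axiom checks and the interval bookkeeping. The one genuine step is the class description in part (i): everything there rests on reducing the congruence to the equation $a \und d = b \und d$ and then invoking quanticity to push the idempotent $d$ through the supremum, $v \und d = \bigvee_b (b \und d)$, which is what guarantees right-closedness. I expect this use of sup-preservation to be the main point to get right; the analogous statement for infima is unavailable, and that very asymmetry is what permits the half-open classes appearing in part (ii).
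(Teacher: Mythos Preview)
Your proof is correct and follows essentially the same route as the paper's. The paper also derives the filter criteria directly from closure under $\und$, obtains the interval shape of an $F$-class from the definition of a tomonoid congruence, and---for part~(i)---uses quanticity to secure the right endpoint; your explicit reformulation $a \quot_{d^\leq} b \Leftrightarrow a \und d = b \und d$ is a tidy repackaging of the same computation rather than a different idea.
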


\begin{proof}
The interval $[d,1]$ is a filter if and only if $[d,1]$ is closed under multiplication. By the negativity of $\mathcal L$, this is the case if and only if $d \und d = d$, that is, if $d$ is an idempotent element. The first part of (i) follows.

Furthermore, $d$ can be the lower bound of an interval only if $d < 1$ and $d = \bigwedge_{a > d} a$. In this case, $(d,1]$ is a filter if and only if $(d,1]$ is closed under multiplication, that is, if $a, b > d$ implies $a \und b > d$. Thus also the first part of (ii) is clear.

Consider next an arbitrary filter $F$ of $\mathcal L$ and let $R$ be an $F$-class. By definition of a tomonoid congruence, $R$ is an interval. Let $u = \inf R$ and $v = \sup R$. Then, depending on whether or not $u$ and $v$ belong to $R$, we see that $R$ has one of the forms indicated in the last paragraph of the lemma. The second part of (ii) is shown.

To complete the proof, assume now that $F$ possesses the smallest element $d$. Let $a \in R$. Then, for $b \leq a$, we have that $b \in R$ if and only if $a \und d \leq b$, hence $R$ possesses the smallest element $u = a \und d$. Furthermore, for any $b \geq a$, we have that $b \in R$ if and only if $b \und d \leq a$. Since $\mathcal L$ is quantic, $\{ b \in \mathcal L \colon b \geq a \text{ and } b \und d \leq a \}$ has a maximal element $v$. Hence $R = [u,v]$ and also the first part of (i) follows.
\end{proof}

We focus in this paper on coextensions of q.n.c.\ tomonoids by filters and we are especially interested in the description of how a t-norm monoid arises from one of its quotients. But if a coextension is a t-norm monoid, each congruence class is a subinterval of $[0,1]$. For this reason we are motivated to focus on the following particular type of coextensions.

In what follows, a singleton is meant to be a set consisting of exactly one element. Moreover, by a real interval we mean either a singleton or a subset of the reals of the form $(a,b)$, $(a,b]$, $[a,b)$, or $[a,b]$ for some $a, b \in \Reals$ such that $a < b$. In addition, we use the real sets $\Reals^- = \{ r \in \Reals \colon r \leq 0 \}$ and $\Reals^+ = \{ r \in \Reals \colon r \geq 0 \}$.

\begin{definition} \label{def:real-coextension}
Let $\mathcal P$ be the quotient of the q.n.c.\ tomonoid $({\mathcal L}; \leq, \und, 1)$ by the filter $F$ of $\mathcal L$. Assume that each $F$-class is order-isomorphic to a real interval. Then we call $\mathcal L$ a {\it real coextension} of $\mathcal P$.
\end{definition}

We will furthermore impose certain conditions on the extending filter. We consider two contrasting cases: we assume that the extending tomonoid is either Archimedean or a semilattice. 

For an element $a$ of a tomonoid and $n \geq 1$, we will write $a^n = a \und \ldots \und a$ ($n$ factors).

\begin{definition} \label{def:Archimedean}
Let $({\mathcal L}; \leq, \und, 1)$ be a q.n.c.\ tomonoid. Then $\mathcal L$ is called \emph{Archimedean} if, for each $a, b \in {\mathcal L}$ such that $a < b < 1$, we have $b^n \leq a$ for some $n \geq 1$. A coextension of a q.n.c.\ tomonoid by an Archimedean tomonoid is called \emph{Archimedean} as well.

Furthermore, $\mathcal L$ is called a \emph{semilattice} if the monoidal product is the minimum, that is, if for any $a, b \in {\mathcal L}$ we have $a \und b = a \wedge b$. A coextension of a q.n.c.\ tomonoid by a semilattice is called \emph{semilattice} as well.
\end{definition}

We note that the Archimedean property is defined in a way that the monoidal identity is disregarded; otherwise it would only apply to the trivial tomonoid. On the other hand, the bottom element, if it exists, is not assumed to play an extra role, in contrast to the definition given, e.g., in \cite{KMP}. We further note that the definition of a semilattice is usually given for partially ordered monoids and the monoidal product is required to be the infimum. This is what the notion in fact suggests. As we deal with a total order, the infimum is the minimum; but we keep the common terminology.

\section{Composition tomonoids}
\label{sec:composition-tomonoids}

Each t-norm $\und$ can obviously be identified with the collection of its vertical cuts. Knowing, for each $a \in [0,1]$, the mapping $\lambda_a \colon [0,1] \to [0,1] \komma x \mapsto a \und x$ is equivalent to knowing $\und$ itself. Under the correspondence $a \mapsto \lambda_a$, we may in fact identify the t-norm monoid $([0,1]; \leq, \und, 1)$ with $\Lambda = \{ \lambda_a \colon a \in [0,1] \}$. The total order of $[0,1]$ then corresponds to the pointwise order of $\Lambda$; the operation $\und$ on $[0,1]$ becomes the functional composition on $\Lambda$; and the constant $1$ corresponds to the identity mapping.

More generally, each totally ordered monoid may be viewed as an $S$-poset. An $S$-poset consists of order-preserving mappings of some poset to itself and is endowed with the functional composition as a binary operation \cite{BuMa}. As in our previous papers \cite{Vet2,Vet3}, we will make essential use of the representation of tomonoids by $S$-posets. We will not use the notion of an $S$-poset, however; the following definition will be applied.

\begin{definition} \label{def:composition-tomonoid}
Let $(R; \leq)$ be a chain, and let $\Phi$ be a set of order-preserving mappings from $R$ to $R$. We denote by $\leq$ the pointwise order on $\Phi$, by $\circ$ the functional composition, and by $\id_R$ the identity mapping on $R$. Assume that {\rm (i)} $\leq$ is a total order on $\Phi$, {\rm (ii)} $\Phi$ is closed under $\circ$, and {\rm (iii)} $\id_R \in \Phi$. Then we call $(\Phi; \leq, \circ, \id_R)$ a \emph{composition tomonoid} on $R$.
\end{definition}

In what follows, commutativity of mappings refers to their functional composition. That is, two mappings $\phi, \psi \colon A \to A$ {\it commute} if $\phi \circ \psi = \psi \circ \phi$. Furthermore, a mapping $\phi$ of a poset $A$ to itself is called {\it shrinking} if $\phi(a) \leq a$ for all $a \in A$.

\begin{proposition} \label{prop:composition-tomonoid}
Let $(\Phi; \leq, \circ, \id_R)$ be a composition tomonoid over a chain $R$. Then $\Phi$ is a tomonoid.

Furthermore, $\Phi$ is commutative if and only if we have:
\AxiomeAnfang
\Axiom{C1} Any two mappings $\lambda_1, \lambda_2 \in \Phi$ commute.
\AxiomeEnde
$\Phi$ is negative if and only if we have:
\AxiomeAnfang
\Axiom{C2} Any mapping $\lambda \in \Phi$ is shrinking.
\AxiomeEnde
Finally, $\Phi$ is quantic if the following conditions hold:
\AxiomeAnfang
\Axiom{C3} Each $\lambda \in \Phi$ is sup-preserving.

\Axiom{C4} The pointwise calculated supremum of any non-empty subset of $\Phi$ exists and is in $\Phi$.
\AxiomeEnde
\end{proposition}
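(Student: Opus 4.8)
The plan is to verify each of the four assertions directly from the definitions, handling the tomonoid claim first and then the three qualifiers in turn. For the tomonoid claim, I would note that $(\Phi; \circ, \id_R)$ is a monoid automatically: functional composition is associative, $\id_R$ is a two-sided identity, and closure under $\circ$ is hypothesis (ii) of Definition \ref{def:composition-tomonoid}. Hypothesis (i) makes $\leq$ a total order, so the only substantive point is compatibility, namely that $\phi_1 \leq \psi_1$ and $\phi_2 \leq \psi_2$ imply $\phi_1 \circ \phi_2 \leq \psi_1 \circ \psi_2$. This is exactly where order-preservation of the mappings is used: for each $x \in R$ I would chain $\phi_1(\phi_2(x)) \leq \phi_1(\psi_2(x)) \leq \psi_1(\psi_2(x))$, the first inequality because $\phi_1$ is order-preserving and $\phi_2(x) \leq \psi_2(x)$, the second because $\phi_1 \leq \psi_1$ pointwise. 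Since this holds for all $x$, compatibility follows.

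The commutativity and negativity equivalences are then a matter of unwinding definitions. Commutativity of the tomonoid means $\phi \circ \psi = \psi \circ \phi$ for all $\phi, \psi \in \Phi$, which is verbatim (C1). Negativity means $\phi \leq \id_R$ for every $\phi \in \Phi$; as the order is pointwise and $1 = \id_R$, this says precisely $\phi(x) \leq x$ for all $x$, i.e.\ that $\phi$ is shrinking, which is (C2). No computation beyond reading off the definitions is required here.

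The quanticity claim is the one demanding the most care, and I expect the only real subtlety to lie there. First I would use (C4) to obtain almost completeness: for a non-empty family $\{\psi_\iota\}$, its pointwise supremum $\psi$ lies in $\Phi$ by (C4), and since the order on $\Phi$ is the pointwise one, $\psi$ is the supremum in $\Phi$ (any $\Phi$-upper bound dominates each $\psi_\iota(x)$, hence the pointwise sup). It remains to check the two distributivity laws. The right-hand law $((\bigvee_\iota \psi_\iota) \circ \phi)(x) = (\bigvee_\iota \psi_\iota)(\phi(x)) = \sup_\iota \psi_\iota(\phi(x)) = (\bigvee_\iota (\psi_\iota \circ \phi))(x)$ holds once suprema are known to be pointwise by (C4), since composing on the right merely evaluates the supremum at the single point $\phi(x)$; no appeal to (C3) is needed. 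The left-hand law is where (C3) becomes essential: $(\phi \circ \bigvee_\iota \psi_\iota)(x) = \phi(\sup_\iota \psi_\iota(x)) = \sup_\iota \phi(\psi_\iota(x)) = (\bigvee_\iota (\phi \circ \psi_\iota))(x)$, the middle step using that $\phi$ is sup-preserving. The point to watch, and the reason the two hypotheses must cooperate, is that the supremum $\sup_\iota \psi_\iota(x)$ is guaranteed to exist in $R$ exactly because (C4) identifies $(\bigvee_\iota \psi_\iota)(x)$ with it, so that (C3) may legitimately be applied to it.
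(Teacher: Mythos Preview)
Your proposal is correct and follows essentially the same route as the paper's proof: the paper dismisses the tomonoid claim and the (C1)/(C2) equivalences as ``easily checked'' and ``obvious'', while you spell out the compatibility computation explicitly, and for quanticity both you and the paper use (C4) for almost completeness and then verify the two distributivity laws pointwise, invoking (C3) only for the left law $\mu \circ \bigvee_\iota \lambda_\iota = \bigvee_\iota (\mu \circ \lambda_\iota)$. Your added remark that (C4) guarantees the existence of $\sup_\iota \psi_\iota(x)$ in $R$ before (C3) is applied is a nice clarification the paper leaves implicit.
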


\begin{proof}
The fact that $\Phi$ is a tomonoid is easily checked. Moreover, the indicated characterisation of commutativity and negativity of $\Phi$ is obvious.

Assume (C3) and (C4). Then any non-empty subset of $\Phi$ possesses by (C4) w.r.t.\ the pointwise order a supremum; that is, $\Phi$ is almost complete. Furthermore, let $\lambda_\iota, \mu \in \Phi$, $\iota \in I$. Then we have by (C4) for any $r \in R$
\[ ((\bigvee_\iota \lambda_\iota) \circ \mu)(r)
\,=\, (\bigvee_\iota \lambda_\iota)(\mu(r))
\,=\, \bigvee_\iota \lambda_\iota(\mu(r))
\,=\, \bigvee_\iota (\lambda_\iota \circ \mu)(r)
\,=\, (\bigvee_\iota (\lambda_\iota \circ \mu))(r). \]
Moreover, we have by (C3) and (C4) for any $r \in R$
\[ (\mu \circ \bigvee_\iota \lambda_\iota)(r)
\;=\; \mu(\bigvee_\iota \lambda_\iota(r))
\;=\; \bigvee_\iota \mu(\lambda_\iota(r))
\;=\; \bigvee_\iota(\mu \circ \lambda_\iota)(r)
\;=\; (\bigvee_\iota(\mu \circ \lambda_\iota))(r). \]
We conclude that $\Phi$ is quantic.
\end{proof}

For composition tomonoids, two different kinds of isomorphisms exist, which we have to distinguish carefully. Let $(\Phi; \leq, \circ, \id_R)$ be a composition tomonoid over a chain $R$. Then we say that $\Phi$ is {\it c-isomorphic} to a further composition tomonoid $(\Psi; \leq, \circ, \id_S)$ over a chain $S$ if there is an order isomorphism $\phi \colon R \to S$ such that $\Psi = \{ \phi \circ \lambda \circ \phi^{-1} \colon \lambda \in \Phi \}$. Note that a c-isomorphism is a bijection between the underlying sets and does not directly refer to a monoidal structure.

Moreover, we apply the usual notion of an isomorphism between tomonoids also to composition tomonoids. Namely, a tomonoid $({\mathcal L}; \leq, \und, 1)$ is called {\it isomorphic} to $(\Phi; \leq, \circ, \id_R)$ if there is a bijective mapping $\pi \colon \mathcal L \to \Phi$ such that, for any $a, b \in {\mathcal L}$, $\;a \leq b$ iff $\pi(a) \leq \pi(b)$, $\;\pi(a \und b) = \pi(a) \circ \pi(b)$, and $\pi(1) = \id_R$. Note that an isomorphism is a mapping between tomonoids and does not depend on whether or not the tomonoid is represented by mappings.

Clearly, a c-isomorphism between two composition tomonoids gives rise to an isomorphism. The converse, however, does not hold. Consider the composition tomonoids $\Phi = \{ \lambda_a \colon a \in \Reals^- \}$, where $\lambda_a \colon \Reals^- \to \Reals^- \komma x \mapsto x + a$, and $\Psi = \{ \bar\lambda_a \colon a \in \Reals^- \}$, where $\bar\lambda_a \colon \Reals \to \Reals \komma x \mapsto x + a$. As $\Reals^-$ and $\Reals$ are not order-isomorphic, $\Phi$ and $\Psi$ are not c-isomorphic. However, as tomonoids, $\Phi$ and $\Psi$ are isomorphic. Indeed, both are isomorphic to $(\Reals^-; \leq, +, 0)$.

We next recall that each tomonoid can be viewed as a composition tomonoid. In the unordered case, this is simply the regular representation of monoids \cite{ClPr}.

\begin{proposition} \label{prop:tomonoid-composition-tomonoid}
Let $({\mathcal L}; \leq, \und, 1)$ be a q.n.c.\ tomonoid. For each $a \in {\mathcal L}$, put
\begin{equation} \label{fml:translation}
\lambda_a \colon {\mathcal L} \to {\mathcal L} \komma x \mapsto a \und x,
\end{equation}
and let $\Lambda = \{ \lambda_a \colon a \in {\mathcal L} \}$. Then $(\Lambda; \leq, \circ, \id_{\mathcal L})$ is a composition tomonoid on $\mathcal L$ fulfilling {\rm (C1)}--{\rm (C4)}. Moreover,
\begin{equation} \label{fml:Cayley}
\pi \colon {\mathcal L} \to \Lambda \komma a \mapsto \lambda_a
\end{equation}
is an isomorphism of the tomonoids $({\mathcal L}; \leq, \und, 1)$ and $(\Lambda; \leq, \circ, \id_{\mathcal L})$.
\end{proposition}

\begin{proof}[Proof (sketched).]
Since $\leq$ is a compatible total order, the mappings $\lambda_a$ are order-preserving. Moreover, for any $a, b \in {\mathcal L}$ such that $a \leq b$, we have $\lambda_a(x) = a \und x \leq b \und x = \lambda_b(x)$ for all $x \in {\mathcal L}$. We conclude that $\lambda_a \leq \lambda_b$ if and only if $a \leq b$; in particular, the (pointwise) order on $\Lambda$ is a total order. Furthermore, for any $a, b \in {\mathcal L}$, we have $\lambda_a(\lambda_b(x)) = \lambda_{a \und b}(x)$ for all $x \in {\mathcal L}$, that is, $\Lambda$ is closed under $\circ$. Finally, $\lambda_1 = \id_{\mathcal L}$. We have shown that $(\Lambda; \leq, \circ, \id_{\mathcal L})$ is a composition tomonoid.

Since $\und$ is associative and commutative, $\lambda_a \circ \lambda_b = \lambda_b \circ \lambda_a$ for any $a, b \in {\mathcal L}$, that is, (C1) holds. Furthermore, by negativity, $\lambda_a(x) \leq x$ for any $a, x \in {\mathcal L}$, that is, (C2) holds. Since $\mathcal L$ is quantic, $\lambda_a$ is sup-preserving for all $a$, that is, (C3) holds. Finally, let $A$ be a non-empty subset of $\mathcal L$ and let $b = \sup A$. Then, by quanticity, $\bigvee_{a \in A} \lambda_a(x) = \bigvee_{a \in A} (a \und x) = (\bigvee_{a \in A} a) \und x = b \und x = \lambda_b(x)$ for all $x \in {\mathcal L}$. Hence $\lambda_b$ is the pointwise supremum of the $\lambda_a$, $a \in A$, and (C4) follows.

Finally, we easily check that $\pi$, given by (\ref{fml:Cayley}), is a homomorphism. Moreover, $\pi$ is surjective by construction and $\pi$ is injective because, for $a, b \in \mathcal L$, $\lambda_a = \lambda_b$ implies $a = \lambda_a(1) = \lambda_b(1) = b$. Hence $\pi$ is indeed an isomorphism.
\end{proof}

Given a tomonoid $({\mathcal L}; \leq, \und, 1)$, each mapping (\ref{fml:translation}) is called a {\it left inner translation}, or just a {\it translation} for short. Moreover, we will call the composition tomonoid $(\Lambda; \leq,$ $\circ, \id_{\mathcal L})$ consisting of the translations of $\mathcal L$ the \emph{Cayley tomonoid} of $\mathcal L$.

We next consider a q.n.c.\ tomonoid $\mathcal L$ together with its quotient by a filter $F$. The Cayley tomonoid $\Lambda$ of $\mathcal L$ describes the tomonoid as whole. We will see that the restrictions of the translations to $F$-classes give rise to a ``section-wise decomposition'' of $\Lambda$ and thus to a description of $\mathcal L$ in parts.

\begin{definition} \label{def:Lambdas}
Let $({\mathcal L}; \leq, \und, 1)$ be a q.n.c.\ tomonoid, let $F$ be a filter of $\mathcal L$, and let $\mathcal P$ be the quotient of $\mathcal L$ by $F$.
Let $R$ be an $F$-class, that is, $R \in {\mathcal P}$. For each $f \in F$, let $\lambda_f^R$ be the restriction of $\lambda_f$ in domain and range to $R$ and put
\[ \Lambda^R \;=\; \{ \lambda_f^R \colon f \in F \}. \]
Moreover, let $T$ be a further $F$-class distinct from $F$, that is, $T \in {\mathcal P} \backslash \{F\}$. Let $S = R \und T$. For any $t \in T$, let $\lambda_t^{R,S} \colon R \to S$ be the mapping arising from $\lambda_t$ by the restriction of its domain to $R$ and of its range to $S$ and put
\[ \Lambda^{R,S} \;=\; \{ \lambda_t^{R,S} \colon t \in T \}. \]
\end{definition}

The sets of mappings specified in Definition \ref{def:Lambdas} will play a major role in what follows. Note that they are well-defined. Indeed, in the notation of Definition \ref{def:Lambdas}, each $F$-class is by construction closed under the multiplication with an element of $F$, that is, $\lambda_f(r) \in R$ for any $r \in R$ and $f \in F$. Moreover, $R \und T = S$ means that $r \und t \in S$ for any $r \in R$ and $t \in T$, that is, $\lambda_t(r) \in S$.

\begin{lemma} \label{lem:coextension-Dreiecke}
Let $({\mathcal L}; \leq, \und, 1)$ be a non-trivial q.n.c.\ tomonoid and let $F$ be a non-trivial filter of $\mathcal L$. Let $\mathcal P$ be the quotient of $\mathcal L$ by $F$ and let $R \in {\mathcal P}$.
Then $(\Lambda^R; \leq, \circ, \id_R)$ is a composition tomonoid fulfilling {\rm (C1)}--{\rm (C4)}; in particular, $\Lambda^R$ is a q.n.c.\ tomonoid. Moreover,
\begin{equation} \label{fml:rho}
\varrho_R \colon F \to \Lambda^R \komma f \mapsto \lambda^R_f
\end{equation}
is a sup-preserving homomorphism from $(F; \leq, \und, 1)$ onto $(\Lambda^R; \leq, \circ, \id_R)$.
\end{lemma}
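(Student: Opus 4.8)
The plan is to verify directly that $\Lambda^R$ satisfies conditions (i)--(iii) of Definition \ref{def:composition-tomonoid} together with (C1)--(C4), so that Proposition \ref{prop:composition-tomonoid} yields that $\Lambda^R$ is a q.n.c.\ tomonoid; the assertion about $\varrho_R$ then falls out by reinterpreting the same computations. Throughout I would use the facts, already recorded after Definition \ref{def:Lambdas}, that each $\lambda^R_f$ is well defined, i.e.\ $f \und r \in R$ for every $f \in F$ and $r \in R$, and that, $F$ being a filter, $F$ is closed under $\und$ and upward closed.

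For the composition-tomonoid axioms and the two purely algebraic conditions, everything reduces to the identity $\lambda^R_f \circ \lambda^R_g = \lambda^R_{f \und g}$, which holds because $g \und r \in R$ (so that $\lambda^R_f$ may be applied) and $f \und (g \und r) = (f \und g) \und r$; since $f \und g \in F$, this simultaneously gives closure under $\circ$, that is, axiom (ii). Axiom (iii) holds because $1 \in F$ yields $\lambda^R_1 = \id_R$, and axiom (i), totality of the pointwise order, follows from the totality of $\leq$ on $F$ together with monotonicity: if $f \leq g$ then $f \und r \leq g \und r$ for all $r \in R$, whence $\lambda^R_f \leq \lambda^R_g$. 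Commutativity of $\und$ turns the composition identity into $\lambda^R_f \circ \lambda^R_g = \lambda^R_g \circ \lambda^R_f$, giving (C1), and negativity gives $\lambda^R_f(r) = f \und r \leq r$, giving (C2).

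The main obstacle is the sup-behaviour, (C3) and (C4), because the suprema entering these conditions are computed inside $R$, whereas the quanticity of $\mathcal L$ is a statement about suprema in $\mathcal L$. The key lemma I would isolate is that, because $R$ is an interval (being an $F$-class), a non-empty family $(r_\iota)$ in $R$ has a supremum in $R$ if and only if its supremum $s$ in $\mathcal L$ lies in $R$, and in that case the two suprema coincide; this is immediate from the interval property, since any $\mathcal L$-upper bound squeezed between the $r_\iota$ and an $R$-upper bound must itself lie in $R$. With this in hand, (C3) follows: if $s = \bigvee_\iota r_\iota$ exists in $R$, then $s$ is also the $\mathcal L$-supremum, quanticity gives $f \und s = \bigvee_\iota (f \und r_\iota)$ in $\mathcal L$, and since $f \und s \in R$ and each $f \und r_\iota \in R$, this $\mathcal L$-supremum is again the supremum in $R$, which is exactly $\lambda^R_f(s) = \bigvee_\iota \lambda^R_f(r_\iota)$. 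For (C4), given a non-empty family $(f_\iota)$ in $F$, I would set $g = \bigvee_\iota f_\iota$, which lies in $F$ because $F$ is upward closed and $g \geq f_{\iota_0}$; quanticity then gives, pointwise, $\bigvee_\iota \lambda^R_{f_\iota}(r) = \bigvee_\iota (f_\iota \und r) = g \und r = \lambda^R_g(r)$ for each $r \in R$, where every supremum may be read in $R$ because $g \und r \in R$. Hence the pointwise supremum of the $\lambda^R_{f_\iota}$ exists, equals $\lambda^R_g$, and lies in $\Lambda^R$.

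Finally, $\varrho_R$ is surjective by the very definition of $\Lambda^R$; it is a tomonoid homomorphism because $\varrho_R(f \und g) = \lambda^R_{f \und g} = \lambda^R_f \circ \lambda^R_g$, $\varrho_R(1) = \id_R$, and $\varrho_R$ is order-preserving by the monotonicity noted above. Sup-preservation of $\varrho_R$ is then just a restatement of the computation made for (C4): if $g = \bigvee_\iota f_\iota$ exists in $F$, then $\varrho_R(g) = \lambda^R_g$ is the pointwise supremum, hence the $\Lambda^R$-supremum, of the $\varrho_R(f_\iota) = \lambda^R_{f_\iota}$. This completes the plan, the delicate point throughout being the passage between suprema in $R$ and suprema in $\mathcal L$.
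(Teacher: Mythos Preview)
Your proposal is correct. The paper itself does not prove this lemma but cites \cite[Lemma 4.6]{Vet2}; your argument is the natural direct verification, essentially the proof of Proposition~\ref{prop:tomonoid-composition-tomonoid} relativised to an $F$-class $R$, with the one extra ingredient---the interval property of $R$---used exactly where it is needed, namely to transfer suprema between $R$ and $\mathcal L$ in the verification of (C3), (C4), and the sup-preservation of $\varrho_R$.
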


\begin{proof}
See \cite[Lemma 4.6]{Vet2}.
\end{proof}

Note that, with reference to Lemma \ref{lem:coextension-Dreiecke}, $F$ itself is an $F$-class, in fact $\Lambda^F$ is the Cayley tomonoid of the q.n.c.\ tomonoid $F$ and $\rho_F$ is the isomorphism (\ref{fml:Cayley}) between $F$ and $\Lambda^F$.

Lemma \ref{lem:coextension-Dreiecke} deals with the translations $\lambda_f$ such that $f \in F$. These translations are evidently uniquely determined by the composition tomonoids $\Lambda^R$ together with the homomorphisms $\rho_R$, where $R$ varies over the $F$-classes.

The next Lemma \ref{lem:coextension-Quadrate} deals with the remaining translations, that is, the mappings $\lambda_t$ such that $t \notin F$. They can be piecewise described in a similar fashion. Here, we write $\constant{A}{b}$ for the function that maps all values of a set $A$ to the single value $b$.

\begin{lemma} \label{lem:coextension-Quadrate}
Let $({\mathcal L}; \leq, \und, 1)$ be a non-trivial q.n.c.\ tomonoid and let $F$ be a non-trivial filter of $\mathcal L$. Let $\mathcal P$ be the quotient of $\mathcal L$ by $F$, let $R, T \in {\mathcal P}$ be such that $T < F$, and let $S = R \und T$.
\AufzAnfang
\Nummer{i} Let the pair $R, T \in {\mathcal P}$ be $\und$-maximal. Then $S < R$. Let $b = \inf R$, $\;d = \sup R$, $\;p = \inf S$, and $q = \sup S$. If $b = d$, then $R = \{b\}$, $p \in S$, and $\lambda_t^{R,S}(b) = p$ for all $t \in T$. If $p = q$, then $S = \{p\}$ and $\lambda_t^{R,S} = \constant{R}{p}$ for all $t \in T$.

Assume now $b < d$ and $p < q$. Then $b \in R$ implies $p \in S$. Moreover, $\Lambda^{R,S} = \{ \lambda_t^{R,S} \colon t \in T \}$ is a set of mappings from $R$ to $S$ with the following properties:
\AxiomeAnfang
\Axiom{a} For any $t \in T$, $\lambda_t^{R,S}$ is sup-preserving. If $b \in R$, $\;\lambda^{R,S}_t(b) = p$; if $b \notin R$, $\;\bigwedge_{r \in R} \lambda^{R,S}_t(r) = p$.

\Axiom{b} Under the pointwise order, $\Lambda^{R,S}$ is totally ordered. Moreover, the pointwise calculated supremum of any non-empty set $K \subseteq \Lambda^{R,S}$ is in $\Lambda^{R,S}$, provided that $\bigvee_{\lambda \in K} \lambda(r) \in S$ for all $r \in R$. 

\Axiom{c} If $p \in S$ and $d \in R$, $\;\Lambda^{R,S}$ has the bottom element $\constant{R}{p}$. If $p \in S$ and $d \notin R$, then either $\Lambda^{R,S} = \{ \constant{R}{p} \}$ or $\constant{R}{p} \notin \Lambda^{R,S}$. If $d \notin R$ and $q \in S$, then $p \in S$ and $\Lambda^{R,S} = \{ \constant{R}{p} \}$.
\AxiomeEnde
Moreover, for any $f \in F$ and $t \in T$, we have
\begin{equation} \label{fml:commuting-action}
\lambda^S_f \circ \lambda^{R,S}_t \;=\; \lambda^{R,S}_t \circ \lambda^R_f.
\end{equation}
\Nummer{ii} Let the pair $R, T \in {\mathcal P}$ not be $\und$-maximal. Then $S$ contains a smallest element $p$, and $\lambda^{R,S}_t = \constant{R}{p}$ for all $t \in T$.
\AufzEnde
\end{lemma}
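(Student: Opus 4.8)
The plan is to reduce everything to the left translations $\lambda_a$ of $\mathcal L$ and their restrictions, using Proposition \ref{prop:tomonoid-composition-tomonoid}: each $\lambda_a$ is order-preserving, shrinking and sup-preserving, pointwise suprema of translations are again translations, and $a \mapsto \lambda_a$ is an order embedding. The workhorse is the following ``filter fixed-point'' principle. For any $x$ and any $f \in F$ the product $x \und f$ lies in the same $F$-class as $x$; hence if $x \und f = x$ for all $f \in F$, then $x$ is the least element of its class, since by Definition \ref{def:filter} any strictly smaller element of that class would be reached from $x$ by some $f \in F$. With this in hand, $S < R$ follows from negativity, $S = R \und T \leq R \und F = R$ in $\mathcal P$ where $F$ is the identity class, together with $\und$-maximality: were $S = R$, then $F$ would satisfy $R \und F = S$ with $F > T$, contradicting that $T$ is the largest $Y$ with $R \und Y = S$. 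The tomonoid structure of $\Lambda^R$ and the sup-preservation in (a), the total order and sup-closure in (b), are then inherited: sup-preservation of $\lambda_t^{R,S}$ from (C3), pointwise comparability of the restrictions from the fact that $\mathcal L$ is a chain, and the sup-closure from (C4) for $\Lambda$. The commuting identity (\ref{fml:commuting-action}) is immediate from associativity and commutativity, $f \und (t \und r) = t \und (f \und r)$, the restrictions being well defined because $R$ is closed under multiplication by $F$ and $R \und T = S$.

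The corner statements of (a) I would settle by the fixed-point principle. When $b = \inf R \in R$, the element $b = \min R$ is $F$-fixed, since $b \und f \equ_F b$ and $b \und f \leq b = \min R$ force $b \und f = b$; hence each value $v = b \und t$ is also $F$-fixed, as $v \und f = (b \und f) \und t = v$, so $v = \min S$. This gives $\lambda_t^{R,S}(b) = p$ for all $t$ and $p \in S$. When $b \notin R$, the same trick shows that no $s \in S$ can lie strictly below every $r \und t$ (applying a suitable $f$ to such a product would produce a smaller element of $R$ with even smaller product), whence $\bigwedge_{r \in R} \lambda_t^{R,S}(r) = p$. The degenerate case $p = q$ is trivial, as every map lands in the singleton $S = \{p\}$; and $b = d$ reduces to the same computation with one-point domain $R = \{b\}$, where $b \und f = b$ yields both the constancy of $t \mapsto b \und t$ and its equality with $\min S$.

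For part (ii) I would first invoke the fact that every pair is dominated by a $\und$-maximal pair with the same product: choose $\bar R \geq R$, $\bar T \geq T$ with $\bar R \und \bar T = S$ and $(\bar R, \bar T)$ maximal. Non-maximality of $(R,T)$ gives $\bar R > R$ or $\bar T > T$, say the former, and then $\bar R \und T = S$ as well. Fixing $r \in R$, $t \in T$ and using $r \und t \equ_F \tilde r \und t$ in $S$ for $\tilde r \in \bar R$, Definition \ref{def:filter} supplies $f \in F$ with $(\tilde r \und f) \und t \leq r \und t$ while $\tilde r \und f$ still lies above all of $R$; monotonicity then squeezes $r'' \und t \leq r \und t$ for every $r'' \in R$, so $\lambda_t^{R,S}$ is constant. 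Its value is $F$-fixed by the computation of the previous paragraph, hence equals $\min S$; thus $\min S$ exists and $\lambda_t^{R,S} = \constant{R}{p}$.

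I expect the main obstacle to be the three membership conditions of (c), which decide when the constant map $\constant{R}{p}$ belongs to $\Lambda^{R,S}$ and whether it is the bottom element. These hinge on whether the boundaries $b, d, p, q$ are attained, and on realising the pointwise infimum $\bigwedge_{t \in T} \lambda_t^{R,S}$ as a genuine element of $\Lambda^{R,S}$ via (C4); the interaction of the open/closed alternatives for these boundaries with $\und$-maximality is what produces the several subcases. Sorting these carefully, rather than any single hard idea, is where the real effort lies.
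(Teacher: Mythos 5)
First, a remark on the comparison itself: the paper does not prove this lemma at all --- its ``proof'' is the single line ``See [Vet2, Lemma 4.7]'', so there is no in-paper argument to measure you against. On the merits, your sketch handles a substantial part of the statement correctly. The ``filter fixed-point'' principle (if $x \und f = x$ for all $f \in F$ then $x = \min\equcl{x}_{\equ_F}$) is exactly the right tool; with it your derivations of $S < R$, of ``$b \in R$ implies $p \in S$'', of $\lambda^{R,S}_t(b)=p$, of $\bigwedge_r \lambda^{R,S}_t(r)=p$ when $b \notin R$, of the degenerate cases, of (\ref{fml:commuting-action}), and of part (ii) all check out. (In (ii), your ``say the former'' hides that the case $\bar R = R$, $\bar T > T$ is not literally symmetric --- there the squeeze gives constancy in $t$ rather than in $r$ --- but the same $F$-fixedness computation $v \und f = r \und (t \und f) = v$ still forces every value to equal $\min S$, so that case closes in the same way.)

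There are, however, two genuine gaps. The larger one is that you do not prove (c) at all; you defer it as ``the main obstacle'', and your stated plan for it --- realising the pointwise infimum $\bigwedge_{t} \lambda^{R,S}_t$ as an element of $\Lambda^{R,S}$ via (C4) --- is misdirected, since (C4) concerns suprema, and the three clauses of (c) are not boundary bookkeeping but substantive claims: the first requires actually producing some $t \in T$ with $t \und d = p$ (take any $t_1 \in T$ and use the congruence to find $f \in F$ with $t_1 \und d \und f \leq p$, noting $t_1 \und f \in T$), while the second and third are exclusion results (a constant and a non-constant map cannot coexist when $d \notin R$) that need their own argument. The smaller gap is in the sup-closure half of (b): appealing to (C4) for $\Lambda$ only tells you that $\bigvee_\iota \lambda_{t_\iota} = \lambda_t$ with $t = \bigvee_\iota t_\iota$, but it does not tell you that $\lambda_t^{R,S} \in \Lambda^{R,S}$, because $t$ may fail to lie in $T$ (namely when $t = \sup T \notin T$). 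This is where the proviso ``$\bigvee_{\lambda \in K}\lambda(r) \in S$ for all $r$'' must be combined with $\und$-maximality: if $t$ lay in a class $T' > T$, the proviso would give $R \und T' = S$, contradicting that $T$ is the largest class $Y$ with $R \und Y = S$. Without this step the claim as you justify it would not be established.
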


\begin{proof}
See \cite[Lemma 4.7]{Vet2}.
\end{proof}

\section{Archimedean coextensions}
\label{sec:Archimedean-coextensions}

This section is devoted to Archimedean real coextensions. We follow up on our discussion in \cite[Section 6]{Vet2}. We will specify a coextension by describing its Cayley tomonoid ``section-wise'', that is, on the basis of the constituents with which Lemmas \ref{lem:coextension-Dreiecke} and \ref{lem:coextension-Quadrate} deal. We will present explicit formulas for all these constituents.

We will first shortly review the theory, ensuring that a reader without background information can follow the procedure. For further details and unproved facts, we refer to \cite{Vet2}. We carry on determining the different pieces of the newly constructed Cayley tomonoid.

Throughout the section, $\mathcal L$ is a non-trivial q.n.c.\ tomonoid, $F$ is a non-trivial Archimedean filter of $\mathcal L$ such that each $F$-class is order-isomorphic to a real interval, and $\mathcal P$ is the quotient of $\mathcal L$ by $F$. Thus $\mathcal L$ is an Archimedean real coextension of $\mathcal P$ by $F$ and our aim is to describe $\mathcal L$ given $\mathcal P$ and $F$.

We start characterising the extending filter $F$. As we have mentioned already in the introduction, there are, up to isomorphism, only two possibilities. In what follows, the minimum and maximum operation for pairs of real numbers will be denoted by $\wedge$ and $\vee$, respectively.

\begin{definition} \label{def:extending-filter}
Let $\star$ be the \L ukasiewicz t-norm, that is,
\[ \star \colon [0,1]^2 \to [0,1] \komma (a,b) \mapsto (a+b-1) \vee 0; \]
then we call a tomonoid isomorphic to $([0,1]; \leq, \star, 1)$ {\it \L ukasiewicz}.

Furthermore, let $\cdot \colon (0,1]^2 \to (0,1]$ be the usual product of reals; then we call any tomonoid isomorphic to $((0,1]; \leq, \cdot, 1)$ {\it product}.
\end{definition}

\begin{theorem} \label{thm:extending-filter}
If $F$ has a smallest element, $F$ is a \L ukasiewicz tomonoid. If $F$ does not have a smallest element, $F$ is a product tomonoid.
\end{theorem}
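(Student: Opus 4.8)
The plan is to recognise $F$ as an Archimedean, negative, commutative, left-continuous tomonoid whose chain is a real interval, to equip it with an additive generator, and to read off the dichotomy from whether that generator is bounded. First I would transport the operation along the order isomorphism of $F$ onto a real interval. Since $1$ is simultaneously the top and the neutral element and $F$ is negative and commutative, Lemma~\ref{lem:t-norms-tomonoids} lets me treat $\und$ as a left-continuous t-norm on a standard interval (the quanticity of $F$ supplying left-continuity), after adjoining a bottom zero if none is present. The two cases of the theorem I would pin to the idempotents: if $e\in F$ satisfies $e\und e=e$ and some element lies strictly below $e$, then $e^n=e$ for all $n$ contradicts the Archimedean property applied to a pair $a<e<1$; hence the only idempotents of $F$ are $1$ and, possibly, a least element. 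By Lemma~\ref{lem:filters-tnorm}, $F$ has a least element $d$ exactly when $F=[d,1]$ with $d$ idempotent, and then negativity forces $d\und x=d$ for all $x\in F$, so $d$ is an annihilator; this is the ``nilpotent'' situation. When $F$ has no least element, $1$ is the sole idempotent, the ``strict'' situation.

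Next I would build the generator by a H\"older-type argument. Fixing an interior element $c<1$, the Archimedean property makes the powers $a^m$ and $c^n$ comparable, so that for each $a<1$ the way in which powers of $a$ dominate powers of $c$ determines a well-defined real value $h(a)$, normalised by $h(1)=0$ and a fixed negative $h(c)$. Commutativity, negativity, and the Archimedean property then make $h$ an order-preserving homomorphism from $F$ into $(\Reals^-,+)$, additive up to the truncation imposed by a possible annihilator. Injectivity of $h$ is the crux: it amounts to cancellativity of $\und$ (in the nilpotent case, cancellativity away from $d$, the collapse below the truncation value being exactly $d$). Here I would exploit residuation, replacing any $a$ by the $\und$-maximal partner $c\impl(a\und c)$, which reproduces $a\und c$ by the sup-preservation of $\lambda_c$; iterating with the Archimedean property is designed to exclude a strictly larger partner, and hence to exclude a nondegenerate interval collapsing under some $\lambda_c$.

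Finally, quanticity enters decisively to make $h$ surjective. Because the chain of $F$ is a complete real interval and $\und$ preserves all suprema, the image of $h$ cannot be a dense proper subgroup of $\Reals$ but must be a full subinterval of $\Reals^-$ with top $0$. In the strict case this yields an isomorphism of $F$ onto $(\Reals^-,\leq,+,0)$, which is the product tomonoid via $x\mapsto e^x$; in the nilpotent case the annihilator $d$ truncates the addition at a finite floor, yielding an isomorphism onto a bounded truncated-addition interval, which is the \L ukasiewicz tomonoid. This establishes both alternatives of the statement.

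The main obstacle is that the hypotheses grant only left-continuity, not the two-sided continuity on which the classical Mostert--Shields/Acz\'el generator theorem rests. I expect the genuine work to lie in two interacting places. First, deriving cancellativity (respectively cancellativity modulo the annihilator) from negativity, the Archimedean property and residuation alone is delicate precisely because the natural argument passes to a decreasing sequence $c^k$ and hence to an \emph{infimum}, which quanticity --- a statement about suprema --- does not automatically preserve. Second, once the H\"older homomorphism $h$ is in hand, I must use quanticity, namely the existence and $\und$-preservation of all suprema together with the real-interval structure of the chain, to upgrade the embedding into $\Reals$ to a surjection onto a full interval, thereby substituting for the continuity assumption of the classical theorem and ruling out dense proper subgroups as the image.
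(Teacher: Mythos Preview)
The paper does not actually prove this statement; it defers entirely to \cite[Section~2]{KMP}, where the classical representation of continuous Archimedean t-norms via additive generators is developed. Your plan is precisely a reconstruction of that argument: rule out interior idempotents, build a H\"older-type homomorphism into $(\Reals^-,+)$, and read off the strict/nilpotent dichotomy from whether the generator is unbounded or truncated. In that sense your route coincides with what lies behind the paper's citation.

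Where your proposal adds value is in flagging that the present hypotheses deliver only left-continuity (quanticity), whereas the textbook theorem in \cite{KMP} is stated for \emph{continuous} t-norms. The two obstacles you isolate --- conditional cancellativity and surjectivity of the H\"older map --- are exactly where this gap must be bridged, and your diagnosis that the difficulty stems from Archimedean iteration driving powers \emph{downwards} (infima) while quanticity controls only suprema is on the mark. Your residuation-and-iteration sketch for cancellativity is the right instinct, but as written it does not obviously close: replacing $a$ by $c \impl (a \und c)$ enlarges the collapsing interval to its maximal form, yet iterating with the Archimedean property still leaves you chasing a decreasing sequence whose limit quanticity does not govern. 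A cleaner organisation is to first establish the auxiliary fact that a left-continuous Archimedean t-norm is automatically continuous: once conditional cancellativity is secured, each $\lambda_c$ is strictly increasing above its zero level, hence a left-continuous injection between real intervals, hence continuous; monotone separate continuity then yields joint continuity, and one is back in the classical setting of \cite{KMP} verbatim. This auxiliary step is itself known in the t-norm literature and is presumably what the paper's bare citation is leaning on, though the paper does not make this explicit.
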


\begin{proof}
See, e.g., \cite[Section 2]{KMP}.
\end{proof}

We note that the extending filter can be represented in a way different from Definition \ref{def:extending-filter}. By choosing a suitable base set, the monoidal operation becomes the usual addition of reals.

\begin{lemma} \label{lem:extending-filter-R}
$([-1,0]; \leq, \truncplus, 0)$, where $\truncplus \colon [-1,0]^2 \to [-1,0] \komma (a,b) \mapsto (a+b) \vee -1$, is a \L ukasiewicz tomonoid.

$(\Reals^-; \leq, +, 0)$, where $+$ is the usual addition of reals, is a product tomonoid.
\end{lemma}

\begin{proof}
The isomorphisms are $[0,1] \to [-1,0] \colon a \mapsto a-1$ and $(0,1] \to \Reals^- \colon a \mapsto \ln a$, respectively.
\end{proof}

For the composition tomonoids associated with congruence classes, there are, up to c-isomorphism, only four possibilities.

\begin{definition} \label{def:real-composition-tomonoids}
\AufzAnfang
\Nummer{i} Let $\Phi^\text{\L u}$ consist of the functions $\lambda_t \colon [0,1] \to [0,1] \komma x \mapsto (x+t-1) \vee 0$ for each $t \in [0,1]$. A composition tomonoid c-isomorphic to $(\Phi^\text{\L u}; \leq,$ $\circ, \id_{[0,1]})$ is called {\it \L ukasiewicz}.
\Nummer{ii} Let $\Phi^\text{Pr}$ consist of the functions $\lambda_t \colon (0,1] \to (0,1] \komma x \mapsto t \cdot x$ for each $t \in (0,1]$. A composition tomonoid c-isomorphic to $(\Phi^\text{Pr}; \leq, \circ, \id_{[0,1]})$ is called {\it product}.
\Nummer{iii} Let $\Phi^\text{rP}$ consist of the functions $\lambda_t \colon [0,1) \to [0,1) \komma x \mapsto \frac{(x+t-1) \vee 0}{t}$ for each $t \in (0,1]$. A composition tomonoid c-isomorphic to $(\Phi^\text{rP}; \leq, \circ, \id_{[0,1]})$ is called {\it reversed product}.
\Nummer{iv} Let $\Phi^\text{Po}$ consist of the functions $\lambda_t \colon (0,1) \to (0,1) \komma x \mapsto x^{\frac 1 t}$ for each $t \in (0,1]$. A composition tomonoid c-isomorphic to $(\Phi^\text{Po}; \leq, \circ, \id_{[0,1]})$ is called {\it power}.
\AufzEnde
\end{definition}

The following theorem is the main result of \cite{Vet1}; cf.\ also \cite{Vet2}.

\begin{theorem} \label{thm:Dreiecke}
Let $R \in {\mathcal P}$ and assume that $R$ is not a singleton. If $R$ has a smallest and a largest element, $\Lambda^R$ is \L ukasiewicz. If $R$ has a largest but no smallest element, $\Lambda^R$ is product. If $R$ has a smallest but no largest element, $\Lambda^R$ is reversed product. If $R$ has no smallest and no largest element, $\Lambda^R$ is power.
\end{theorem}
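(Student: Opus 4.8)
The plan is to reduce Theorem~\ref{thm:Dreiecke} to the structure of Archimedean q.n.c.\ tomonoids by exploiting Lemma~\ref{lem:coextension-Dreiecke}, which already tells us that $(\Lambda^R; \leq, \circ, \id_R)$ is a q.n.c.\ tomonoid and that $\varrho_R \colon F \to \Lambda^R$ is a sup-preserving surjective homomorphism. First I would observe that since $F$ is Archimedean, its homomorphic image $\Lambda^R$ inherits the Archimedean property; being also q.n.c., $\Lambda^R$ is therefore an Archimedean q.n.c.\ tomonoid. By Theorem~\ref{thm:extending-filter} (applied now to $\Lambda^R$ in place of $F$), such a tomonoid is, up to isomorphism, either \L{}ukasiewicz or product, depending on whether it possesses a smallest element. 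The task is then to upgrade this abstract isomorphism to a \emph{c}-isomorphism that respects the concrete chain $R$ on which $\Lambda^R$ acts, and to match the four cases to the four geometric shapes of $R$.

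**Identifying the chain $R$ with a standard real interval.**
Next I would use the hypothesis that $R$ is order-isomorphic to a real interval, together with the four boundary cases. The key point is that the action of $\Lambda^R$ on $R$ is determined up to \emph{c}-isomorphism by an order isomorphism $\phi$ carrying $R$ onto one of the standard intervals $[0,1]$, $(0,1]$, $[0,1)$, $(0,1)$. I would argue that the Archimedean action of a \L{}ukasiewicz or product tomonoid on its chain forces that chain to be (c-isomorphic to) the underlying interval of the defining representation in Definition~\ref{def:real-composition-tomonoids}. Concretely, a shrinking, sup-preserving, commuting family of maps acting Archimedeanly on a real interval, with pointwise suprema again in the family, admits a canonical rescaling of the chain: if $\Lambda^R$ has a least element (the \L{}ukasiewicz case), this least element is a translation mapping $R$ onto a single point, which pins down the behaviour at the boundaries; if it has no least element (the product case), the infimum of the translations is the constant map to the lower endpoint, and one rescales by a logarithm as in Lemma~\ref{lem:extending-filter-R}.

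**Matching endpoints to the four cases.**
I would then run the four-way case analysis governed by which endpoints of $R$ belong to $R$. The lower endpoint of $R$ being attained corresponds, under the standard normalisation, to $0 \in R$, i.e.\ to the defining interval being left-closed; the upper endpoint being attained corresponds to $1 \in R$ (the identity $\id_R$ fixes the top), i.e.\ right-closed. Thus: both endpoints present gives $[0,1]$ and the \L{}ukasiewicz template $x \mapsto (x+t-1)\vee 0$; top but no bottom gives $(0,1]$ and the product template $x \mapsto t\cdot x$; bottom but no top gives $[0,1)$ and the reversed-product template; neither endpoint gives $(0,1)$ and the power template $x \mapsto x^{1/t}$. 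In each case I would verify that the concrete family from Definition~\ref{def:real-composition-tomonoids} realises the abstract \L{}ukasiewicz or product structure with the correct endpoint behaviour, and that the normalising order isomorphism furnishes the required \emph{c}-isomorphism.

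**The main obstacle.**
The hard part will be the rigidity argument that turns the tomonoid isomorphism from Theorem~\ref{thm:extending-filter} into a \emph{c}-isomorphism, i.e.\ proving that the \emph{geometry} of the action is uniquely pinned down and not merely the abstract monoid structure. As the excerpt itself stresses, an isomorphism of composition tomonoids need not be a \emph{c}-isomorphism, so one must genuinely use that $R$ is a real interval and that the maps are sup-preserving and shrinking to reconstruct the chain coordinate from the order structure of $\Lambda^R$ and its action. This is precisely the content of the cited result \cite{Vet1}, and in a full proof I would lean on that classification; the remaining work is the bookkeeping that assigns each of the four endpoint configurations to the correct one of the four named composition tomonoids.
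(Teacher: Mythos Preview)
The paper does not actually prove this theorem: it simply records that Theorem~\ref{thm:Dreiecke} is the main result of \cite{Vet1} (with a cross-reference to \cite{Vet2}) and moves on. Your outline therefore goes considerably further than the paper itself does, and the strategy you sketch --- pushing the Archimedean property through the surjection $\varrho_R$, invoking the classification of Archimedean q.n.c.\ tomonoids, and then upgrading the abstract tomonoid isomorphism to a $c$-isomorphism by matching the endpoint configuration of $R$ --- is the natural one and is indeed what underlies the cited result. You are also right to isolate the rigidity step (tomonoid isomorphism $\Rightarrow$ $c$-isomorphism) as the genuine content: as abstract tomonoids the product, reversed product, and power composition tomonoids are all isomorphic to $((0,1];\leq,\cdot,1)$, so the four-way split really does come from the geometry of the action on $R$, and this is precisely where the work in \cite{Vet1} sits.

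One small caveat on your reduction: Theorem~\ref{thm:extending-filter} as stated in the paper is formulated for $F$, which is known a priori to be a real interval because it is itself an $F$-class. To invoke the same dichotomy for $\Lambda^R$ you would first need to check that $\Lambda^R$ is order-isomorphic to a real interval (or else appeal directly to the underlying classification in \cite{KMP}). This is not hard --- $\varrho_R$ is a sup-preserving surjection from a real interval onto $\Lambda^R$, and since $R$ is not a singleton one sees that $\Lambda^R$ is non-trivial --- but it is a step that your sketch passes over. Beyond that, your proposal and the paper agree: both ultimately defer the substance to \cite{Vet1}.
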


The canonical versions of the four composition tomonoids mentioned in Theorem \ref{thm:Dreiecke} are schematically depicted in Figure \ref{fig:Dreiecksalgebren}.

\begin{figure}[ht!]
\begin{center}
\includegraphics[width=0.98\textwidth]{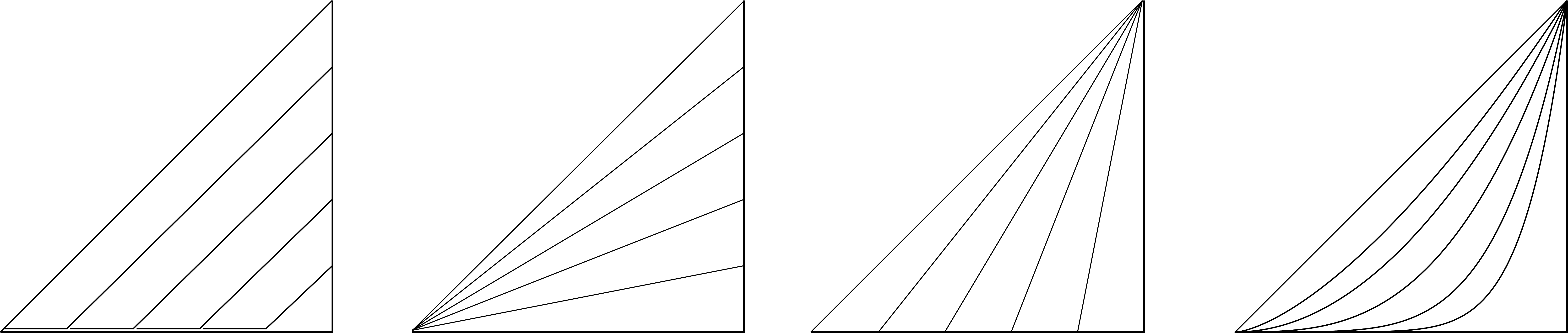}
\caption{The \L ukasiewicz, product, reversed product, and power composition tomonoid, where the base set is, in accordance with Definition \ref{def:real-composition-tomonoids}, the closed, left-open, right-open, open real unit interval, respectively.}
\label{fig:Dreiecksalgebren}
\end{center}
\end{figure}

Again, it turns out that these four composition tomonoids can be represented in a way such that the mappings are of a particularly simple form. To this end, we replace the intervals $[0,1]$, $[0,1)$, $(0,1]$, and $(0,1)$ with $[-1,0]$, $\Reals^+$, $\Reals^-$, and $\Reals$, respectively.

\begin{lemma} \label{lem:composition-tomonoid-R}
The composition tomonoid $(\Lambda^{[-1,0]}; \circ, \leq, \id)$, where $\Lambda^{[-1,0]} = \{ \tau^{[-1,0]}_t \colon$ $t \in [-1,0] \}$ and
\[ \tau^{[-1,0]}_t \colon [-1,0] \to [-1,0] \komma x \mapsto (x+t) \vee -1 \quad\text{for } t \in [-1,0], \]
is \L ukasiewicz. The composition tomonoid $(\Lambda^{\Reals^-}; \circ, \leq, \id)$, where $\Lambda^{\Reals^-} = \{ \tau^{\Reals^-}_t \colon t \in \Reals^- \}$ and
\[ \tau^{\Reals^-}_t \colon \Reals^- \to \Reals^- \komma x \mapsto x+t \quad\text{for } t \in \Reals^-, \]
is product. The composition tomonoid $(\Lambda^{\Reals^+}; \circ, \leq, \id)$, where $\Lambda^{\Reals^+} = \{ \tau^{\Reals^+}_t \colon t \in \Reals^- \}$ and
\[ \tau^{\Reals^+}_t \colon \Reals^+ \to \Reals^+ \komma x \mapsto (x+t) \vee 0 \quad\text{for } t \in \Reals^-, \]
is reversed product. The composition tomonoid $(\Lambda^{\Reals}; \circ, \leq, \id)$, where $\Lambda^{\Reals} = \{ \tau^{\Reals}_t \colon t \in \Reals^- \}$ and
\[ \tau^{\Reals}_t \colon \Reals \to \Reals \komma x \mapsto x+t \quad\text{for } t \in \Reals^-, \]
is power.
\end{lemma}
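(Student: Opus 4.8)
The plan is to handle each of the four cases by exhibiting an explicit order isomorphism between the canonical base chain of Definition \ref{def:real-composition-tomonoids} and the corresponding base set named in the lemma, and then to check that conjugation by this isomorphism carries the canonical family of mappings onto the family given here. By the definition of c-isomorphism this is exactly what is required: if $\phi\colon R \to S$ is an order isomorphism and $\Phi$ is the canonical family on $R$, then $\{\phi\circ\lambda\circ\phi^{-1}\colon\lambda\in\Phi\}$ is c-isomorphic to $\Phi$, so it suffices to verify that this conjugated family coincides, as a set of maps together with its parametrisation, with the family on $S$ listed in the lemma. Throughout, the parameter $t$ of the canonical generator will be transformed into a new parameter $s$, and I will keep track of the fact that $s$ sweeps out the correct index set as $t$ runs through its range.

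For the two ``additive but bounded'' cases the isomorphisms are already at hand from Lemma \ref{lem:extending-filter-R}. In the \L ukasiewicz case I would take the shift $\phi\colon[0,1]\to[-1,0]\komma a\mapsto a-1$, with inverse $x\mapsto x+1$; writing the canonical generator as $\lambda_t(x)=(x+t-1)\vee 0$ and setting $s=t-1\in[-1,0]$, a one-line computation gives $\phi\circ\lambda_t\circ\phi^{-1}\colon x\mapsto(x+s)\vee -1=\tau^{[-1,0]}_s(x)$, and as $t$ ranges over $[0,1]$ the value $s$ ranges over $[-1,0]$. In the product case I would take $\phi\colon(0,1]\to\Reals^-\komma a\mapsto\ln a$, with inverse $x\mapsto e^x$; with $\lambda_t(x)=t\cdot x$ and $s=\ln t$ one obtains $\phi\circ\lambda_t\circ\phi^{-1}\colon x\mapsto x+s=\tau^{\Reals^-}_s(x)$, where $s$ ranges over $\Reals^-$.

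The reversed product and power cases are where the actual work lies, since neither the base set nor the conjugating map is linear and the truncation must be tracked by hand. For the reversed product I would use $\phi\colon[0,1)\to\Reals^+\komma a\mapsto-\ln(1-a)$, with inverse $x\mapsto 1-e^{-x}$; substituting into $\lambda_t(x)=\frac{(x+t-1)\vee 0}{t}$ and splitting according to the sign of $t-e^{-x}$ yields $\phi\circ\lambda_t\circ\phi^{-1}\colon x\mapsto(x+\ln t)\vee 0=\tau^{\Reals^+}_{\ln t}(x)$, the two branches of the truncation matching up exactly at $x+\ln t=0$. For the power case I would use the iterated logarithm $\phi\colon(0,1)\to\Reals\komma a\mapsto-\ln(-\ln a)$, with inverse $x\mapsto\exp(-e^{-x})$; a direct substitution into $\lambda_t(x)=x^{1/t}$ then gives $\phi\circ\lambda_t\circ\phi^{-1}\colon x\mapsto x+\ln t=\tau^{\Reals}_{\ln t}(x)$. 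In both cases $\ln t$ sweeps out $\Reals^-$ as $t$ runs through $(0,1]$, matching the index sets of the target families.

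I expect the main obstacle to be precisely the bookkeeping in these last two cases. One must confirm that the chosen maps $\phi$ really are order isomorphisms of the stated chains; in particular, the naive double logarithm is orientation-reversing, so the sign in $-\ln(-\ln a)$ is essential both for $\phi$ to be order-preserving and for the transformed parameter to land in $\Reals^-$ rather than $\Reals^+$. Likewise, the truncation in the reversed product case has to be resolved by an honest case distinction on the sign of $t-e^{-x}$ rather than by formal manipulation, and one should check that the boundary $x+\ln t=0$ is where the truncated and untruncated branches agree, so that the conjugate is genuinely the single map $\tau^{\Reals^+}_{\ln t}$. Once these points are verified, each of the four conjugated families is seen to be exactly the family of the lemma, and the c-isomorphisms are established.
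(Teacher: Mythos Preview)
Your proposal is correct and follows exactly the paper's approach: the paper's proof simply names the four order isomorphisms $a\mapsto a-1$, $a\mapsto\ln a$, $a\mapsto-\ln(1-a)$, and $a\mapsto-\ln(-\ln a)$ (the first two by reference to Lemma~\ref{lem:extending-filter-R}) and leaves the conjugation check to the reader, while you carry out that verification explicitly. Your computations are right, including the case split in the reversed product situation and the sign handling in the double logarithm.
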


\begin{proof}
In the first two cases, the isomorphisms are as indicated in the proof of Lemma \ref{lem:extending-filter-R}. In case of $\Lambda^{\Reals^+}$, we use $[0,1) \to \Reals^+ \colon a \mapsto -\ln(1-a)$. In
case of $\Lambda^{\Reals}$, we use $(0,1) \to \Reals \colon a \mapsto -\ln(-\ln a)$.
\end{proof}

We should add that we will use the alternative base sets $[-1,0]$, $\Reals^-$, $\Reals^+$, and $\Reals$, indicated in the Lemmas \ref{lem:extending-filter-R} and \ref{lem:composition-tomonoid-R}, only for the purpose of simplifying the subsequent proofs. When doing so we will say that we use ``auxiliary coordinates''. But the results concerning the tomonoids of Definition \ref{def:extending-filter} and the composition tomonoids of Definition \ref{def:real-composition-tomonoids} will be formulated with reference to the original base sets, which are real unit intervals with or without borders, respectively. In this latter case we will occasionally say that we use ``unit coordinates''.

Recall that $\mathcal P$ is the quotient of the q.n.c.\ tomonoid $\mathcal L$ by the non-trivial filter $F$. We shall now specify, for each $F$-class $R \in {\mathcal P}$, the surjective homomorphisms $\rho_R \colon F \to \Lambda^R$, see (\ref{fml:rho}) in Lemma \ref{lem:coextension-Dreiecke}.

\begin{theorem} \label{thm:filter-homomorphisms}
Let $R \in {\mathcal P}$ be such that $R$ is not a singleton.
\AufzAnfang
\Nummer{i} Assume that $F$ is a \L ukasiewicz tomonoid. Then $\Lambda^R$ is \L ukasiewicz. Furthermore, there is an $\alpha \geq 1$ such that, under the identification of $R$ with the real interval $[0,1]$, the homomorphism $\rho_R \colon F \to \Lambda^R$ is given by
\[ \rho_R(f) \colon [0,1] \to [0,1] \komma r \mapsto (r + \alpha(f-1)) \vee 0,
\quad\text{where } f \in F. \]
\Nummer{ii} Assume that $F$ is a product tomonoid and $\Lambda^R$ is \L ukasiewicz. Then there is an $\alpha > 0$ such that, under the identification of $R$ with $[0,1]$,
\[ \rho_R(f) \colon [0,1] \to [0,1] 
             \komma r \mapsto (r + \alpha \ln f) \vee 0,
\quad\text{where } f \in F. \]
\Nummer{iii} Assume that $F$ is a product tomonoid and $\Lambda^R$ is product. Then there is an $\alpha > 0$ such that, under the identification of $R$ with $(0,1]$,
\[ \rho_R(f) \colon (0,1] \to (0,1] 
             \komma r \mapsto f^\alpha r,
\quad\text{where } f \in F. \]
\Nummer{iv} Assume that $F$ is a product tomonoid and $\Lambda^R$ is reversed product. Then there is an $\alpha > 0$ such that, under the identification of $R$ with $[0,1)$,
\[ \rho_R(f) \colon [0,1) \to [0,1)
             \komma r \mapsto (1 - \tfrac{1-r}{f^\alpha}) \vee 0,
\quad\text{where } f \in F. \]
\Nummer{v} Assume that $F$ is a product tomonoid and $\Lambda^R$ is power. Then there is an $\alpha > 0$ such that, under the identification of $R$ with $(0,1)$,
\[ \rho_R(f) \colon (0,1) \to (0,1)
             \komma r \mapsto r^{\frac 1{f^\alpha}},
\quad\text{where } f \in F. \]
\AufzEnde
\end{theorem}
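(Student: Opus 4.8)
The plan is to reduce, in each of the five cases, the description of $\rho_R$ to the classification of the surjective sup-preserving homomorphisms between two additively written tomonoids, and then to translate the outcome back into unit coordinates. First I would invoke Theorem \ref{thm:extending-filter} and Theorem \ref{thm:Dreiecke} to fix the isomorphism types of $F$ and of $\Lambda^R$. The five cases are exactly the admissible combinations: by Lemma \ref{lem:filters-tnorm}(i) a filter with a smallest element (the \L ukasiewicz case) forces every $F$-class to be a closed interval, so $\Lambda^R$ can only be \L ukasiewicz, whereas a filter without smallest element (the product case) permits all four types of $\Lambda^R$. I would then pass to the auxiliary coordinates of Lemmas \ref{lem:extending-filter-R} and \ref{lem:composition-tomonoid-R}: there $F$ is either $([-1,0]; \truncplus, 0)$ or $(\Reals^-; +, 0)$, and, identifying each mapping with its parameter $t$, the tomonoid $\Lambda^R$ is $([-1,0]; \truncplus, 0)$ in the \L ukasiewicz case and $(\Reals^-; +, 0)$ in the product, reversed-product, and power cases (these three differ only in the base set on which the parameters act, not in the abstract structure of $\Lambda^R$). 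Under these identifications $\rho_R$ becomes, by Lemma \ref{lem:coextension-Dreiecke}, a surjective sup-preserving homomorphism $h$ from the coordinate version of $F$ onto that of $\Lambda^R$, with $h(0) = 0$.

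The core step is the claim that every such $h$ is a possibly truncated scaling $x \mapsto (\alpha x) \vee c$, where $c$ is the bottom of the codomain. On the part of the domain mapped strictly above the bottom no truncation is active, so the homomorphism property reads as Cauchy's additive equation $h(x+y) = h(x) + h(y)$; since a sup-preserving map is order-preserving and a monotone additive function on an interval of reals is linear, I obtain $h(x) = \alpha x$ there for a single $\alpha$. Where a bottom element is present, monotonicity shows that its preimage is a down-set, so $h$ has exactly the truncated form, and sup-preservation guarantees that the value at the truncation threshold is the truncated one. Surjectivity then fixes the range of $\alpha$: in all product-filter cases at least one side is unbounded and any $\alpha > 0$ is surjective, whereas in case (i) both $[-1,0]$'s are bounded and the image of $x \mapsto (\alpha x) \vee -1$ is $[(-\alpha) \vee -1, 0]$, so onto-ness forces $-\alpha \leq -1$, that is $\alpha \geq 1$.

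It then remains to transport $h(x) = (\alpha x) \vee c$ back to unit coordinates using the explicit isomorphisms recorded in the proofs of Lemmas \ref{lem:extending-filter-R} and \ref{lem:composition-tomonoid-R}: the maps $f \mapsto f-1$ or $f \mapsto \ln f$ on $F$, and $a \mapsto a-1$, $a \mapsto \ln a$, $a \mapsto -\ln(1-a)$, $a \mapsto -\ln(-\ln a)$ on $R$. This is a routine substitution. In case (i) it yields $r \mapsto (r + ((\alpha(f-1)) \vee -1)) \vee 0$, which simplifies to $r \mapsto (r + \alpha(f-1)) \vee 0$ because, as $r \leq 1$, the inner truncation is absorbed by the outer one. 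The analogous substitutions turn the additive scaling into $r \mapsto (r + \alpha \ln f) \vee 0$ in case (ii), into multiplication $r \mapsto f^\alpha r$ in case (iii), into $r \mapsto (1 - \tfrac{1-r}{f^\alpha}) \vee 0$ in case (iv), and into $r \mapsto r^{1/f^\alpha}$ in case (v).

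The main obstacle is the classification of the homomorphisms $h$: one must combine the additive (Cauchy) equation with order- and sup-preservation to force linearity, and—most delicately—read off both the correct truncation behaviour and the admissible range of $\alpha$ from surjectivity. The genuinely case-sensitive point is case (i), where the boundedness of domain and codomain alike produces the constraint $\alpha \geq 1$, in contrast to the strict positivity $\alpha > 0$ that suffices in all product-filter cases.
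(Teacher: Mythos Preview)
Your proposal is correct and follows essentially the same route as the paper: pass to auxiliary (additive) coordinates via Lemmas~\ref{lem:extending-filter-R} and~\ref{lem:composition-tomonoid-R}, classify the surjective sup-preserving homomorphisms as (possibly truncated) linear maps $x \mapsto (\alpha x) \vee c$, and convert back to unit coordinates. The only minor variation is in case~(i), where you exclude the non-\L ukasiewicz types for $\Lambda^R$ via Lemma~\ref{lem:filters-tnorm}(i) (a \L ukasiewicz filter has a least element, so all $F$-classes are closed), whereas the paper instead observes that there is no surjective homomorphism from $([-1,0];\truncplus,0)$ onto $(\Reals^-;+,0)$; both arguments are valid and equally short.
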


\begin{proof}
We argue with respect to the auxiliary coordinates. Note first that $\Lambda^{[-1,0]}$ is, as a tomonoid, isomorphic to $([-1,0]; \leq, \truncplus, 0)$, the isomorphism being $[-1,0] \to \Lambda^{[-1,0]} \komma t \mapsto \tau^{[-1,0]}_t$. Similarly, $\Lambda^{\Reals^-}$, $\Lambda^{\Reals^+}$, and $\Lambda^{\Reals}$ are (as tomonoids) isomorphic to $(\Reals^-; \leq, +, 0)$.

(i) In auxiliary coordinates, $F$ is $([-1,0]; \leq, \truncplus, 0)$. Assume that $\Lambda^R$ is \L ukasiewicz, that is, $\Lambda^{[-1,0]}$. We have to determine the surjective homomorphisms $\rho_R$ from $F$ to $\Lambda^R$. Let $\rho$ be a homomorphism from $([-1,0]; \leq, \truncplus, 0)$ to itself. Then either $\rho(r) = 0$ for all $r \in [-1,0]$, or there is an $\alpha \geq 1$ such that $\rho(r) = \alpha r \vee -1$, $\;r \in [-1,0]$. Only in the latter case, $\rho$ is surjective. We conclude
\[ \rho_R \colon [-1,0] \to \Lambda^{[-1,0]}
          \komma f \mapsto \tau^{[-1,0]}_{\alpha f \vee -1}, \]
where $\alpha \geq 1$. Converted to unit coordinates, the assertion follows.

Moreover, the only homomorphism from $([-1,0]; \leq, \truncplus, 0)$ to $(\Reals^-; \leq, +, 0)$ maps all elements to $0$. Hence there is no surjective homomorphism from $F$ to $\Lambda^R$ if $\Lambda^R$ is product, reversed product, or power. Part (i) follows.

(ii) In auxiliary coordinates, $F$ is $(\Reals^-; \leq, +, 0)$ and $\Lambda^R$ is $\Lambda^{[-1,0]}$. Let $\rho$ be a homomorphism from $(\Reals^-; \leq, +, 0)$ to $([-1,0]; \leq, \truncplus, 0)$. Then there is an $\alpha \geq 0$ such that $\rho(r) = \alpha r \vee -1$, $\;r \in \Reals^-$. Exactly in case $\alpha > 0$, $\rho$ is surjective. Hence, for some $\alpha > 0$, we have
\[ \rho_F \colon \Reals^- \to \Lambda^{[-1,0]}
          \komma \tau^{[-1,0]}_{\alpha f \vee -1} \]
and the assertion follows.

(iii)--(v) In all these cases, $\Lambda^R$ is, as a tomonoid, is isomorphic to $(\Reals^-; \leq, +, 0)$. Let $\rho$ be a surjective homomorphism from $(\Reals^-; \leq, +, 0)$ to itself. Then again, there is an $\alpha > 0$ such that $\rho(r) = \alpha r$, $\;r \in \Reals^-$. Hence, for some $\alpha > 0$, we have
\[ \rho_F \colon \Reals^- \to \Lambda^R
          \komma \tau^R_{\alpha f}, \]
where $R$ is $\Reals^-$, $\Reals^+$, or $\Reals$, respectively. The remaining assertions follow as well.
\end{proof}

Theorems \ref{thm:Dreiecke} and \ref{thm:filter-homomorphisms} describe the upper-most part of the Cayley tomonoid of the coextension $\mathcal L$: the translations by the elements of the filter $F$. Let us now turn to the remaining translations. As we have observed in \cite{Vet2}, for each pair of congruence classes $R$ and $S$, the sets $\Lambda^{R,S}$ are to a large extent determined by $\Lambda^R$ and $\Lambda^S$. Our aim is to explicitly determine $\Lambda^{R,S}$; to this end, we have to consider all 16 combinations of c-isomorphism types of $\Lambda^R$ and $\Lambda^S$. Not all combinations are possible, however, and in some further cases $\Lambda^{R,S}$ contains only a single constant mapping. We will see that nine non-trivial possibilities remain.

We first determine the five impossible cases.

\begin{proposition}
Let $R, T \in {\mathcal P}$, where $T < F$, be $\und$-maximal, let $S = R \und T$, and assume that neither $R$ nor $S$ is a singleton.
\AufzAnfang
\Nummer{i} If $\Lambda^R$ is \L ukasiewicz, then $\Lambda^S$ is \L ukasiewicz or reversed product.

\Nummer{ii} If $\Lambda^R$ is reversed product, then $\Lambda^S$ is \L ukasiewicz or reversed product.

\Nummer{iii} If $\Lambda^R$ is power, then $\Lambda^S$ is \L ukasiewicz, reversed product, or power.
\AufzEnde
\end{proposition}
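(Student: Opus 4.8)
The plan is to reduce everything to a single piece of data about each class: which of its two endpoints actually belongs to it. Writing $b = \inf R$, $d = \sup R$, $p = \inf S$, and $q = \sup S$ (all computed in $\mitNull{\mathcal L}$), Theorem \ref{thm:Dreiecke} together with the interval conventions of Definition \ref{def:real-composition-tomonoids} yields the dictionary: $\Lambda^R$ is \L ukasiewicz iff $b, d \in R$; product iff $d \in R$ but $b \notin R$; reversed product iff $b \in R$ but $d \notin R$; and power iff $b, d \notin R$. In particular, ``$\Lambda^S$ is \L ukasiewicz or reversed product'' is equivalent to ``$S$ has a smallest element'', i.e.\ $p \in S$, while ``$\Lambda^S$ is product'' is equivalent to ``$q \in S$ and $p \notin S$''. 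Since by hypothesis neither $R$ nor $S$ is a singleton, we are in the regime $b < d$ and $p < q$, so Lemma \ref{lem:coextension-Quadrate}(i) applies verbatim.

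For parts (i) and (ii), observe that in both cases the hypothesis on $\Lambda^R$ forces $R$ to have a smallest element, i.e.\ $b \in R$: the \L ukasiewicz case means $b, d \in R$ and the reversed product case means $b \in R$, $d \notin R$. Now Lemma \ref{lem:coextension-Quadrate}(i) states, under exactly the standing hypotheses ($R, T$ being $\und$-maximal with $T < F$, $\,S = R \und T$, both nonsingletons), the implication $b \in R \Rightarrow p \in S$. Hence $S$ has a smallest element, and by the dictionary $\Lambda^S$ is \L ukasiewicz or reversed product. This settles (i) and (ii) simultaneously.

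For part (iii), the hypothesis that $\Lambda^R$ is power gives $b \notin R$ and, in particular, $d \notin R$. It suffices to exclude the product case for $S$, so suppose toward a contradiction that $q \in S$ and $p \notin S$. The last clause of condition (c) in Lemma \ref{lem:coextension-Quadrate}(i) asserts precisely that $d \notin R$ together with $q \in S$ forces $p \in S$ (indeed it even collapses $\Lambda^{R,S}$ to a single constant map). This contradicts $p \notin S$, so $S$ cannot be product, leaving $\Lambda^S$ \L ukasiewicz, reversed product, or power.

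I do not expect a genuine obstacle here: the substantive geometric and order-theoretic content has already been absorbed into Lemma \ref{lem:coextension-Quadrate}, quoted from \cite{Vet2}. The only points requiring care are bookkeeping ones. First, one must translate the four c-isomorphism types faithfully into endpoint membership via Theorem \ref{thm:Dreiecke}, so that the three conclusions are correctly rephrased as ``$p \in S$'' (for (i), (ii)) and ``not ($q \in S$ and $p \notin S$)'' (for (iii)). Second, one must verify that the proposition's hypotheses---namely $T < F$, the pair $R, T$ being $\und$-maximal, and neither $R$ nor $S$ a singleton---match exactly the hypotheses under which Lemma \ref{lem:coextension-Quadrate}(i) supplies both the implication $b \in R \Rightarrow p \in S$ and the implication that $d \notin R$ and $q \in S$ together force $p \in S$. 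Once that alignment is in place, (i)--(iii) are immediate.
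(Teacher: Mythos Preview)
Your proposal is correct and follows essentially the same approach as the paper: translate the c-isomorphism type of $\Lambda^R$ and $\Lambda^S$ into endpoint membership via Theorem~\ref{thm:Dreiecke}, then invoke Lemma~\ref{lem:coextension-Quadrate}(i) for the implication $b \in R \Rightarrow p \in S$ in parts (i)--(ii) and clause~(c) of that lemma for part~(iii). The paper's proof is simply a terser version of yours, omitting the explicit dictionary.
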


\begin{proof}
(i)--(ii) If $\Lambda^R$ is \L ukasiewicz or reversed product, $R$ has a smallest element. Hence, by Lemma \ref{lem:coextension-Quadrate}(i), also $S$ has a smallest element, that is, also $\Lambda^S$ is \L ukasiewicz or reversed product.

(iii) If $\Lambda^R$ is power, then $R$ has no largest element. If, in addition, $S$ has a largest element, then $S$ has by Lemma \ref{lem:coextension-Quadrate}(i)(c) also a smallest element. Hence $\Lambda^S$ cannot be product.
\end{proof}

For two congruence classes $R$ and $S$, call $\Lambda^{R,S}$ {\it trivial} if $S$ has a bottom element $p$ and $\Lambda^{R,S}$ consists of the single element $\constant{R}{p}$.

\begin{proposition} \label{prop:trivial-LambdaRS}
Let $R, T \in {\mathcal P}$ be such that $T < F$ and let $S = R \und T$. Then $\Lambda^{R,S}$ is trivial in each of the following cases:
\AufzAnfang
\Nummer{i} The pair $R, T$ is not $\und$-maximal.

\Nummer{ii} $R$ is a singleton.

\Nummer{iii} $S$ is a singleton.

\Nummer{iv} $\Lambda^R$ is reversed product and $\Lambda^S$ is \L ukasiewicz.

\Nummer{v} $\Lambda^R$ is power and $\Lambda^S$ is \L ukasiewicz.
\AufzEnde
\end{proposition}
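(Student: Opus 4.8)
The plan is to reduce every case to the appropriate clause of Lemma~\ref{lem:coextension-Quadrate}, translating the hypotheses on $\Lambda^R$ and $\Lambda^S$ into conditions on the endpoints of $R$ and $S$ by means of Theorem~\ref{thm:Dreiecke}. Case~(i) is immediate from Lemma~\ref{lem:coextension-Quadrate}(ii): if the pair $R, T$ is not $\und$-maximal, then $S$ has a smallest element $p$ and $\lambda^{R,S}_t = \constant{R}{p}$ for all $t \in T$, so $\Lambda^{R,S} = \{\constant{R}{p}\}$ is trivial. Since the remaining cases~(ii)--(v) carry no $\und$-maximality assumption, the first move is to observe that if $R, T$ fails to be $\und$-maximal, then case~(i) already delivers triviality; hence throughout the rest we may assume that $R, T$ is $\und$-maximal and invoke Lemma~\ref{lem:coextension-Quadrate}(i), writing $b = \inf R$, $d = \sup R$, $p = \inf S$, and $q = \sup S$ as there.

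For cases~(ii) and~(iii) I would read off the conclusion directly from the special branches of Lemma~\ref{lem:coextension-Quadrate}(i). If $R$ is a singleton, then $b = d$, and the lemma gives $p \in S$ together with $\lambda^{R,S}_t(b) = p$ for all $t \in T$; as $R = \{b\}$, every $\lambda^{R,S}_t$ equals $\constant{R}{p}$, and $p = \inf S \in S$ is the bottom element of $S$, so $\Lambda^{R,S}$ is trivial. If $S$ is a singleton, then $p = q$, and the lemma gives $\lambda^{R,S}_t = \constant{R}{p}$ for all $t \in T$; again $p$ is the unique, hence bottom, element of $S$, so $\Lambda^{R,S} = \{\constant{R}{p}\}$ is trivial.

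Cases~(iv) and~(v) are where Theorem~\ref{thm:Dreiecke} does the translating. If $\Lambda^R$ is reversed product or power, then $R$ has no largest element, i.e.\ $d \notin R$; and since these composition-tomonoid types occur only for non-singleton $R$, we have $b < d$. If $\Lambda^S$ is \L ukasiewicz, then $S$ has a largest element, i.e.\ $q \in S$, and $S$ is non-singleton, so $p < q$. We are therefore in the branch $b < d$, $p < q$ of Lemma~\ref{lem:coextension-Quadrate}(i), and the last clause of part~(c), which covers exactly the situation ``$d \notin R$ and $q \in S$'', applies and yields $p \in S$ and $\Lambda^{R,S} = \{\constant{R}{p}\}$. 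Hence $\Lambda^{R,S}$ is trivial in both cases.

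There is no serious obstacle here: the genuine structural work is already encapsulated in Lemma~\ref{lem:coextension-Quadrate}, and what remains is essentially bookkeeping. The only points that demand care are, first, remembering to peel off the non-$\und$-maximal subcase via case~(i) before one is entitled to apply Lemma~\ref{lem:coextension-Quadrate}(i), and second, translating each type hypothesis into precisely the correct endpoint condition (largest versus smallest element, membership versus non-membership) so that the intended clause of part~(c) is the one that fires.
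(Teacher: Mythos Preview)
Your proof is correct and follows essentially the same route as the paper: case~(i) via Lemma~\ref{lem:coextension-Quadrate}(ii), cases~(ii)--(iii) via the singleton branches of Lemma~\ref{lem:coextension-Quadrate}(i) after peeling off the non-$\und$-maximal subcase, and cases~(iv)--(v) via Theorem~\ref{thm:Dreiecke} to obtain $d \notin R$ and $q \in S$, then invoking the final clause of Lemma~\ref{lem:coextension-Quadrate}(i)(c). The paper's proof is terser but structurally identical.
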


\begin{proof}
(i) holds by Lemma \ref{lem:coextension-Quadrate}(ii).

(ii) holds by Lemma \ref{lem:coextension-Quadrate}(i),(ii).

(iii) is obvious.

(iv)--(v) By parts (i)--(iii), it suffices to show the claims in the case that $R, T$ is a $\und$-maximal pair and neither $R$ nor $S$ is a singleton. If $\Lambda^R$ is reversed product or power and $\Lambda^S$ is \L ukasiewicz, then $R$ has no largest element and $S$ has a largest element. By Lemma \ref{lem:coextension-Quadrate}(i)(c) it follows that $\Lambda^{R,S}$ is trivial.
\end{proof}

In the remaining cases, we determine $\Lambda^{R,S}$ on the basis of the following lemma, for whose proof we refer to \cite{Vet2}.

\begin{lemma} \label{lem:Xi}
Let $R, T \in {\mathcal P}$, where $T < F$, be $\und$-maximal, let $S = R \und T$, and assume that neither $R$ nor $S$ is a singleton. Then $\Lambda^{R,S}$ is contained in the set
\begin{equation} \label{fml:Xi}
\Xi^{R,S} \;=\; \{ \xi \colon R \to S \;\colon\;
             \xi \circ \rho_R(f) = \rho_S(f) \circ \xi \;\text{ \rm for all $f \in F$} \}.
\end{equation}
In fact, if $R$ has a largest element or $S$ has no smallest element, then either $\Lambda^{R,S} = \Xi^{R,S}$ or there is a $\zeta \in \Xi^{R,S}$ such that $\Lambda^{R,S} = \{ \xi \in \Xi^{R,S} \colon \xi \leq \zeta \}$. If $S$ has a smallest element $p$ but neither $R$ nor $S$ have a largest element, then either $\Lambda^{R,S}$ is trivial or $\Lambda^{R,S} = \Xi^{R,S} \ohne \{ \constant{R}{p} \}$ or there is a $\zeta \in \Xi^{R,S} \ohne \{ \constant{R}{p} \}$ such that $\Lambda^{R,S} = \{ \xi \in \Xi^{R,S} \ohne \{ \constant{R}{p} \} \colon \xi \leq \zeta \}$.
\end{lemma}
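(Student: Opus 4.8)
The plan is to split the argument into the inclusion $\Lambda^{R,S} \subseteq \Xi^{R,S}$, which is immediate, and the description of $\Lambda^{R,S}$ as a down-set of $\Xi^{R,S}$, which is the substance. For the inclusion, recall from Lemma \ref{lem:coextension-Dreiecke} that $\rho_R(f) = \lambda^R_f$ and $\rho_S(f) = \lambda^S_f$. Hence, for a translation $\xi = \lambda^{R,S}_t$ with $t \in T$, the defining condition of $\Xi^{R,S}$ in (\ref{fml:Xi}) reads $\lambda^{R,S}_t \circ \lambda^R_f = \lambda^S_f \circ \lambda^{R,S}_t$ for all $f \in F$, which is precisely the commuting relation (\ref{fml:commuting-action}) established in Lemma \ref{lem:coextension-Quadrate}(i). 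Thus every element of $\Lambda^{R,S}$ is an intertwiner, and $\Lambda^{R,S} \subseteq \Xi^{R,S}$.

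Next I would show that $\Xi^{R,S}$ is totally ordered by the pointwise order, so that the notion of a down-set with a top element is meaningful. The mechanism is that an intertwiner is rigidly tied to the $F$-action: given $r_0 \leq r_1$ in $R$, the point $r_0$ lies in the $F$-orbit $\{\rho_R(f)(r_1) \colon f \in F\}$ of $r_1$ whenever the filter orbits are cofinal downward, which by Theorem \ref{thm:filter-homomorphisms} is exactly the product-filter situation; in the \L ukasiewicz-filter situation both $R$ and $S$ carry largest elements, and one argues from the top instead. In either case, if $\xi(r_1)$ and $\xi'(r_1)$ are comparable, the inequality is transported to $r_0$ by the order-preserving maps $\rho_S(f)$, so two intertwiners cannot cross. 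Consequently $\Xi^{R,S}$ is a chain; note also that $\Lambda^{R,S}$ is itself totally ordered and closed under admissible pointwise suprema by Lemma \ref{lem:coextension-Quadrate}(i)(b). Since a down-set of a chain is either the whole chain or bounded above, it then remains to prove downward closure of $\Lambda^{R,S}$ and the attainment of a top element $\zeta$ when $\Lambda^{R,S}$ is proper.

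The crux is the downward closure, which I would establish by a residuation argument inside $\mathcal L$. Suppose $\xi \in \Xi^{R,S}$ satisfies $\xi \leq \lambda^{R,S}_{t_1}$ for some realised $t_1 \in T$. If $R$ has a largest element $d$, the natural candidate is $t = d \impl \xi(d)$; using the $\und$-maximality of the pair $R, T$ together with the explicit forms of $\rho_R, \rho_S$ from Theorem \ref{thm:filter-homomorphisms}, one checks that $t \und r = \xi(r)$ for all $r \in R$ and that $t$ falls into the class $T$, so $\xi = \lambda^{R,S}_t$. If instead $S$ has no smallest element, the same $t$ is obtained by approximating $\xi$ from below along $F$-orbits and invoking the closure of $\Lambda^{R,S}$ under admissible suprema in Lemma \ref{lem:coextension-Quadrate}(i)(b); here the absence of a least element of $S$ removes the only obstruction to realisability. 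The top element $\zeta$ of a proper $\Lambda^{R,S}$ then exists because $\Lambda^{R,S}$ is the image of the interval $T$ under the order-preserving, admissible-sup-preserving map $t \mapsto \lambda^{R,S}_t$, so its supremum, when it remains in $S$, is attained and lies in $\Lambda^{R,S}$. This yields the first of the two cases in the statement.

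The genuinely delicate case, which I expect to be the main obstacle, is the one where $S$ has a smallest element $p$ but neither $R$ nor $S$ has a largest element. The evaluation-at-$\max R$ trick is then unavailable, and the constant map $\constant{R}{p}$ sits at the bottom of $\Xi^{R,S}$ and may or may not belong to $\Lambda^{R,S}$. The dichotomy is governed precisely by Lemma \ref{lem:coextension-Quadrate}(i)(c): either $\constant{R}{p}$ is forced into $\Lambda^{R,S}$, in which case $\Lambda^{R,S}$ is trivial, or $\constant{R}{p} \notin \Lambda^{R,S}$, in which case the downward-closure and top-attainment arguments are repeated verbatim on the chain $\Xi^{R,S} \ohne \{ \constant{R}{p} \}$, checking that no realised translation equals $\constant{R}{p}$ and that the supremum producing $\zeta$ likewise avoids the bottom constant. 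These three alternatives are exactly those listed. The careful bookkeeping required to excise $\constant{R}{p}$ while retaining realisability of every other intertwiner is where the real work of the proof lies.
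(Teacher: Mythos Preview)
The paper does not actually prove this lemma: the sentence preceding it reads ``for whose proof we refer to \cite{Vet2}'', and no argument is given in the present text. So there is no in-paper proof to compare your proposal against.

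On its own merits, your outline is a reasonable reconstruction. The inclusion $\Lambda^{R,S} \subseteq \Xi^{R,S}$ via (\ref{fml:commuting-action}) is exactly right, and your identification of the three tasks---total ordering of $\Xi^{R,S}$, downward closure of $\Lambda^{R,S}$ inside it, and attainment of a top element---is the natural decomposition. Your use of the explicit homomorphisms of Theorem \ref{thm:filter-homomorphisms} is legitimate here since the lemma sits in the Archimedean section and those results precede it.

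The one step that would need more than a sketch is your residuation candidate $t = d \impl \xi(d)$ for downward closure. You assert that $t$ lands in the class $T$, but this is not automatic: you have $\xi(d) \leq t_1 \und d$ with $t_1 \in T$, which only gives $t_1 \leq t$, and you must still rule out that $t$ has climbed into a strictly higher $F$-class. The clean way around this---and likely closer to what is done in \cite{Vet2}---is to observe that in the Archimedean setting $\Lambda^S$ acts transitively on the non-minimal part of $S$, so any $\xi \in \Xi^{R,S}$ below a realised $\lambda^{R,S}_{t_1}$ is obtained as $\lambda^S_f \circ \lambda^{R,S}_{t_1} = \lambda^{R,S}_{f \und t_1}$ for a suitable $f \in F$; then $f \und t_1 \in T$ because $T$ is an $F$-class. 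Your argument would go through once this is made explicit, but as written that step is asserted rather than verified.
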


Given a pair of congruence classes $R$ and $S$, where $S = R \und T$ for some $T < F$, Lemma \ref{lem:Xi} states that $\Lambda^{R,S}$ is essentially uniquely determined by $\Lambda^R$ and $\Lambda^S$ together with the homomorphisms $\rho_R$ and $\rho_S$. Indeed, knowing $\lambda^R_f$ and $\lambda^S_f$ for each $f \in F$ means that we can say the following: for any non-minimal element $r$ of $R$ and any non-minimal element $s$ of $S$, there is at most one mapping $\lambda \colon R \to S$ in $\Lambda^{R,S}$ such that $\lambda(r) = s$. An illustration of this fact can be found in Figure \ref{fig:Xi}.

\begin{figure}[ht]
\begin{center}
\includegraphics[width=0.6\textwidth]{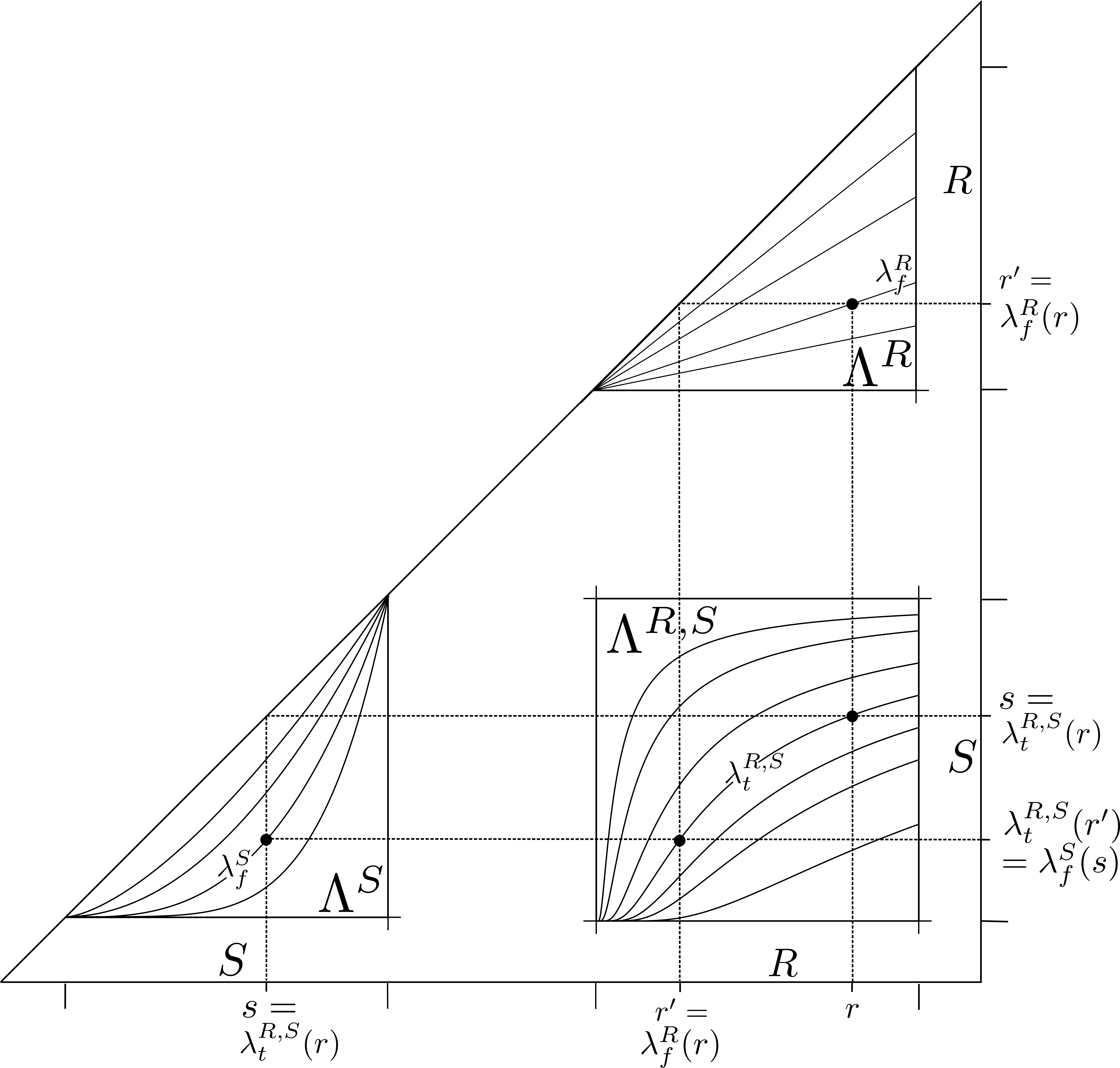}
\caption{Assume that, for some $t \in T$, we have $\lambda^{R,S}_t(r) = s$. Let $f \in F$ and $r' = \lambda^R_f(r)$. Now, applying to $r$ first $\lambda^R_f$ and then $\lambda^{R,S}_t$ leads to the same result as applying first $\lambda^{R,S}_t$ and then $\lambda^S_f$. That is, $\lambda^{R,S}(r') = \lambda^{R,S}_t(\lambda^R_f(r)) = \lambda^S_f(\lambda^{R,S}_t(r)) = \lambda^S_f(s)$ and we see that the value of $\lambda^{R,S}$ at $r'$ depends only on $s$ and $\lambda^S_f$. It is in fact possible to show that the whole mapping $\lambda^{R,S}_t$ is uniquely determined by its value at a single point.
}
\label{fig:Xi}
\end{center}
\end{figure}

We conclude from Lemma \ref{lem:Xi} that in order to determine $\Lambda^{R,S}$ we have to calculate $\Xi^{R,S}$ as defined by (\ref{fml:Xi}). In what follows, we will do so for every possible combination of c-isomorphism types of $\Lambda^R$ and $\Lambda^S$.

In the theorems that follow, we again use unit coordinates and we assume that the homomorphisms $\rho_R \colon F \to \Lambda^R$ and $\rho_S \colon F \to \Lambda^S$ are given according to Theorem \ref{thm:filter-homomorphisms}. We will denote the coefficient applying to $\rho_R$ by $\alpha_R$ and the one applying to $\rho_S$ by $\alpha_S$.

\begin{theorem} \label{thm:LambdaRS-Lukasiewicz}
Let the filter $F$ have a smallest element. Let $R, T \in {\mathcal P}$, where $T < F$, be $\und$-maximal, let $S = R \und T$, and assume that neither $R$ nor $S$ is a singleton. Then $\Lambda^R$ and $\Lambda^S$ are \L ukasiewicz and, under the identification of $R$ and $S$ with $[0,1]$, $\;\Lambda^{R,S}$ consists of the mappings
\[ \lambda_z \colon [0,1] \to [0,1]
             \komma r \mapsto (\tfrac{\alpha_S}{\alpha_R} r + z) \vee 0, \]
where $-\frac{\alpha_S}{\alpha_R} \leq z \leq m$ for some $m \in [-\frac{\alpha_S}{\alpha_R}, (1-\frac{\alpha_S}{\alpha_R}) \wedge 0]$.
\end{theorem}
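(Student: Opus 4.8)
The plan is to reduce the statement to a computation of the set $\Xi^{R,S}$ of Lemma \ref{lem:Xi} and then to invoke that lemma. First I would record the two preliminary facts. Since $F$ has a smallest element, Theorem \ref{thm:extending-filter} makes $F$ a \L ukasiewicz tomonoid, and then Theorem \ref{thm:filter-homomorphisms}(i), applied to the non-singletons $R$ and $S$, shows that both $\Lambda^R$ and $\Lambda^S$ are \L ukasiewicz and that $\rho_R, \rho_S$ have the stated affine-truncated form with coefficients $\alpha_R, \alpha_S \geq 1$. In particular $R$ has a largest element, so Lemma \ref{lem:Xi} applies in its first clause: $\Lambda^{R,S}$ is either all of $\Xi^{R,S}$ or a principal down-segment $\{\xi \in \Xi^{R,S} \colon \xi \leq \zeta\}$. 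Hence everything reduces to computing $\Xi^{R,S}$ and checking that its pointwise order coincides with the order of the parameter.

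To compute $\Xi^{R,S}$ I would pass to auxiliary coordinates, in which $R$, $S$ and $F$ all become $[-1,0]$ with the operation $\truncplus$, so that, writing $g = f - 1$, the translations read $\rho_R(f) \colon x \mapsto (x + \alpha_R g) \vee -1$ and $\rho_S(f) \colon y \mapsto (y + \alpha_S g) \vee -1$. The defining condition $\xi \circ \rho_R(f) = \rho_S(f) \circ \xi$ then becomes
\[ \xi((x + \alpha_R g) \vee -1) \;=\; (\xi(x) + \alpha_S g) \vee -1 \qquad\text{for all } x, g \in [-1,0]. \]
Setting $x = 0$ and letting $g$ range over $[-\tfrac{1}{\alpha_R}, 0]$, so that $\alpha_R g$ sweeps all of $[-1,0]$, forces $\xi(x) = (\tfrac{\alpha_S}{\alpha_R} x + c) \vee -1$ with $c = \xi(0) \in [-1,0]$. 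Thus every candidate $\xi_c$ in $\Xi^{R,S}$ is of this affine-truncated shape, parametrised by $c$.

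The delicate step — and the place I expect the real work to sit — is to pin down exactly which values of $c$ are admissible by substituting $\xi_c$ back into the displayed identity. On the region $x + \alpha_R g \geq -1$ the identity collapses to the untruncated affine relation and holds for every $c$. The truncation region $x + \alpha_R g < -1$ is decisive: there the left-hand side equals $\xi_c(-1) = (c - \tfrac{\alpha_S}{\alpha_R}) \vee -1$, and matching it against the right-hand side survives precisely when $c \leq (\tfrac{\alpha_S}{\alpha_R} - 1) \wedge 0$. Concretely, if $c$ exceeds this bound (which can happen only when $\alpha_S < \alpha_R$) the two sides differ for $x = 0$ and $g$ slightly below $-\tfrac{1}{\alpha_R}$, whereas for $c$ within the bound both sides collapse to $-1$. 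Hence $\Xi^{R,S} = \{ \xi_c \colon -1 \leq c \leq (\tfrac{\alpha_S}{\alpha_R} - 1) \wedge 0 \}$.

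Finally I would translate back to unit coordinates through $x = r - 1$, under which $\xi_c$ becomes $\lambda_z \colon r \mapsto (\tfrac{\alpha_S}{\alpha_R} r + z) \vee 0$ with $z = c - \tfrac{\alpha_S}{\alpha_R} + 1$; the admissible band $-1 \leq c \leq (\tfrac{\alpha_S}{\alpha_R} - 1) \wedge 0$ then reads $-\tfrac{\alpha_S}{\alpha_R} \leq z \leq (1 - \tfrac{\alpha_S}{\alpha_R}) \wedge 0$. Since $\lambda_z$ is pointwise increasing in $z$, the pointwise order on $\Xi^{R,S}$ is the order of $z$, and the down-segment description supplied by Lemma \ref{lem:Xi} yields $\Lambda^{R,S} = \{ \lambda_z \colon -\tfrac{\alpha_S}{\alpha_R} \leq z \leq m \}$ for some $m \in [-\tfrac{\alpha_S}{\alpha_R}, (1 - \tfrac{\alpha_S}{\alpha_R}) \wedge 0]$, which is the assertion.
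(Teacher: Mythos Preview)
Your proposal is correct and follows essentially the same route as the paper: pass to auxiliary coordinates, solve the commutation equation $\lambda((r+\alpha_R f)\vee -1)=(\lambda(r)+\alpha_S f)\vee -1$ to obtain the affine-truncated family $\lambda(r)=(\tfrac{\alpha_S}{\alpha_R}r+l)\vee -1$, single out the admissible $l$ by the case split $\alpha_S\gtrless\alpha_R$, and convert back. Your version is in fact slightly more explicit than the paper's, since you spell out the appeal to Lemma~\ref{lem:Xi} (using that $R$ has a largest element) and the monotone dependence of $\lambda_z$ on $z$ to justify the down-segment form of $\Lambda^{R,S}$.
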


\begin{proof}
In auxiliary coordinates, the filter is $([-1,0]; \leq, \truncplus, 0)$, the composition tomonoids are $\{ \tau^{[-1,0]}_t \colon t \in [-1,0]\}$ and $\{ \tau^{[-1,0]}_t \colon t \in [-1,0]\}$, and the homomorphisms are $f \mapsto \tau^{[-1,0]}_{\alpha_R f \vee -1}$ and $f \mapsto \tau^{[-1,0]}_{\alpha_S f \vee -1}$, respectively. We have to determine the mappings $\lambda \colon [-1,0] \to [-1,0]$ such that
\begin{equation} \label{fml:Luk-LukLuk}
\lambda \circ \tau^{[-1,0]}_{\alpha_R f \vee -1} \;=\; \tau^{[-1,0]}_{\alpha_S f \vee -1} \circ \lambda,
\quad f \in [-1,0].
\end{equation}
This means $\lambda((r + \alpha_R f) \vee -1) = (\lambda(r) + \alpha_S f) \vee -1$ for all $f, r \in [-1,0]$, or $\lambda((r + d) \vee -1) = (\lambda(r) + \frac{\alpha_S}{\alpha_R} d) \vee -1$ for all $r \in [-1,0]$ and $d \in [-\alpha_R, 0]$. Putting $l = \lambda(0)$, we conclude that the solutions are of the form
\begin{equation} \label{fml:real-lambda}
\lambda(r) = (\tfrac{\alpha_S}{\alpha_R} r + l) \vee -1, \quad r \in [-1,0].
\end{equation}
If $\alpha_S \geq \alpha_R$, (\ref{fml:Luk-LukLuk}) is fulfilled for all $l \in [-1,0]$; if $\alpha_S < \alpha_R$, (\ref{fml:Luk-LukLuk}) is fulfilled if and only if $l \leq \frac{\alpha_S}{\alpha_R} -1$. A conversion to unit coordinates proves the assertion.
\end{proof}

\begin{theorem} \label{thm:LambdaRS-product}
Let the filter $F$ not have a smallest element. Let $R, T \in {\mathcal P}$, where $T < F$, be $\und$-maximal, and let $S = R \und T$.
\AufzAnfang
\Nummer{i} Let $\Lambda^R$ and $\Lambda^S$ be \L ukasiewicz. Then, under the identification of $R$ and $S$ with $[0,1]$, $\;\Lambda^{R,S}$ consists of the mappings
\[ \lambda_z \colon [0,1] \to [0,1]
             \komma r \mapsto (\tfrac{\alpha_S}{\alpha_R} r + z) \vee 0, \]
where $-\frac{\alpha_S}{\alpha_R} \leq z \leq m$ for some $m \in [-\frac{\alpha_S}{\alpha_R}, (1-\frac{\alpha_S}{\alpha_R}) \wedge 0]$.

\Nummer{ii} Let $\Lambda^R$ be \L ukasiewicz and $\Lambda^S$ reversed product. Then, under the identification of $R$ and $S$ with $[0,1]$ and $[0,1)$, respectively, $\Lambda^{R,S}$ consists of the mappings
\[ \lambda_z \colon [0,1] \to [0,1) 
             \komma r \mapsto (1 - e^{- \frac{\alpha_S}{\alpha_R} (r+z)}) \vee 0, \]
where $-1 \leq z \leq m$ for some $m \in [-1,0]$.

\Nummer{iii} Let $\Lambda^R$ be product and $\Lambda^S$ \L ukasiewicz. Then, under the identification of $R$ and $S$ with $(0,1]$ and $[0,1]$, respectively, $\Lambda^{R,S}$ consists of the mappings
\[ \lambda_z \colon (0,1] \to [0,1]
             \komma r \mapsto (\tfrac{\alpha_S}{\alpha_R} \ln r + z) \vee 0, \]
where $0 \leq z \leq m$ for some $m \in [0,1]$.

\Nummer{iv} Let $\Lambda^R$ and $\Lambda^S$ be product. Then, under the identification of $R$ and $S$ with $(0,1]$, $\;\Lambda^{R,S}$ consists of the mappings
\[ \lambda_z \colon (0,1] \to (0,1] 
             \komma r \mapsto z \, r^{\frac{\alpha_S}{\alpha_R}}, \]
where $0 < z \leq m$ for some $m \in (0,1]$.

\Nummer{v} Let $\Lambda^R$ be product and $\Lambda^S$ reversed product. Then, under the identification of $R$ and $S$ with $(0,1]$ and $[0,1)$, respectively, $\Lambda^{R,S}$ consists of the mappings
\[ \lambda_z \colon (0,1] \to [0,1) 
             \komma r \mapsto (1 - \frac 1{z \, r^{\frac{\alpha_S}{\alpha_R}}}) \vee 0, \]
where either $z \geq 1$ or $1 \leq z \leq m$ for some $m \geq 1$.

\Nummer{vi} Let $\Lambda^R$ be product and $\Lambda^S$ power. Then, under the identification of $R$ and $S$ with $(0,1]$ and $(0,1)$, respectively, $\Lambda^{R,S}$ consists of the mappings
\[ \lambda_z \colon (0,1] \to (0,1) 
             \komma r \mapsto z^{r^{-\frac{\alpha_S}{\alpha_R}}}, \]
where either $0 < z < 1$ or $0 < z \leq m$ for some $m \in (0,1)$.

\Nummer{vii} Let $\Lambda^R$ and $\Lambda^S$ be reversed product. Then, under the identification of $R$ and $S$ with $[0,1)$, $\;\Lambda^{R,S}$ is either trivial or contained in the set of mappings
\[ \lambda_z \colon [0,1) \to [0,1) 
             \komma r \mapsto (1 - \frac{(1-r)^{\frac{\alpha_S}{\alpha_R}}}z) \vee 0, \]
where $0 < z \leq m$ for some $m \in (0,1]$.

\Nummer{viii} Let $\Lambda^R$ be power and $\Lambda^S$ reversed product. Then, under the identification of $R$ and $S$ with $(0,1)$ and $[0,1)$, respectively, $\Lambda^{R,S}$ is either trivial or contained in the set of mappings
\[ \lambda_z \colon (0,1) \to [0,1)
             \komma r \mapsto (1 - \frac{(-\ln r)^{\frac{\alpha_S}{\alpha_R}}}z) \vee 0, \]
where either $z > 0$ or $0 < z \leq m$ for some $m > 0$.

\Nummer{ix} Let $\Lambda^R$ and $\Lambda^S$ be power. Then, under the identification of $R$ and $S$ with $(0,1)$, $\;\Lambda^{R,S}$ consists of the mappings
\[ \lambda_z \colon (0,1) \to (0,1)
             \komma r \mapsto z^{(-\ln r)^{\frac{\alpha_S}{\alpha_R}}}, \]
where either $0 < z < 1$ or $0 < z \leq m$ for some $m \in (0,1)$.
\AufzEnde
\end{theorem}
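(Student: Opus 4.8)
The plan is to compute, for each of the nine combinations, the set $\Xi^{R,S}$ defined in Lemma~\ref{lem:Xi}, and then to use that lemma to cut it down to $\Lambda^{R,S}$. I work throughout in the auxiliary coordinates of Lemma~\ref{lem:composition-tomonoid-R}. Since $F$ lacks a smallest element, Theorem~\ref{thm:extending-filter} identifies $F$ with the product tomonoid, so in these coordinates $F = (\Reals^-; \leq, +, 0)$ and, by Theorem~\ref{thm:filter-homomorphisms}(ii)--(v), the homomorphisms are $\rho_R(f) = \tau^R_{\alpha_R f}$ and $\rho_S(f) = \tau^S_{\alpha_S f}$ for $f \in \Reals^-$, where $\tau^R$ and $\tau^S$ are the canonical translations on the base sets $[-1,0]$, $\Reals^-$, $\Reals^+$, $\Reals$ attached to the four c-isomorphism types (truncated at $-1$ in the \L ukasiewicz case, at $0$ in the reversed-product case, untruncated in the product and power cases).

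Writing $\beta = \frac{\alpha_S}{\alpha_R} > 0$ and substituting $c = \alpha_R f$, which ranges over all of $\Reals^-$, the defining relation of $\Xi^{R,S}$ becomes $\xi(\tau^R_c(x)) = \tau^S_{\beta c}(\xi(x))$ for every $c \in \Reals^-$. On the region where no truncation is active this reads $\xi(x+c) = \xi(x) + \beta c$; since $c$ runs through a whole half-line, fixing one point immediately forces $\xi(y) = \beta y + \ell$ there, with a single real parameter $\ell$. Thus every $\xi \in \Xi^{R,S}$ is affine of slope $\beta$ in the additive coordinate. In the cases with untruncated target (iv), (vi), (ix) this $\ell$ is free and $\Xi^{R,S}$ is the full one-parameter family. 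In the truncated cases I solve the relation first on the untruncated part to obtain this affine shape, then read off the boundary value at the bottom of $R$ (respectively $S$) by letting $c \to -\infty$: this pins $\xi$ at the truncation threshold and confines $\ell$ to the range for which $\xi$ still maps into the base of $S$, in agreement with the boundary prescriptions of Lemma~\ref{lem:coextension-Quadrate}(i)(a)--(c). Converting the resulting family back to unit coordinates through the isomorphisms $a \mapsto a-1$, $a \mapsto \ln a$, $a \mapsto -\ln(1-a)$, $a \mapsto -\ln(-\ln a)$ yields exactly the displayed maps $\lambda_z$, the parameter $z$ being the unit-coordinate image of $\ell$.

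The final step passes from $\Xi^{R,S}$ to $\Lambda^{R,S}$ by invoking Lemma~\ref{lem:Xi}. Whenever $R$ has a largest element or $S$ has no smallest element --- that is, in parts (i)--(vi) and (ix) --- the lemma forces $\Lambda^{R,S}$ to be either all of $\Xi^{R,S}$ or a down-set $\{\xi \in \Xi^{R,S} : \xi \leq \zeta\}$, and this is precisely the dichotomy between ``$z$ ranges over the full interval'' and ``$z \leq m$ for some $m$'' recorded in the statements. In the two remaining cases (vii) and (viii), where $S$ is reversed product (hence has a smallest element $p$ but no largest element) and $R$ likewise has no largest element, the third alternative of Lemma~\ref{lem:Xi} applies: $\Lambda^{R,S}$ is either trivial, or $\Xi^{R,S} \ohne \{\constant{R}{p}\}$, or a down-segment thereof --- exactly the ``either trivial or contained in\,\ldots'' formulation there. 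Whether the boundary map $\constant{R}{p}$ is admitted is settled by Lemma~\ref{lem:coextension-Quadrate}(i)(c).

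The main obstacle will be the truncated cases (i)--(iii), (v), (vii), (viii), in which $\Lambda^R$ or $\Lambda^S$ is \L ukasiewicz or reversed product. There the conjugacy relation has to be analysed on and across the truncation threshold rather than on a single affine branch: one must check that the value forced by $c \to -\infty$ is consistent with the affine branch at the threshold, which is exactly what produces the one-sided bound on $\ell$ (for instance $\ell \leq \beta - 1$ in part (i), equivalently the upper limit $(1-\frac{\alpha_S}{\alpha_R}) \wedge 0$ on $m$), and that the truncated extension remains sup-preserving. By contrast, the untruncated cases (iv), (vi), (ix) are immediate once the observation ``$\xi(y) = \beta y + \ell$'' has been made.
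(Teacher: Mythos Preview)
Your proposal is correct and follows essentially the same route as the paper: pass to auxiliary coordinates, write the conjugacy relation $\xi\circ\tau^R_{\alpha_R f}=\tau^S_{\alpha_S f}\circ\xi$, observe that on the untruncated regime this forces $\xi(y)=\beta y+\ell$, handle the truncation boundaries to pin down the admissible range of $\ell$, convert back to unit coordinates, and finally invoke Lemma~\ref{lem:Xi} to pass from $\Xi^{R,S}$ to $\Lambda^{R,S}$. The paper carries this out case by case rather than stating the unified affine observation up front, but the substance is the same.
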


\begin{proof}
(i) In auxiliary coordinates, the filter is $(\Reals^-; \leq, +, 0)$. Apart from that, the situation is as in Theorem \ref{thm:LambdaRS-Lukasiewicz}; we proceed similarly as in the proof of that theorem.

(ii) The filter is $(\Reals^-; \leq, +, 0)$; the composition tomonoids are $\{ \tau^{[-1,0]}_t \colon t \in [-1,0] \}$ and $\{ \tau^{\Reals^+}_t \colon t \in \Reals^- \}$, and the homomorphisms are $f \mapsto \tau^{[-1,0]}_{\alpha_R f \vee -1}$ and $f \mapsto \tau^{\Reals^+}_{\alpha_S f}$, respectively. Hence we have to determine $\lambda \colon [-1,0] \to \Reals^+$ such that
\begin{equation} \label{fml:Pro-LukRp}
\lambda \circ \tau^{[-1,0]}_{\alpha_R f \vee -1} \;=\; \tau^{\Reals^+}_{\alpha_S f} \circ \lambda, \quad f \in \Reals^-.
\end{equation}
This means $\lambda((r + \alpha_R f) \vee -1) = (\lambda(r) + \alpha_S f) \vee 0$ for all $f \leq 0$ and $r \in [-1,0]$. Putting $l = \lambda(0)$, we get $\lambda(r) = (\tfrac{\alpha_S}{\alpha_R} r + l) \vee 0$, and (\ref{fml:Pro-LukRp}) is fulfilled if and only if $l \leq \frac{\alpha_S}{\alpha_R}$.

(iii) We have to determine $\lambda \colon \Reals^- \to [-1,0]$ such that
\begin{equation} \label{fml:Pro-ProLuk}
\lambda \circ \tau^{\Reals^-}_{\alpha_R f} \;=\; \tau^{[-1,0]}_{\alpha_S f \vee -1} \circ \lambda, \quad f \in \Reals^-.
\end{equation}
It follows $\lambda(r + \alpha_R f) = (\lambda(r) + \alpha_S f) \vee -1$ for $f, r \leq 0$. Putting $l = \lambda(0)$, we get $\lambda(r) = (\tfrac{\alpha_S}{\alpha_R} r + l) \vee -1$, and these mappings are solutions for all $l \in [-1,0]$.

(iv) We have to determine $\lambda \colon \Reals^- \to \Reals^-$ such that
\[ \lambda \circ \tau^{\Reals^-}_{\alpha_R f} \;=\; \tau^{\Reals^-}_{\alpha_S f} \circ \lambda, \quad f \in \Reals^-. \]
The solutions are $\lambda(r) = \tfrac{\alpha_S}{\alpha_R} r + l$, where $l \in \Reals^-$.

(v) We have to determine $\lambda \colon \Reals^- \to \Reals^+$ such that
\[ \lambda \circ \tau^{\Reals^-}_{\alpha_R f} \;=\; \tau^{\Reals^+}_{\alpha_S f} \circ \lambda, \quad f \in \Reals^-. \]
The solutions are $\lambda(r) = (\tfrac{\alpha_S}{\alpha_R} r + l) \vee 0$, where $l \in \Reals^+$.

(vi) We have to determine $\lambda \colon \Reals^- \to \Reals$ such that
\[ \lambda \circ \tau^{\Reals^-}_{\alpha_R f} \;=\; \tau^{\Reals}_{\alpha_S f} \circ \lambda, \quad f \in \Reals^-. \]
We conclude $\lambda(r) = \frac{\alpha_S}{\alpha_R} r + l$, where $l \in \Reals$.

(vii) We have to determine $\lambda \colon \Reals^+ \to \Reals^+$ such that
\[ \lambda \circ \tau^{\Reals^+}_{\alpha_R f} \;=\; \tau^{\Reals^+}_{\alpha_S f} \circ \lambda, \quad f \in \Reals^-. \]
This means $\lambda((r + \alpha_R f) \vee 0) = (\lambda(r) + \alpha_S f) \vee 0$ for any $f \leq 0$ and $r \geq 0$. One solution is $\lambda(r) = 0$ for all $r$. Otherwise, there is a $C > 0$ such that $\lambda(C) > 0$. For $0 \leq r \leq C$ we then have $\lambda(r) = (\frac{\alpha_S}{\alpha_R} r + l) \vee 0$ for some $l \leq 0$. As $\lambda$ is monotone, we can choose $C$ arbitrarily large; we conclude that $\lambda$ is for all $r$ of this form.

(viii) We have to determine $\lambda \colon \Reals \to \Reals^+$ such that
\[ \lambda \circ \tau^{\Reals}_{\alpha_R f} \;=\; \tau^{\Reals^+}_{\alpha_S f} \circ \lambda, \quad f \in \Reals^-. \]
This means $\lambda(r + \alpha_R f) = (\lambda(r) + \alpha_S f) \vee 0$ for any $f \leq 0$ and any $r$. Again, one solution is $\lambda(r) = 0$ for all $r$. Otherwise, there is a $C \in \Reals$ such that $\lambda(C) > 0$ and we conclude similarly as in part (vii) that the solutions are $\lambda(r) = (\frac{\alpha_S}{\alpha_R} r + l) \vee 0$ for any $l \in \Reals$.

(ix) We have to determine $\lambda \colon \Reals \to \Reals$ such that
\[ \lambda \circ \tau^{\Reals}_{\alpha_R f} \;=\; \tau^{\Reals}_{\alpha_S f} \circ \lambda, \quad f \in \Reals^-. \]
This means $\lambda(r + \alpha_R f) = \lambda(r) + \alpha_S f$ for any $f \leq 0$ and any $r$. The solutions are $\lambda(r) = \frac{\alpha_S}{\alpha_R} r + l$ for any $l \in \Reals$.
\end{proof}

A graphical illustration of Theorem \ref{thm:LambdaRS-product} is found in Figure \ref{fig:Quadratalgebren}.

\begin{figure}[ht!]
\begin{center}
\includegraphics[width=0.8\textwidth]{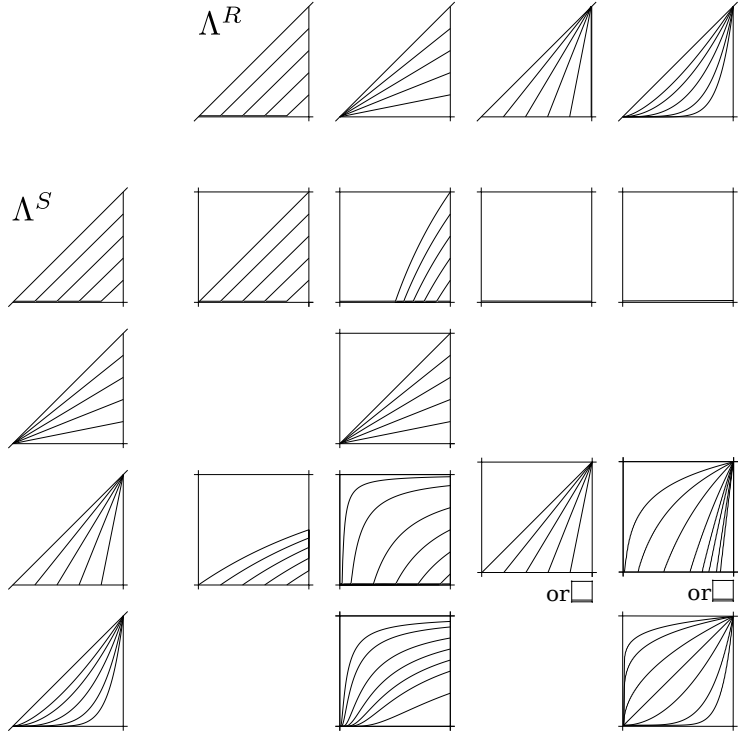}
\caption{A qualitative view on the sets $\Lambda^{R,S}$ depending on $\Lambda^R$ and $\Lambda^S$, in the context of an Archimedean real coextension. Standard coordinates are used and it is assumed that $\alpha_R = \alpha_S = 1$. $\Lambda^{R,S}$ is a downwards closed subset of the indicated set of mappings, respectively.}
\label{fig:Quadratalgebren}
\end{center}
\end{figure}

\section{Semilattice coextensions}
\label{sec:semilattice-coextensions}

In this section, we investigate semilattice real coextensions. The extending filter is in this case a semilattice and can hence be identified with an almost complete chain; we recall that the monoidal product is simply the minimum. Whereas an Archimedean q.n.c.\ tomonoid $F$ possesses by definition no filter apart from $\{1\}$ and $F$, a semilattice $F$ possesses so-to-say the maximal possible amount of filters, namely, $d^\leq$ for each $d \in F$ as well as $d^<$ for each $d \in \mitNull{F} \ohne \{1\}$ such that $d = \bigwedge_{a > d} a$. Consequently, whereas an Archimedean coextension cannot be split into two successive non-trivial coextensions, a non-trivial semilattice real coextension can be replaced by a sequence of non-trivial coextensions whose length is not bounded from above.

We proceed in analogy to the previous section. Again, $\mathcal L$ is supposed to be a non-trivial q.n.c.\ tomonoid, $F$ is a non-trivial semilattice filter such that each $F$-class is order-isomorphic to a real interval, and $\mathcal P$ is the quotient of $\mathcal L$ by $F$. We wish to characterise $\mathcal L$ given $\mathcal P$ and $F$.

We start with a simple observation.

\begin{lemma}
Each $f \in F$ is an idempotent element of $\mathcal L$.
\end{lemma}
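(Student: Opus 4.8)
The plan is to simply unwind the definition of a semilattice filter. By hypothesis $F$ is a \emph{semilattice} filter, which by Definition \ref{def:Archimedean} means that the monoidal operation of $F$, regarded as a q.n.c.\ tomonoid in its own right, is the minimum: for all $a, b \in F$ we have $a \und b = a \wedge b$. Since $F$ is a subtomonoid of $\mathcal L$, the operation $\und$ on $F$ is nothing but the restriction of the operation $\und$ of $\mathcal L$; hence this identity is an identity in $\mathcal L$ as well.

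Given this, the statement is immediate. First I would fix an arbitrary $f \in F$ and apply the semilattice identity with $a = b = f$, obtaining $f \und f = f \wedge f$. Then, since $\wedge$ is idempotent on any chain, $f \wedge f = f$, so that $f \und f = f$. As the product computed in $F$ coincides with the product in $\mathcal L$, this shows precisely that $f$ is an idempotent element of $\mathcal L$, as claimed.

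There is essentially no obstacle here: the entire content lies in recognising that ``semilattice'' forces $\und$ to restrict to $\wedge$ on $F$, after which idempotency of $\wedge$ finishes the argument. It is worth noting the place this observation occupies in the development: by Lemma \ref{lem:filters-tnorm}(i), $d^\leq = [d,1]$ is a filter exactly when $d$ is idempotent, so the present lemma confirms that in a semilattice filter every initial segment $d^\leq$ is again a filter. This is the formal counterpart of the remark, made at the start of this section, that a semilattice carries the maximal possible supply of filters, and it is what will later license decomposing a semilattice coextension into a chain of successive coextensions.
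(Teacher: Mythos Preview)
Your argument is correct and is precisely the intended one: the paper in fact states this lemma without proof, treating it as immediate from the definition of a semilattice, and your unwinding of that definition ($f \und f = f \wedge f = f$ since $F$ is a subtomonoid whose product is the minimum) is exactly the content being invoked.
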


%
%

We conclude from Proposition \ref{prop:tomonoid-composition-tomonoid} that the translations $\lambda_f$, where $f$ belongs to the filter $F$, are mappings such that $\lambda_f \circ \lambda_f = \lambda_f$. We will call mappings of a set to itself with this property {\it idempotent} as well. We may describe the idempotent translations by the set of their fixpoints.

\begin{proposition} \label{prop:idempotent}
Let $({\mathcal L}; \leq)$ be an almost complete chain. Let $\lambda \colon {\mathcal L} \to {\mathcal L}$ be sup-preserving, shrinking, and idempotent. Let
\[ \fp{\lambda} \;=\; \{ a \in {\mathcal L} \colon \lambda(a) = a \}. \]
Then $\fp{\lambda}$ is a set $E \subseteq {\mathcal L}$ with the following properties:
\AxiomeAnfang
\Axiom{E1} $E$ is closed under suprema.

\Axiom{E2} Let $a \in E$ be such that $a = \sup\, \{ x \in {\mathcal L} \colon x < a \}$. Then $a = \sup\, \{ x \in E \colon x < a \}$.

\Axiom{E3} For each $a \in {\mathcal L}$ there is an $e \in E$ such that $e \leq a$.
\AxiomeEnde
Moreover, we have
\begin{equation} \label{fml:idempotent-from-fp}
\lambda(a) \;=\; \max \, \{ e \in \fp{\lambda} \colon e \leq a \}
\end{equation}
for any $a \in {\mathcal L}$. Conversely, let $E \subseteq {\mathcal L}$ fulfil {\rm (E1)}--{\rm (E3)}. Then the mapping
\begin{equation} \label{fml:idp-aus-fp}
\lambda \colon {\mathcal L} \to {\mathcal L} \komma a 
           \mapsto \max \, \{ e \in E \colon e \leq a \}
\end{equation}
is sup-preserving, shrinking, and idempotent. Moreover, $\fp{\lambda} = E$.
\end{proposition}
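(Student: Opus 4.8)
The plan is to treat the two implications separately and, in each, reduce everything to the three defining properties of $\lambda$ (sup-preserving, shrinking, idempotent) on one side and to the conditions (E1)--(E3) on the other.

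For the forward direction I would first dispatch the formula (\ref{fml:idempotent-from-fp}) together with (E3), as these are cheapest. For any $a \in {\mathcal L}$ the element $\lambda(a)$ lies in $\fp{\lambda}$ by idempotence and satisfies $\lambda(a) \leq a$ by shrinking, which already gives (E3) with $e = \lambda(a)$. To see that $\lambda(a)$ is the \emph{largest} fixpoint below $a$, take any $e \in \fp{\lambda}$ with $e \leq a$ and use that sup-preserving mappings are order-preserving: $e = \lambda(e) \leq \lambda(a)$, giving (\ref{fml:idempotent-from-fp}). Condition (E1) is immediate from sup-preservation, since if each $a_\iota$ is a fixpoint and $s = \bigvee_\iota a_\iota$, then $\lambda(s) = \bigvee_\iota \lambda(a_\iota) = \bigvee_\iota a_\iota = s$. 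For (E2), given a fixpoint $a$ with $a = \sup\{x : x < a\}$, I apply sup-preservation to this family to get $a = \lambda(a) = \bigvee_{x < a} \lambda(x)$; since each $\lambda(x)$ is a fixpoint with $\lambda(x) \leq x < a$, we obtain $a \leq \sup\{e \in \fp{\lambda} : e < a\} \leq a$, which is exactly (E2).

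For the converse, the first task is to check that (\ref{fml:idp-aus-fp}) is well defined. The set $\{e \in E : e \leq a\}$ is non-empty by (E3), so by almost completeness it has a supremum, which lies in $E$ by (E1) and is $\leq a$; hence this supremum is in fact the maximum, and $\lambda(a) \in E$ with $\lambda(a) \leq a$. Shrinking is then built in, monotonicity follows because enlarging $a$ enlarges the set over which the maximum is taken, and idempotence holds because $\lambda(a) \in E$ is its own largest fixpoint below itself. The equality $\fp{\lambda} = E$ is equally routine: $\lambda(e) = e$ for $e \in E$ since $e$ is then the maximum in question, and conversely any fixpoint $a = \lambda(a)$ lies in $E$ because $\lambda$ takes values in $E$.

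The one genuinely non-trivial step, and the part I expect to be the main obstacle, is showing that the $\lambda$ defined by (\ref{fml:idp-aus-fp}) is sup-preserving; this is the only place where (E2) is needed. Given a non-empty family with $s = \bigvee_\iota a_\iota$, monotonicity gives $\bigvee_\iota \lambda(a_\iota) \leq \lambda(s)$ at once, so the work is the reverse inequality. Writing $e_0 = \lambda(s) \in E$, I would split into two cases. If $e_0 \leq a_\iota$ for some $\iota$, then $e_0 \leq \lambda(a_\iota) \leq \bigvee_\iota \lambda(a_\iota)$ and we are done. Otherwise $a_\iota < e_0$ for all $\iota$, whence $s = \bigvee_\iota a_\iota \leq e_0 \leq s$ forces $s = e_0$ and exhibits $s$ as a supremum of elements strictly below it, that is, $s = \sup\{x : x < s\}$. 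Now (E2) applies to the fixpoint $e_0 = s \in E$ and yields $s = \sup\{e \in E : e < s\}$; but every such $e$ satisfies $e < s = \bigvee_\iota a_\iota$, hence $e \leq a_\iota$ for some $\iota$ and therefore $e \leq \lambda(a_\iota) \leq \bigvee_\iota \lambda(a_\iota)$. Taking the supremum over these $e$ gives $e_0 = s \leq \bigvee_\iota \lambda(a_\iota)$, which completes the argument.
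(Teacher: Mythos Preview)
Your proof is correct and follows essentially the same route as the paper's: both directions hinge on the identity $\lambda(a)=\max\{e\in E:e\leq a\}$, with (E1) coming from sup-preservation, (E3) from shrinking plus idempotence, and the only substantive work being (E2) in one direction and sup-preservation of (\ref{fml:idp-aus-fp}) in the other. Your treatment of (E2) via directly applying sup-preservation to the family $\{x:x<a\}$ is a bit cleaner than the paper's argument by contradiction, and your case split for the converse sup-preservation (on whether $e_0\leq a_\iota$ for some $\iota$) is organized slightly differently from the paper's (which splits on whether $a\in E$), but the content is the same.
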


\begin{proof}
Let $\lambda \colon {\mathcal L} \to {\mathcal L}$ be sup-preserving, shrinking, and idempotent. Let us first note that $\fp{\lambda}$ is exactly the image of $\lambda$. Indeed, each element of $\fp{\lambda}$ is of the form $\lambda(a)$ for some $a \in {\mathcal L}$. Conversely,
\begin{equation} \label{fml:image-in-fp}
\lambda(a) \in \fp{\lambda}
\end{equation}
for any $a \in {\mathcal L}$ because, in view of the idempotency of $\lambda$, we have $\lambda(\lambda(a)) = \lambda(a)$.

Assume now that $\lambda(a_\iota) = a_\iota$ for $a_\iota \in {\mathcal L}$, $\iota \in I$; then $\lambda(\bigvee_\iota a_\iota) = \bigvee_\iota \lambda(a_\iota) = \bigvee_\iota a_\iota$ and hence $\fp{\lambda}$ fulfils (E1). Next, let $a \in \fp{\lambda}$ be such that $a = \sup\, \{ x \in {\mathcal L} \colon x < a \}$. Assume that there is a $b < a$ in $\mathcal L$ such that $\fp{\lambda}$ does not contain any element $x$ such that $b < x < a$. Then for all $x$ such that $b < x < a$ we have $\lambda(x) \leq b$. Indeed, $\lambda(x) \leq x$ because $\lambda$ is shrinking, and $\lambda(x) \in \fp{\lambda}$ by (\ref{fml:image-in-fp}). Since $\lambda$ is sup-preserving, also $\lambda(a) \leq b < a$ holds and hence $a \notin \fp{\lambda}$. It follows that $\fp{\lambda}$ fulfils (E2). Finally, for $a \in {\mathcal L}$, $\lambda(a) \leq a$ is in $\fp{\lambda}$ and hence $\fp{\lambda}$ fulfils also (E3).

To show (\ref{fml:idempotent-from-fp}), let $a \in {\mathcal L}$. Again, $\lambda(a) \leq a$ because $\lambda$ is shrinking, and $\lambda(a) \in \fp{\lambda}$ by (\ref{fml:image-in-fp}). Let $e$ be the largest element with these two properties, that is, the largest element of $\fp{\lambda}$ less or equal to $a$. By (E1), such an element exists. We have to show $\lambda(a) = e$. Indeed, on the one hand $\lambda(a) \leq e$. On the other hand, $e \leq a$ implies $e = \lambda(e) \leq \lambda(a)$. Hence $\lambda(a) = e$ and the proof of the first half of the proposition is complete.

For the second half, let $E \subseteq {\mathcal L}$ fulfil (E1)--(E3). Then we can define $\lambda$ by (\ref{fml:idp-aus-fp}) thanks to (E3) and (E1). Clearly, $\lambda$ is shrinking and idempotent. It is also clear that $e \in E$ if and only if $\lambda(e) = e$, that is, if and only if $e \in \fp{\lambda}$. It is furthermore obvious that $\lambda$ is monotone. It remains to show that $\lambda$ is sup-preserving. Let $a_\iota \in {\mathcal L}$, $\iota \in I$, let $a = \bigvee_\iota a_\iota$, and assume $a_\iota < a$ for each $\iota$. Then either $a \in E$ and hence $\lambda(a) = a$. In this case, (E2) implies that $a = \sup\, \{ x \in E \colon x < a \}$ and we conclude that also $\bigvee_\iota \lambda(a_\iota) = a$. Or $a \notin E$; let then $b = \lambda(a)$. In this case, we have $b < a$ and we conclude that also $\bigvee_\iota \lambda(a_\iota) = b$.
\end{proof}

For a non-empty subset $A$ of an almost complete chain $R$, we denote by $A^\vee$ the set of all suprema of non-empty subsets of $A$.

\begin{theorem} \label{thm:mu}
Let $\mathcal E$ be the set of all subsets $E$ of $\mathcal L$ fulfilling {\rm (E1)}--{\rm (E3)}. Let
\begin{equation} \label{fml:mu}
\mu \colon F \to {\mathcal E} \komma f \mapsto \fp{\lambda_f}.
\end{equation}
Then $\mu$ is monotone; indeed we have $\mu(\bigvee_\iota f_\iota) = (\bigcup_\iota \mu(f_\iota))^\vee$ and $\mu(1) = {\mathcal L}$.
\end{theorem}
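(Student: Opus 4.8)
The plan is to verify three things: that $\mu$ is well-defined (its values lie in $\mathcal E$), that $\mu(1) = {\mathcal L}$, and the displayed supremum formula, from which the monotonicity of $\mu$ drops out. For well-definedness I would first invoke the preceding lemma: each $f \in F$ is idempotent in $\mathcal L$, so $\lambda_f \circ \lambda_f = \lambda_{f \und f} = \lambda_f$, i.e.\ $\lambda_f$ is an idempotent mapping. By Proposition \ref{prop:tomonoid-composition-tomonoid}, $\lambda_f$ is moreover sup-preserving and shrinking. Hence Proposition \ref{prop:idempotent} applies and shows that $\fp{\lambda_f}$ satisfies {\rm (E1)}--{\rm (E3)}, that is, $\fp{\lambda_f} \in {\mathcal E}$; so $\mu$ indeed maps $F$ into $\mathcal E$. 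The equality $\mu(1) = {\mathcal L}$ is immediate, since $\lambda_1 = \id_{\mathcal L}$ fixes every element of $\mathcal L$.

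For monotonicity I would argue directly on fixpoint sets. Suppose $f \leq g$ in $F$ and let $a \in \fp{\lambda_f}$. Then $a = f \und a \leq g \und a \leq a$, where the last inequality is negativity; hence $g \und a = a$ and $a \in \fp{\lambda_g}$. Thus $f \leq g$ implies $\mu(f) \subseteq \mu(g)$, so $\mu$ is monotone with respect to inclusion on $\mathcal E$.

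The main content is the supremum formula. Write $f = \bigvee_\iota f_\iota$; since $F$ is a filter and hence itself q.n.c., this supremum lies in $F$, so $\mu(f)$ is defined. For the inclusion $(\bigcup_\iota \mu(f_\iota))^\vee \subseteq \mu(f)$ I would use monotonicity: each $\mu(f_\iota) \subseteq \mu(f)$, whence $\bigcup_\iota \mu(f_\iota) \subseteq \mu(f)$, and as $\mu(f) = \fp{\lambda_f}$ is closed under suprema by {\rm (E1)}, every supremum of a non-empty subset of $\bigcup_\iota \mu(f_\iota)$ again lies in $\mu(f)$. For the reverse inclusion, let $a \in \fp{\lambda_f}$. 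Then, using quanticity,
\[ a \;=\; \lambda_f(a) \;=\; f \und a \;=\; (\bigvee_\iota f_\iota) \und a \;=\; \bigvee_\iota (f_\iota \und a) \;=\; \bigvee_\iota \lambda_{f_\iota}(a). \]
By (\ref{fml:image-in-fp}) each $\lambda_{f_\iota}(a)$ lies in $\fp{\lambda_{f_\iota}} = \mu(f_\iota)$, so $a$ is the supremum of a non-empty family of elements of $\bigcup_\iota \mu(f_\iota)$, whence $a \in (\bigcup_\iota \mu(f_\iota))^\vee$. Combining both inclusions yields the formula.

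I expect the only genuine obstacle to be the reverse inclusion of the supremum formula; the decisive observation there is that quanticity lets us split the fixpoint equation $f \und a = a$ into $\bigvee_\iota (f_\iota \und a) = a$, each summand being automatically a fixpoint of the corresponding $\lambda_{f_\iota}$ by (\ref{fml:image-in-fp}). The remaining steps are routine applications of negativity, {\rm (E1)}, and the already established monotonicity.
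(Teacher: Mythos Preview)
Your proof is correct and follows essentially the same route as the paper's: monotonicity from the shrinking property, the forward inclusion from monotonicity together with {\rm (E1)}, and the reverse inclusion by writing $a = \bigvee_\iota \lambda_{f_\iota}(a)$ and using that each $\lambda_{f_\iota}(a)$ is a fixpoint of $\lambda_{f_\iota}$. You are simply more explicit about well-definedness and invoke quanticity directly where the paper phrases the same step in terms of the pointwise supremum $\lambda = \bigvee_\iota \lambda_{f_\iota}$.
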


\begin{proof}
From the fact that each translation is shrinking we conclude that $\mu$ is monotone. Moreover, clearly $\mu(1) = {\mathcal L}$.

Let $\lambda_\iota$, $\iota \in I$, be translations of $\mathcal L$, and let $\lambda = \bigvee_\iota \lambda_\iota$. Then $\fp{\lambda_\iota} \subseteq \fp{\lambda}$ for each $\iota$ and, since $\fp{\lambda}$ is closed under suprema, $(\bigcup_\iota \fp{\lambda_\iota})^\vee \subseteq \fp{\lambda}$. Conversely, let $a \in \fp{\lambda}$. Then we have $\bigvee_\iota \lambda_\iota(a) = \lambda(a) = a$. Since $\lambda_\iota(a) \in \fp{\lambda_\iota}$ for any $\iota$, we conclude that $a \in (\bigcup_\iota \fp{\lambda_\iota})^\vee$.
\end{proof}

We summarise that the translations by the elements of the filter $F$ are fully specified by the map $\mu$ in (\ref{fml:mu}), which maps $F$ into the set $\mathcal E$ of sets fulfilling the conditions (E1)--(E3).

The map $\mu$, however, can be complicated. We do not further discuss the general case; we rather make a simplifying assumption. Let us define the following condition:
\AufzAnfang
\Nummer{G} For each $F$-class $R$ that is not a singleton, there is a bijection $\nu_R \colon F \to R$ such that one of the following possibilities applies: either $\nu_R$ is order-preserving and, for any $f \in F$,
\[ \fp{\lambda_f} \cap R \;=\; \{ r \in R \colon r \leq \nu_R(f) \} \]
or $\nu_R$ is order-reversing and, for any $f \in F$,
\[ \fp{\lambda_f} \cap R
   \;=\; \{ r \in R \colon r > \nu_R(f) \text{ or $r$ is the smallest element of $R$} \}. \]
\AufzEnde

We note that we have considered in \cite{Vet1} a condition similar to (G), in order to define the so-called regular l.-c.\ t-norms.

\begin{definition} \label{def:Goedel-composition-tomonoids}
\AufzAnfang
\Nummer{i} Let $(R; \leq)$ be a chain. Let $\Phi^\text{G\" o}$ consist of the functions $\lambda_t \colon R \to R \komma x \mapsto x \wedge t$ for each $t \in R$. Then the composition tomonoid $(\Phi^\text{G\" o}; \leq, \circ, \id_R)$ is called {\it G\" odel}.
\Nummer{ii} Let $(R; \leq)$ be a chain with a bottom element $0$. Let $\Phi^\text{rG}$ consist of the functions
\[ \lambda_t \colon R \to R \komma x \mapsto
   \begin{cases} 0 & \text{if $x \leq t$,} \\
                 x & \text{if $x > t$}
   \end{cases} \]
for each $t \in R$. Then the composition tomonoid $(\Phi^\text{rG}; \leq, \circ, \id_R)$ is called {\it reversed G\" odel}.
\AufzEnde
\end{definition}

We assume from now on that condition (G) is fulfilled. 

\begin{theorem} \label{thm:Goedel-Dreiecke}
Let $R \in {\mathcal P}$ not be a singleton. Then $\Lambda^R$ is either a G\" odel or a reversed G\" odel composition tomonoid.
\end{theorem}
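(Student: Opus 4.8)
The plan is to exploit that, by the preceding lemma, each $f \in F$ is idempotent, so that the translation $\lambda_f$ — and hence its restriction $\lambda_f^R \colon R \to R$, which is well defined and, by Lemma \ref{lem:coextension-Dreiecke}, shrinking, idempotent, and order-preserving — is completely determined by its set of fixpoints. The key observation is that the short derivation of (\ref{fml:idempotent-from-fp}) in Proposition \ref{prop:idempotent} uses \emph{only} that the map is shrinking, idempotent, and monotone (no completeness of $R$ is needed): $\lambda_f^R(a)$ lies below $a$, is itself a fixpoint, and dominates every fixpoint $\leq a$ by monotonicity. I would therefore record that
$\lambda_f^R(a) = \max \{ e \in \fp{\lambda_f^R} \colon e \leq a \}$ for all $a \in R$, where $\fp{\lambda_f^R} = \fp{\lambda_f} \cap R$. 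Condition (G) hands these fixpoint sets to me explicitly, so the whole argument reduces to substituting them into this formula and reading off the resulting maps.

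Next I would split into the two alternatives offered by (G). In the order-preserving case $\fp{\lambda_f^R} = \{ r \in R \colon r \leq \nu_R(f) \}$, and evaluating the max formula gives $\lambda_f^R(a) = a \wedge \nu_R(f)$ for every $a$: the largest fixpoint $\leq a$ is $a$ itself when $a \leq \nu_R(f)$, and is $\nu_R(f)$ otherwise. Thus each $\lambda_f^R$ is the Gödel map $x \mapsto x \wedge \nu_R(f)$. In the order-reversing case $\fp{\lambda_f^R} = \{ r \in R \colon r > \nu_R(f) \}$ together with the smallest element $0_R$ of $R$; the same substitution yields $\lambda_f^R(a) = a$ when $a > \nu_R(f)$ and $\lambda_f^R(a) = 0_R$ when $a \leq \nu_R(f)$, which is precisely the reversed Gödel map with parameter $\nu_R(f)$ and bottom element $0_R$.

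To upgrade ``each member of $\Lambda^R$ has the right shape'' to ``$\Lambda^R$ \emph{is} (reversed) Gödel'', I would invoke that $\nu_R \colon F \to R$ is a bijection: as $f$ ranges over $F$ the parameter $t = \nu_R(f)$ ranges over all of $R$, so $\Lambda^R = \{ \lambda_f^R \colon f \in F \}$ coincides exactly with $\Phi^\text{G\" o}$ or with $\Phi^\text{rG}$ of Definition \ref{def:Goedel-composition-tomonoids} (over the chain $R$ itself, via the identity c-isomorphism).

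The one point that needs care — the main obstacle — is the order-reversing case, where Definition \ref{def:Goedel-composition-tomonoids}(ii) presupposes that $R$ has a bottom element. I would verify this rather than assume it: since $\lambda_1^R = \id_R$ fixes all of $R$, the prescription of (G) for $f = 1$ can return the whole of $R$ only if $R$ possesses a least element $0_R$, forcing $\nu_R(1) = 0_R$. This simultaneously confirms that the ``smallest element'' clause of (G) is non-vacuous in this case and that the prescribed fixpoint sets genuinely meet the conditions (E1)–(E3) required by Proposition \ref{prop:idempotent}, so that no inconsistency arises.
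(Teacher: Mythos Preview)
Your proof is correct and follows essentially the same route as the paper: invoke the fixpoint formula (\ref{fml:idempotent-from-fp}) for the idempotent translations, plug in the fixpoint sets supplied by condition (G), and read off the G\"odel resp.\ reversed G\"odel maps. Your additional remarks --- that the max-formula needs only shrinking, idempotent, monotone (not completeness of $R$), and that the order-reversing case forces $R$ to have a bottom element via $f=1$ --- are welcome clarifications of points the paper leaves implicit.
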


\begin{proof}
By Proposition \ref{prop:idempotent}, we have for each $f \in F$ and $r \in R$ that $\lambda_f(r) = \max \, \{ e \in \fp{\lambda_f} \colon e \leq r \}$.

Let $\nu_R \colon F \to R$ be the bijection according to (G). Assume first that $\nu_R$ is order-preserving. Then $\fp{\lambda_f} \cap R = \{ r \in R \colon r \leq \nu_R(f) \}$. Hence $\lambda_f(r) = r$ if $r \leq \nu_R(f)$, and $\lambda_f(r) = \nu_R(f)$ otherwise. We conclude that $\Lambda^R$ is a G\" odel composition tomonoid.

Assume now that $\nu_R$ is order-reversing. Then $R$ has a smallest element $b$. For $f \in F$, we then have $\fp{\lambda_f} \cap R = \{ r \in R \colon r > \nu_R(f) \} \cup \{b\}$. Hence $\lambda_f(r) = b$ if $r \leq \nu_R(r)$, and $\lambda_f(r) = r$ otherwise. Thus $\Lambda^R$ is a reversed G\" odel composition tomonoid.
\end{proof}

The G\" odel and the reversed G\" odel composition tomonoid are schematically depicted in Figure \ref{fig:Goedel-Dreiecksalgebren}.

\begin{figure}[ht!]
\begin{center}
\includegraphics[width=0.5\textwidth]{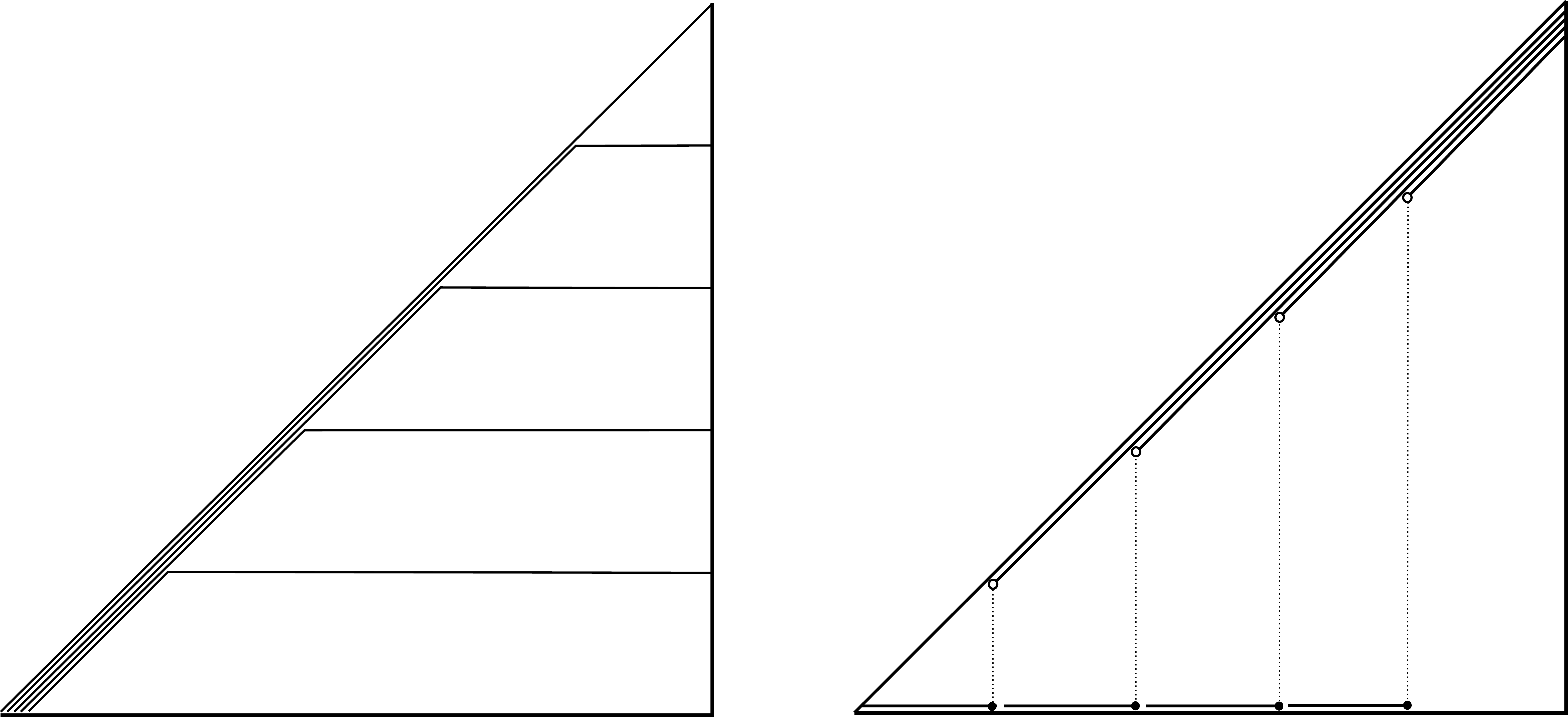}
\caption{The G\" odel and reversed G\" odel composition tomonoid, where the base set is a real interval.}
\label{fig:Goedel-Dreiecksalgebren}
\end{center}
\end{figure}

We note next that, as part of condition (G), the homomorphism from the filter to the composition tomonoids are in the present case provided by construction. Thus we may proceed by specifying the translations by the elements not belonging to the filter.

We will see that, for two $F$-classes $R$ and $S$, $\,\Lambda^{R,S}$ is again largely determined by $\Lambda^R$ and $\Lambda^S$. By Theorem \ref{thm:Goedel-Dreiecke}, there are essentially only two possibilities how each of the latter composition tomonoids may look like. In addition, we may make a distinction according to the presence of border elements. Indeed, the base set of a G\" odel composition tomonoid has a smallest element if and only if $F$ has; similarly, the base set of a reversed G\" odel composition tomonoid has a largest element if and only if $F$ has.

Let us compile the impossible and the trivial cases. The only impossible case is seen from the following proposition.

\begin{proposition} \label{prop:impossible-Goedel-LambdaRS}
Let $R, T \in {\mathcal P}$, where $T < F$, be $\und$-maximal, let $S = R \und T$, and assume that neither $R$ nor $S$ is a singleton. If $\Lambda^R$ is reversed G\" odel and $\Lambda^S$ is G\" odel, $S$ possesses a smallest element.
\end{proposition}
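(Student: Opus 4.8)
The plan is to read off the conclusion almost directly from the structural constraints recorded in Lemma~\ref{lem:coextension-Quadrate}(i), after first extracting from the hypothesis on $\Lambda^R$ the single fact that $R$ has a smallest element.

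First I would note that, by Definition~\ref{def:Goedel-composition-tomonoids}(ii), a reversed G\"odel composition tomonoid is built on a chain with a bottom element; equivalently, inspecting the proof of Theorem~\ref{thm:Goedel-Dreiecke}, the reversed G\"odel case is precisely the one in which the bijection $\nu_R \colon F \to R$ supplied by condition (G) is order-reversing. In that case $\nu_R(1)$ is the smallest element of $R$, since $1$ is the top element of the filter $F$ and $\nu_R$ reverses the order. Hence $\Lambda^R$ being reversed G\"odel forces $R$ to possess a smallest element $b = \inf R$, so in particular $b \in R$.

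Next I would invoke the notation and hypotheses of Lemma~\ref{lem:coextension-Quadrate}(i). Since $R, T$ is $\und$-maximal and neither $R$ nor $S$ is a singleton, writing $b = \inf R$, $d = \sup R$, $p = \inf S$, $q = \sup S$, we are in the case $b < d$ and $p < q$ treated there. Lemma~\ref{lem:coextension-Quadrate}(i) then yields the implication that $b \in R$ forces $p \in S$. As we have just argued that $b \in R$, we obtain $p \in S$; that is, $p = \inf S$ is attained and is therefore the smallest element of $S$, which is exactly the claim.

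The proof is thus essentially a bookkeeping exercise, the substance being contained in the already-cited Lemma~\ref{lem:coextension-Quadrate}. The only point requiring genuine care is the first step, namely confirming that the reversed G\"odel shape of $\Lambda^R$ really does produce a bottom element of $R$ (via the order-reversing $\nu_R$ together with the presence of $1$ in $F$). I expect no further obstacle: in particular, the hypothesis that $\Lambda^S$ is G\"odel is not needed for the conclusion and serves only to delimit the case under consideration, which the subsequent development will go on to exclude.
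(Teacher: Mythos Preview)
Your argument is correct and follows the same route as the paper: the paper's one-line proof simply observes that, by Lemma~\ref{lem:coextension-Quadrate}(i), if $R$ has a smallest element then so does $S$, leaving implicit (as evident from Definition~\ref{def:Goedel-composition-tomonoids}(ii)) that the reversed G\"odel case forces a bottom element of $R$. Your observation that the G\"odel hypothesis on $\Lambda^S$ is not actually used for this conclusion is also accurate.
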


\begin{proof}
By Lemma \ref{lem:coextension-Quadrate}(i), if $R$ possesses a smallest element, so does $S$.
\end{proof}

Again, we call $\Lambda^{R,S}$ trivial if this set contains a single constant mapping, its value being the smallest element of $S$.

\begin{proposition} \label{prop:trivial-Goedel-LambdaRS}
Let $R, T \in {\mathcal P}$ be such that $T < F$ and let $S = R \und T$. Then $\Lambda^{R,S}$ is trivial in each of the following cases:
\AufzAnfang
\Nummer{i} The pair $R, T$ is not $\und$-maximal.

\Nummer{ii} $R$ is a singleton.

\Nummer{iii} $S$ is a singleton.

\Nummer{iv} $\Lambda^R$ is reversed G\" odel and $\Lambda^S$ is G\" odel.
\AufzEnde
\end{proposition}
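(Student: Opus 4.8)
The plan is to mirror the proof of the Archimedean counterpart, Proposition \ref{prop:trivial-LambdaRS}: I dispose of (i)--(iii) by a direct appeal to Lemma \ref{lem:coextension-Quadrate} and concentrate the real work on (iv). For (i), if $R, T$ is not $\und$-maximal, then Lemma \ref{lem:coextension-Quadrate}(ii) immediately gives that $S$ has a smallest element $p$ and that $\lambda^{R,S}_t = \constant{R}{p}$ for every $t \in T$; hence $\Lambda^{R,S} = \{\constant{R}{p}\}$ is trivial. For (ii), if $R$ is a singleton, I combine the case just treated with Lemma \ref{lem:coextension-Quadrate}(i): in the $\und$-maximal case the condition $b = d$ forces $p \in S$ and $\lambda^{R,S}_t(b) = p$ for all $t$, so $\Lambda^{R,S}$ reduces to the single map with value $p = \min S$. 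For (iii), if $S = \{p\}$ is a singleton, then $\constant{R}{p}$ is the only mapping $R \to S$ whatsoever. In each of (i)--(iii) the common value is $\min S$, as the definition of triviality demands.

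The substance lies in (iv). Using (i)--(iii) I may assume that $R, T$ is $\und$-maximal and that neither $R$ nor $S$ is a singleton, placing us in the branch $b < d$, $p < q$ of Lemma \ref{lem:coextension-Quadrate}(i), where $b = \inf R$, $d = \sup R$, $p = \inf S$, $q = \sup S$. Since $\Lambda^R$ is reversed G\"odel, Theorem \ref{thm:Goedel-Dreiecke} together with condition (G) shows that $R$ has a smallest element $b$ and that, for each $f \in F$, the map $\lambda^R_f$ sends every $r \le \nu_R(f)$ to $b$ and fixes every $r > \nu_R(f)$. By Proposition \ref{prop:impossible-Goedel-LambdaRS} (equivalently, by the implication $b \in R \Rightarrow p \in S$ in Lemma \ref{lem:coextension-Quadrate}(i)), $S$ has a smallest element $p \in S$; as $\Lambda^S$ is G\"odel this forces $F$ to have a smallest element $m$, whence $R$ acquires a largest element $d = \nu_R(m) \in R$, while $\lambda^S_f$ acts as $s \mapsto s \wedge \nu_S(f)$ with $\nu_S$ order-preserving and $\nu_S(m) = p$.

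The key step is to show, via the commuting identity (\ref{fml:commuting-action}), that every $\xi = \lambda^{R,S}_t \in \Lambda^{R,S}$ is the constant map $\constant{R}{p}$. We already have $\xi(b) = p$ by Lemma \ref{lem:coextension-Quadrate}(i)(a). Fix $r \in R$ with $b < r < d$ and set $f_r = \nu_R^{-1}(r)$, so that $\nu_R(f_r) = r$ and therefore $\lambda^R_{f_r}(r) = b$; then (\ref{fml:commuting-action}) yields
\[ \xi(r) \wedge \nu_S(f_r) \;=\; \lambda^S_{f_r}(\xi(r)) \;=\; \xi\bigl(\lambda^R_{f_r}(r)\bigr) \;=\; \xi(b) \;=\; p. \]
Since $r < d = \nu_R(m)$ and $\nu_R$ is order-reversing, $f_r > m$, so $\nu_S(f_r) > \nu_S(m) = p$; the displayed equality then forces $\xi(r) = p$. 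Thus $\xi$ is constantly $p$ on $[b,d)$, and because $\xi$ is sup-preserving (again Lemma \ref{lem:coextension-Quadrate}(i)(a)) and $d = \sup\{r \in R \colon r < d\}$ by density of the real interval $R$, also $\xi(d) = p$. Hence $\xi = \constant{R}{p}$ and $\Lambda^{R,S} = \{\constant{R}{p}\}$ is trivial.

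I expect this last step to be the only genuine obstacle. In the Archimedean setting the analogous triviality dropped straight out of clause (c) of Lemma \ref{lem:coextension-Quadrate}(i); here clause (c) only identifies $\constant{R}{p}$ as the \emph{bottom} element of $\Lambda^{R,S}$, so one must actually exploit the commuting relation to exclude any strictly larger mapping. The commuting relation pins down the values of $\xi$ everywhere below the top point, and it is the density of $R$ as a real interval, fed into sup-preservation, that recovers the value at $d$ and completes the collapse to a single constant.
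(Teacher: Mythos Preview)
Your proof is correct. Parts (i)--(iii) match the paper verbatim. For (iv) both arguments reduce to the same setting and both derive the constancy of $\lambda^{R,S}_t$ from the commuting identity (\ref{fml:commuting-action}), but they exploit the two halves of the reversed G\"odel action in opposite ways. The paper argues by contradiction: assuming $\lambda_t(r) > p$ at some point, it chooses $f$ small (via $\nu_S$) so that $\nu_S(f) < \lambda_t(r)$, then passes to a point $r' > \nu_R(f)$ lying in the \emph{fixpoint} region of $\lambda^R_f$, where $\lambda_f(r') = r'$; the commuting identity then forces $\lambda_f(\lambda_t(r')) \ne \lambda_t(\lambda_f(r'))$. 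You instead work directly: for each $r < d$ you pick $f_r = \nu_R^{-1}(r)$ so that $r$ lies in the \emph{collapse} region of $\lambda^R_{f_r}$, sending $r$ to $b$; the commuting identity then gives $\xi(r) \wedge \nu_S(f_r) = p$ with $\nu_S(f_r) > p$, hence $\xi(r) = p$, and sup-preservation plus density of $R$ handles the top point $d$. Your route requires the extra observation that $F$ has a smallest element (forced by $\nu_S$ being an order-preserving bijection onto $S$, which has a minimum), which the paper's argument sidesteps; in exchange you avoid the contradiction setup and the auxiliary point $r'$.
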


\begin{proof}
(i) holds by Lemma \ref{lem:coextension-Quadrate}(ii).

(ii) holds by Lemma \ref{lem:coextension-Quadrate}(i),(ii).

(iii) is obvious.

(iv) By part (i)--(iii), it suffices to show the claim in the case that $R, T$ is a $\und$-maximal pair and neither $R$ nor $S$ is a singleton. Then, by Proposition \ref{prop:impossible-Goedel-LambdaRS}, $S$ possesses a smallest element $p$.

Let $r \in R$ and assume that, for some $t \in T$, $\lambda^{R,S}_t(r) = \lambda_t(r) > p$. Let $f \in F$ be small enough, but not the smallest element of $F$, such that $\nu_S(f) < \lambda_t(r)$. Then $\nu_R(f)$ is not the largest element of $R$; hence there is an $r' \in R$ be such that $r' \geq r$ and $r' > \nu_R(f)$. Then $\lambda_f(\lambda_t(r')) = \nu_S(f) < \lambda_t(r')$ because $\lambda_t(r') \geq \lambda_t(r) > \nu_S(f)$, but $\lambda_t(\lambda_f(r')) = \lambda_t(r')$ because $r' > \nu_R(f)$ and hence $\lambda_f(r') = r'$. This contradicts the commutativity of $\lambda_f$ and $\lambda_t$ and we conclude that $\lambda^{R,S}_t = \constant{R}{p}$.
\end{proof}

We determine now the sets of mappings $\Lambda^{R,S}$ in the non-trivial cases. As in the previous section, we will use unit coordinates. In view of (G), this means that we will identify $R$ with one of $(0,1]$ or $[0,1]$ if $\Lambda^R$ is G\" odel, and with one of $[0,1)$ or $[0,1]$ if $\Lambda^R$ is reversed G\" odel; similarly for $S$. We furthermore assume that the mapping $\nu_R$ is $(0,1] \to (0,1], \, x \mapsto x$ or $(0,1] \to [0,1), \, x \mapsto 1-x$, respectively, if $F$ has no smallest element, and similarly for the case that $F$ has a smallest element as well as for $\nu_S$.

\begin{theorem} \label{thm:Goedel-LambdaRS}
Let $R, T \in {\mathcal P}$, where $T < F$, be $\und$-maximal, let $S = R \und T$, and assume that neither $R$ nor $S$ is a singleton.
\AufzAnfang
\Nummer{i} Let $\Lambda^R$ and $\Lambda^S$ be G\" odel. Then, under the identification of $R$ and $S$ with $(0,1]$ or $[0,1]$, $\;\Lambda^{R,S}$ consists of the mappings
\[ \lambda_z \colon R \to S
             \komma r \mapsto r \wedge z, \]
where $z \in \{ s \in S \colon s \leq m \}$ for some $m \in S$.
\Nummer{ii} Let $\Lambda^R$ be G\" odel and let $\Lambda^S$ be reversed G\" odel. Then, under the identification of $R$ and $S$ with $[0,1]$ or $(0,1]$ and $[0,1]$ or $[0,1)$, respectively, $\Lambda^{R,S}$ consists of the mappings
\[ \lambda_z \colon R \to S
             \komma r \mapsto \begin{cases} 0 & \text{if $r \leq 1-z$,} \\
                                            z & \text{if $r > 1-z$,}
                              \end{cases} \]
where $z \in S'$ for some $S' \subseteq S$ containing $0$.
\Nummer{iii} Let $\Lambda^R$ and $\Lambda^S$ be reversed G\" odel. Then, under the identification of $R$ and $S$ with $[0,1)$ or $[0,1]$, $\;\Lambda^{R,S}$ consists of the mappings
\[ \lambda_z \colon R \to S
             \komma r \mapsto \begin{cases} 0 & \text{if $r \leq z$,} \\
                                            r & \text{if $r > z$,}
                              \end{cases} \]
where either $z \in S$ or $z \in \{ s \in S \colon s \leq m \}$ for some $m \in S$.
\AufzEnde
\end{theorem}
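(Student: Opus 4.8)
The plan is to reduce each part to Lemma~\ref{lem:Xi} and then to solve the commutation equation $\xi \circ \rho_R(f) = \rho_S(f) \circ \xi$ (for $f \in F$) for maps $\xi \colon R \to S$. First I would record, in unit coordinates, the homomorphisms fixed by condition (G) together with Theorem~\ref{thm:Goedel-Dreiecke}: a G\"odel class gives $\rho_R(f) \colon r \mapsto r \wedge f$ (with $\nu_R = \id$), while a reversed G\"odel class gives $\rho_R(f) \colon r \mapsto 0$ for $r \leq 1-f$ and $r \mapsto r$ for $r > 1-f$ (with $\nu_R \colon f \mapsto 1-f$), and symmetrically for $\rho_S$. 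I would also note that whether $R$ or $S$ carries a border element is governed solely by whether $F$ has a smallest element, so $R$ and $S$ have matching ``closedness''; this is exactly what guarantees that the maps $\lambda_z$ produced below really take their values in $S$. By Lemma~\ref{lem:Xi} it then suffices to compute $\Xi^{R,S}$ and to read off its downward-closed subsets.

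Whenever $\Lambda^R$ is G\"odel, i.e.\ in parts (i) and (ii), the equation pins $\xi$ down completely. A G\"odel class has a largest element (the top $1$ in unit coordinates), and $\rho_R(f)(1) = 1 \wedge f = f$; substituting $r = 1$ therefore yields $\xi(f) = \rho_S(f)(z)$ for every $f$, where $z := \xi(1)$. As $f$ ranges over all of $F$ this reads off $\xi$ at every point, which is precisely the ``determined by a single value'' phenomenon of Figure~\ref{fig:Xi}. Evaluating $\rho_S(f)(z)$ gives $\xi = \lambda_z$ in the stated shape ($r \mapsto r \wedge z$ when $\Lambda^S$ is G\"odel, and the threshold map $r \mapsto 0$ or $z$ when $\Lambda^S$ is reversed G\"odel), and conversely each such $\lambda_z$ is routinely checked to commute. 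Hence $\Xi^{R,S} = \{\lambda_z \colon z \in S\}$ is a chain; since $R$ has a largest element, the first alternative of Lemma~\ref{lem:Xi} applies, so $\Lambda^{R,S}$ is the whole chain or an initial segment $\{z \leq m\}$ of it. These are exactly the families displayed in (i) and (ii).

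The genuinely delicate case is (iii), where both $\Lambda^R$ and $\Lambda^S$ are reversed G\"odel. Here $\rho_R(f)$ produces only the values $0$ and $r$, so the commutation equation links $\xi$ at $0$ and at $r$ but never relates $\xi$ at two distinct non-zero arguments. Splitting according to $f \leq 1-r$ and $f > 1-r$ first forces $\xi(0) = 0$ and then $\xi(r) \in \{0, r\}$ for every $r$; however, \emph{every} $\{0,r\}$-valued map with $\xi(0) = 0$ satisfies the equation, so $\Xi^{R,S}$ is vastly larger than the chain we want. This is the main obstacle: the commutation relation alone does not recover the threshold maps. The missing ingredient is that the members of $\Lambda^{R,S}$ are sup-preserving, hence order-preserving, by Lemma~\ref{lem:coextension-Quadrate}(i)(a), and among $\{0,r\}$-valued maps the monotone ones are precisely the thresholds $\lambda_z \colon r \mapsto 0$ for $r \leq z$, $r \mapsto r$ for $r > z$. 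These form a chain, and the border elements then decide the endgame: if $F$ has a smallest element, then $R$ (and $S$) have a largest element and the first alternative of Lemma~\ref{lem:Xi} gives all of $S$ or a one-sided segment; if $F$ has no smallest element, then neither $R$ nor $S$ has a largest element while $S$ keeps its bottom $p = 0$, so the second alternative applies and the constant map $\constant{R}{0}$, which in this case is not of the form $\lambda_z$, must be removed, again leaving a one-sided family of $\lambda_z$ as recorded in the statement.
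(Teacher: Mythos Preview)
Your computation of the commutation solutions is fine in all three parts, including the key observation in (iii) that sup-preservation (Lemma~\ref{lem:coextension-Quadrate}(i)(a)) is what cuts the $\{0,r\}$-valued maps down to the threshold family. The problem is the second step, where you invoke Lemma~\ref{lem:Xi} to conclude that $\Lambda^{R,S}$ is a downward-closed initial segment of $\Xi^{R,S}$.

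Lemma~\ref{lem:Xi} lives in Section~\ref{sec:Archimedean-coextensions} under the standing assumption that $F$ is Archimedean; it is not available in the semilattice setting of Section~\ref{sec:semilattice-coextensions}, and its conclusion is in fact \emph{false} there. Look again at the statement of part~(ii): it says $z \in S'$ for an arbitrary subset $S' \subseteq S$ containing $0$, not an initial segment. The example $\und_4$ in Section~\ref{sec:examples} realises precisely this: two of the $\Lambda^{R,S}$ in the G\"odel--reversed~G\"odel configuration contain exactly two maps and are explicitly noted not to be downward closed. So your sentence ``$\Lambda^{R,S}$ is the whole chain or an initial segment $\{z \leq m\}$ of it. These are exactly the families displayed in (i) and (ii)'' is wrong for~(ii); you have proved something that contradicts the very statement you are aiming at.

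The paper's own argument bypasses Lemma~\ref{lem:Xi} and works directly from Lemma~\ref{lem:coextension-Quadrate}. For (i), part~(i)(b) of that lemma gives a top element $\lambda_m \in \Lambda^{R,S}$, and then closure under post-composition with $\Lambda^S$ does the rest: $\gamma_f \circ \lambda_m = \lambda_{f \wedge m}$, so every $z \leq m$ appears. The analogous composition handles~(iii). In case~(ii), however, $\beta_{1-f} \circ \lambda_z$ returns only $\lambda_z$ itself or $\lambda_0$, never an intermediate $\lambda_{z'}$; that is why nothing more than $0 \in S'$ can be claimed, and the paper's proof accordingly establishes exactly that and stops. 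To repair your argument you should replace the appeal to Lemma~\ref{lem:Xi} with this direct use of Lemma~\ref{lem:coextension-Quadrate}, and in~(ii) prove only what the statement actually asserts.
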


\begin{proof}
We assume that $F$ does not possess a smallest element; the procedure is otherwise similar. Using unit coordinates, we identify $F$ with $(0,1]$.

(i) $\Lambda^R$ and $\Lambda^S$ are both $\{ \gamma^{(0,1]}_f \colon f \in (0,1] \}$, where $\gamma^{(0,1]}_f \colon (0,1] \to (0,1] \komma x \mapsto x \wedge f$. According to (\ref{fml:commuting-action}), we have to determine the mappings $\lambda \colon (0,1] \to (0,1]$ such that
\[ \lambda \circ \gamma^{(0,1]}_f \;=\; 
   \gamma^{(0,1]}_f \circ \lambda, \quad f \in (0,1]. \]
That is, $\lambda(r \wedge f) = \lambda(r) \wedge f$ for $r, f \in (0,1]$. Setting $z = \lambda(1)$, we conclude that the solutions are $\lambda(r) = z \wedge r$ for any $z \in (0,1]$.

It follows that each element of $\Lambda^{R,S}$ is of the form $\lambda_z$ for some $z \in (0,1]$. Moreover, by Lemma \ref{lem:coextension-Quadrate}(i)(b), there is a largest $m \in (0,1]$ such that $\lambda_m \in \Lambda^{R,S}$. As $\gamma^{(0,1]}_f \circ \lambda_m \in \Lambda^{R,S}$ for any $f \in (0,1]$ as well, we have that $\lambda_z \in \Lambda^{R,S}$ for all $z \leq m$. The proof of part (i) is complete.

(ii) $\Lambda^R$ is $\{ \gamma^{(0,1]}_f \colon f \in (0,1] \}$ and $\Lambda^S$ is $\{ \beta^{[0,1)}_{1-f} \colon f \in (0,1] \}$. Here, for $t \in [0,1)$, we define
\[ \beta^{[0,1)}_t \colon [0,1) \to [0,1) \komma
x \mapsto \begin{cases} 0 & \text{if $x \leq t$,} \\
x & \text{otherwise.} \end{cases} \]
We have to find $\lambda \colon (0,1] \to [0,1)$ such that
\[ \lambda \circ \gamma^{(0,1]}_f \;=\; 
   \beta^{[0,1)}_{1-f} \circ \lambda, \quad f \in (0,1]. \]
This means $\beta^{[0,1)}_{1-f}(\lambda(r)) = \lambda(f \wedge r)$ for any $r, f \in (0,1]$. Setting $z = \lambda(1)$, we conclude that the solutions are
\[ \lambda(r) = \beta^{[0,1)}_{1-r}(z) =
\begin{cases} 0 & \text{if $r \leq 1-z$,} \\
z & \text{otherwise} \end{cases} \]
for any $z \in [0,1)$.

Hence $\Lambda^{R,S}$ consists of mappings of the form $\lambda_z$, $z \in [0,1)$. For any $z \in (0,1)$, we have $\lambda_z \circ \gamma^{(0,1]}_f = \constant{(0,1]}{0}$ if $f \leq 1-z$, hence $\lambda_0 \in \Lambda^{R,S}$.

(iii) $\Lambda^R$ and $\Lambda^S$ are both $\{ \beta^{[0,1)}_{1-f} \colon f \in (0,1] \}$. We have to find $\lambda \colon [0,1) \to [0,1)$ such that
\[ \lambda \circ \beta^{[0,1)}_{1-f} \;=\; \beta^{[0,1)}_{1-f} \circ \lambda,
   \quad f \in (0,1].\]
We conclude that, for each $r \in (0,1)$, either $\lambda(r) = 0$ or $\lambda(r) = r$. Furthermore, each $\lambda \in \Lambda^{R,S}$ is sup-preserving. We conclude that $\lambda = \beta^{[0,1)}_z$ for some $z \in [0,1)$. The rest follows similarly as in part (i).
\end{proof}

An illustration of Theorem \ref{thm:Goedel-LambdaRS} can be found in Figure \ref{fig:Goedel-Quadratalgebren}.

\begin{figure}[ht!]
\begin{center}
\includegraphics[width=0.45\textwidth]{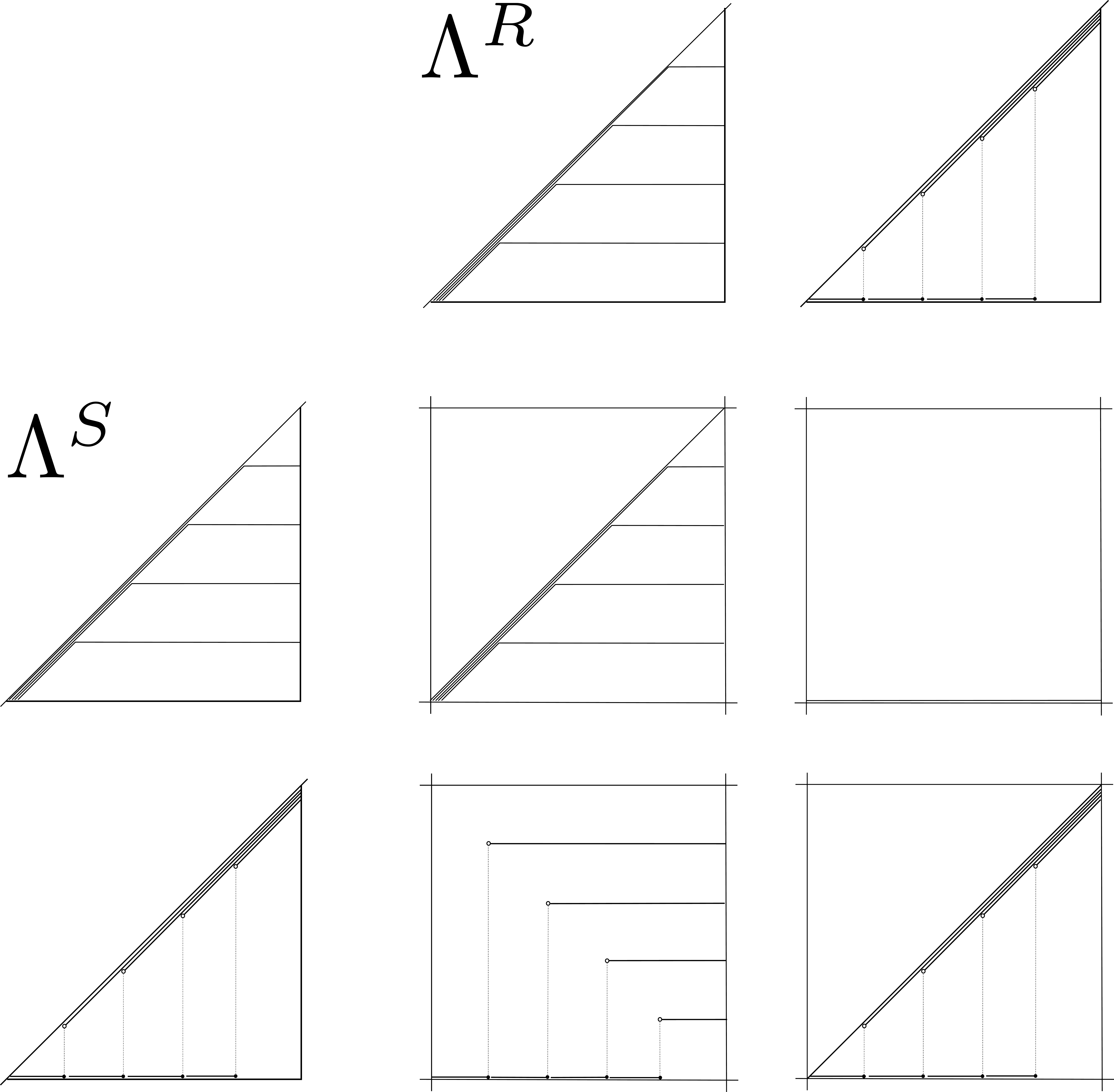}
\caption{A qualitative view on the sets $\Lambda^{R,S}$ depending on $\Lambda^R$ and $\Lambda^S$, in the context of a semilattice real coextension. $\Lambda^{R,S}$ is a subset of the indicated set of mappings, respectively.}
\label{fig:Goedel-Quadratalgebren}
\end{center}
\end{figure}

\section{Examples}
\label{sec:examples}

We are finally in the position to present a number of examples, showing how the theory described in the previous two sections complies with our actual aims, the construction of left-continuous t-norms. To this end, we investigate Archimedean and semilattice coextensions of specific q.n.c.\ tomonoids, choosing the congruence classes such that the new universe is order-isomorphic to the closed real unit interval.

The construction of t-norms has been quite an active research field and has often been motivated by geometric considerations. Apart from the simplest way of generating t-norms from given ones, the ordinal sum, we can mention the rotation, rotation-annihilation, and triple rotation construction \cite{Jen1,Jen2,MaBa1,MaBa2} as well as H-transforms \cite{Mes2}. For a discussion of these constructions from an algebraic point of view, see, e.g., \cite{NEG2}. In order to see how the constructions can be understood within the present framework, see \cite{Vet3}.

In the latter paper \cite{Vet3}, a number of examples of Archimedean real coextensions were already provided. Hence we will focus here on those aspects that are in the present paper newly exhibited. We moreover demonstrate some interesting aspects of coextensions by a semilattice.

We start with an easy example. Let $L_3$ be the three-element \L ukasiewicz chain; its Cayley tomonoid is depicted in Figure \ref{fig:t-Norm-1-2} (left top). We extend $L_3$ by the semilattice $(0,1]$; to this end, we expand the bottom element to a chain that is left-closed right-open and on which the action of the filter is reversed G\" odel. We are led to the nilpotent minimum t-norm $\und_1$; cf.\ again Figure \ref{fig:t-Norm-1-2} (left bottom):
\[ a \und_1 b \;=\;
\begin{cases} a \wedge b & \text{if } a + b > 1, \\
                       0 & \text{otherwise,}
\end{cases} \]
where $a, b \in [0,1]$.

\begin{figure}[ht!]
\begin{center}
\includegraphics[width=0.8\textwidth]{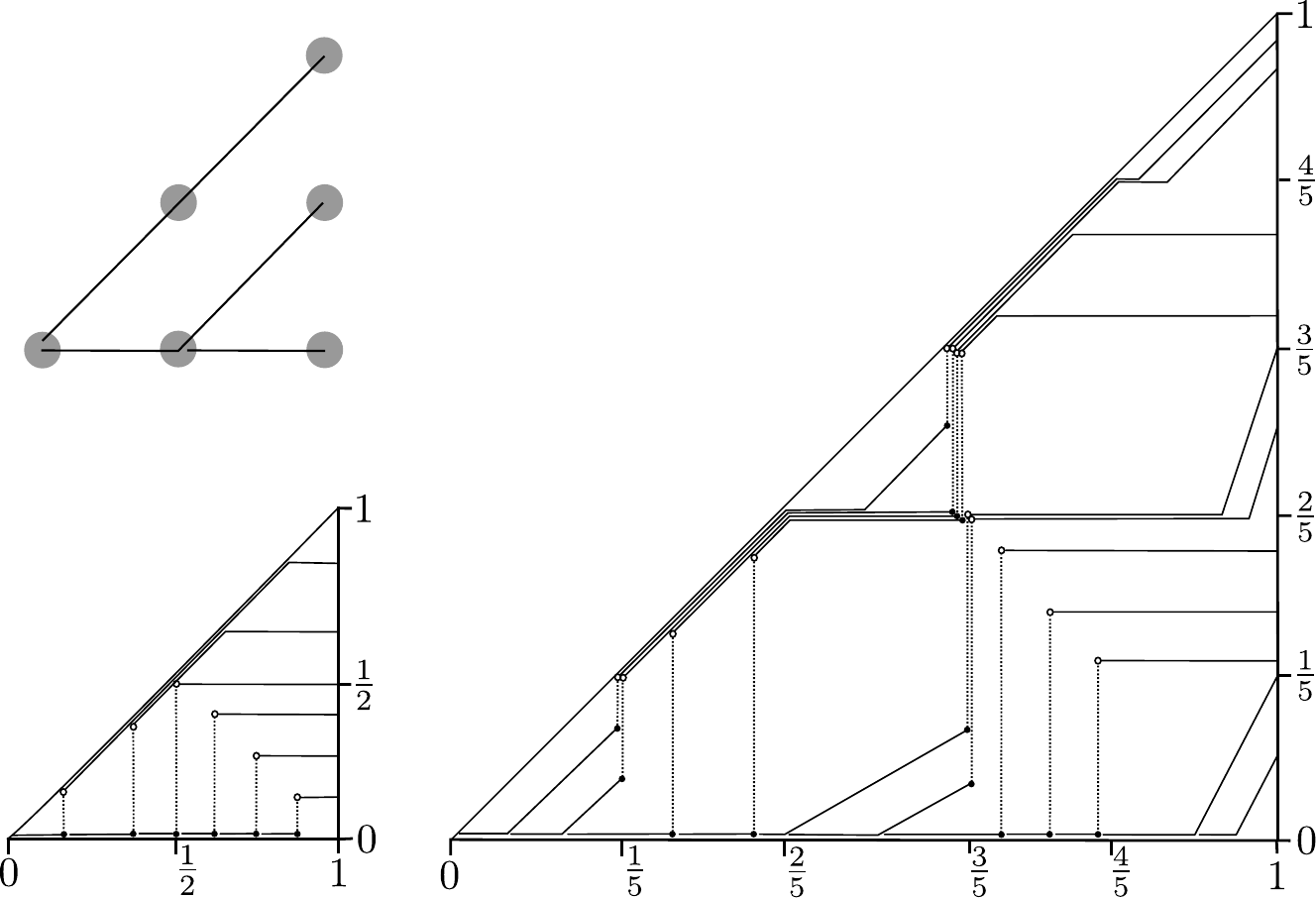}
\caption{The Cayley tomonoids of $L_3$ and the t-norm monoids based on $\und_1$, $\und_2$.}
\label{fig:t-Norm-1-2}
\end{center}
\end{figure}

Next, we extend the t-norm monoid $([0,1]; \leq, \und_1, 1)$ once again. The extending filter is this time the \L ukasiewicz tomonoid and the elements $\frac 1 2$ and $0$ are both expanded to (non-trivial) closed intervals. We choose the coefficients of the homomorphism of the filter to the respective composition tomonoid to be $3$ and $2$, respectively. The resulting t-norm is given as follows; cf.\ Figure \ref{fig:t-Norm-1-2} (right):
\[ a \und_2 b \;=\;
\begin{cases}
(a + b - 1) \vee \frac 4 5
  & \text{\rm if } a, b \geq \frac 4 5, \\
a & \text{\rm if } b \geq \frac 4 5, \text{\rm and } \frac 1 5 < a < \frac 2 5 
    \text{ \rm or } \frac 3 5 < a < \frac 4 5, \\
(a + 3b - 3) \vee \frac 2 5
  & \text{\rm if } b \geq \frac 4 5 
    \text{ \rm and } \frac 2 5 \leq a \leq \frac 3 5, \\
(a + 2b - 2) \vee 0
  & \text{\rm if } b \geq \frac 4 5 \text{ \rm and } a \leq \frac 1 5, \\
a \wedge b
  & \text{\rm if } \frac 3 5 < a, b < \frac 4 5, \\
\frac 2 5
  & \text{\rm if } \frac 3 5 < b < \frac 4 5
    \text{ \rm and } \frac 2 5 \leq a \leq \frac 3 5, \\
a
  & \text{\rm if } \frac 3 5 < b < \frac 4 5
    \text{ \rm and } \frac 1 5 < a < \frac 2 5
    \text{ \rm and } a + b > 1, \\
0
  & \text{\rm if } \frac 3 5 < b < \frac 4 5
    \text{ \rm and } \frac 1 5 < a < \frac 2 5
    \text{ \rm and } a + b \leq 1, \\
0
  & \text{\rm if } \frac 3 5 < b < \frac 4 5
    \text{ \rm and } a \leq \frac 1 5, \\
\frac 2 3 (a + b - 1) \vee 0
  & \text{\rm if } \frac 2 5 \leq a, b \leq \frac 3 5, \\
0
  & \text{\rm if } b \leq \frac 3 5
    \text{ \rm and } a < \frac 2 5
\end{cases} \]
for $a, b \in [0,1]$. As usual, we specify t-norms such that the full definition is achieved only by making use of the commutativity. Note the difference between, say, $\Lambda^{[\frac 2 5, \frac 3 5], [0, \frac 1 5]}$ and the corresponding entry in Figure \ref{fig:Quadratalgebren}.

Our next example demonstrates that, for some congruence classes $R$ and $S$ of a real coextension, $\Lambda^{R,S}$ can be properly contained in the set $\Xi^{R,S}$ as specified in Lemma \ref{lem:Xi}, even if the latter set is not bounded from above. We start with the five-element tomonoid $\mathcal L$ specified in Figure \ref{fig:t-Norm-3} (left). We extend $\mathcal L$ by the product tomonoid and we choose the equivalence classes to be left-closed right-open, a singleton, and three times left-open right-closed, respectively. The following t-norm may arise; cf.\ Figure \ref{fig:t-Norm-3} (right):
\[ a \und_3 b \;=\; 
\begin{cases}
4ab - 3a - 3b + 3
  & \text{\rm if } a, b > \frac 3 4, \\
4ab - 3a - 2b + 2
  & \text{\rm if } b > \frac 3 4 \text{ \rm and } \frac 1 2 < a \leq \frac 3 4, \\ 
4ab - 3a - b + 1
  & \text{\rm if } b > \frac 3 4 \text{ \rm and } \frac 1 4 < a \leq \frac 1 2, \\ 
\frac{a+b-1}{4b-3} \vee 0
  & \text{\rm if } b > \frac 3 4 \text{ \rm and } a \leq \frac 1 4, \\ 
\frac 2 3 (2ab - a - b + \frac 7 8)
  & \text{\rm if } \frac 1 2 < a, b \leq \frac 3 4, \\
\frac 1 4 (1 - \frac 1{4(4a-1)(2b-1)}) \vee 0
  & \text{\rm if } \frac 1 2 < b \leq \frac 3 4
    \text{ \rm and } \frac 1 4 < a \leq \frac 1 2, \\
0 & \text{\rm if } b \leq \frac 3 4
    \text{ \rm and } a \leq \frac 1 4, \\
  & \text{\rm or } a, b \leq \frac 1 2. \\
\end{cases} \]
Note that we have chosen $\Lambda^{[\frac 1 2, \frac 3 4], [\frac 1 4, \frac 1 2]}$, $\Lambda^{[\frac 1 2, \frac 3 4], [0, \frac 1 4]}$, and $\Lambda^{[\frac 1 4, \frac 1 2], [0, \frac 1 4]}$ to contain a proper subset of the set of all mappings that could be included according to Theorem \ref{thm:LambdaRS-product}.

\begin{figure}[ht!]
\begin{center}
\includegraphics[width=0.8\textwidth]{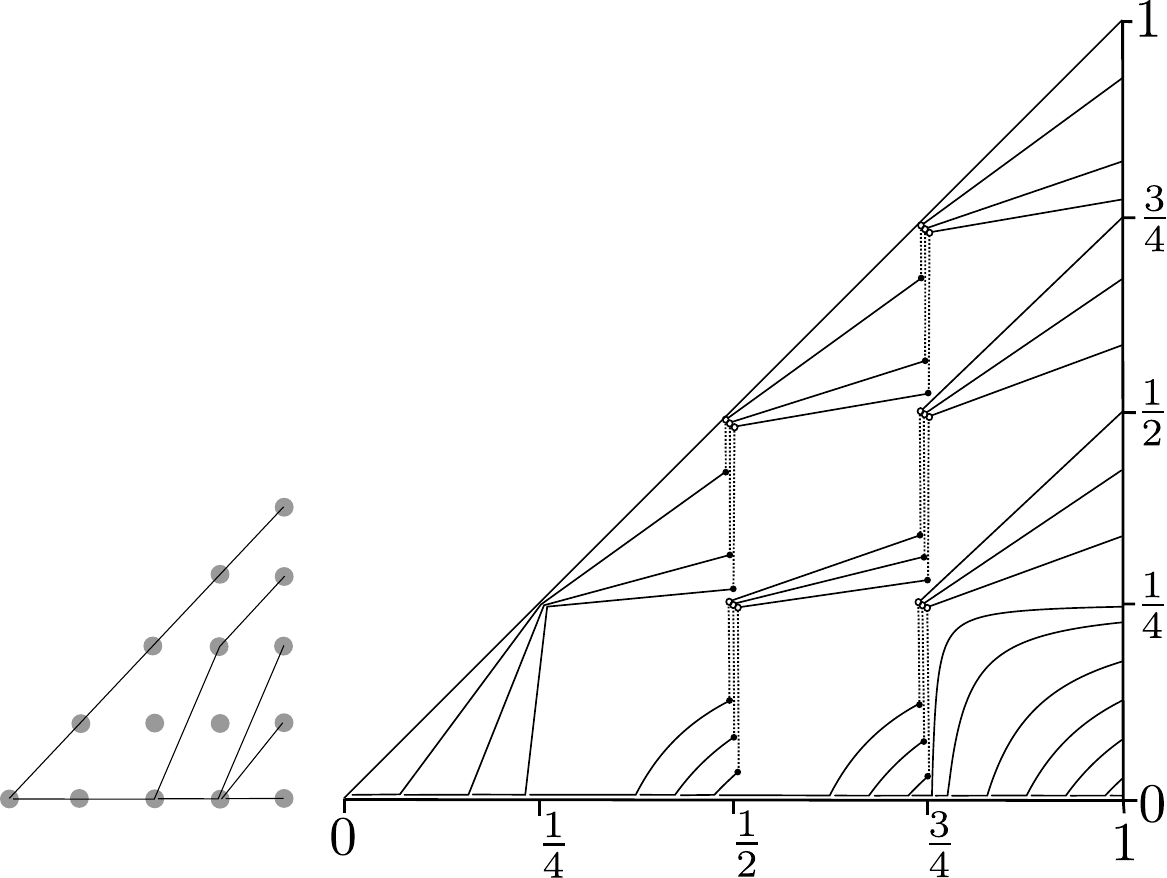}
\caption{The t-norm $\und_3$.}
\label{fig:t-Norm-3}
\end{center}
\end{figure}

We finally indicate a further example of a semilattice coextension, which is less straightforward than $\und_1$ above. We extend the same five-element tomonoid $\mathcal L$ as in the previous example and we use the same congruence classes; but this time the extending filter is the semilattice with the universe $(0,1]$. This may lead to the following t-norm; cf.\ Figure \ref{fig:t-Norm-4}:
\[ a \und_4 b \;=\; 
\begin{cases}
a \wedge b
  & \text{\rm if } a, b > \frac 3 4, \\
a \wedge (b - \frac 1 4)
  & \text{\rm if } b > \frac 3 4 \text{ \rm and } \frac 1 2 < a \leq \frac 3 4, \\ 
a \wedge (b - \frac 1 2)
  & \text{\rm if } b > \frac 3 4 \text{ \rm and } \frac 1 4 < a \leq \frac 1 2, \\ 
a
  & \text{\rm if } b > \frac 3 4 
    \text{ \rm and } a < \frac 1 4
    \text{ \rm and } a + b > 1, \\ 
0
  & \text{\rm if } b > \frac 3 4 
    \text{ \rm and } a < \frac 1 4
    \text{ \rm and } a + b \leq 1, \\ 
(a - \frac 1 4) \wedge (b - \frac 1 4) \wedge \frac 7{16}
  & \text{\rm if } \frac 1 2 < a, b \leq \frac 3 4, \\
\frac 1 8
  & \text{\rm if } \frac 5 8 < b \leq \frac 3 4
    \text{ and } \frac 3 8 < a \leq \frac 1 2, \\
0 & \text{\rm if } b \leq \frac 3 4
    \text{ and } a \leq \frac 3 8, \\
  & \text{\rm or } b \leq \frac 5 8
    \text{ and } a \leq \frac 1 2. \\
\end{cases} \]
Note that, again, $\Lambda^{[\frac 1 2, \frac 3 4], [\frac 1 4, \frac 1 2]}$, $\Lambda^{[\frac 1 2, \frac 3 4], [0, \frac 1 4]}$, and $\Lambda^{[\frac 1 4, \frac 1 2], [0, \frac 1 4]}$ do not contain all the mappings that we could include. In fact $\Lambda^{[\frac 1 2, \frac 3 4], [0, \frac 1 4]}$ and $\Lambda^{[\frac 1 4, \frac 1 2], [0, \frac 1 4]}$ contain only two elements and are hence not even downwards closed subsets of the set of all possible mappings.

\begin{figure}[ht!]
\begin{center}
\includegraphics[width=0.8\textwidth]{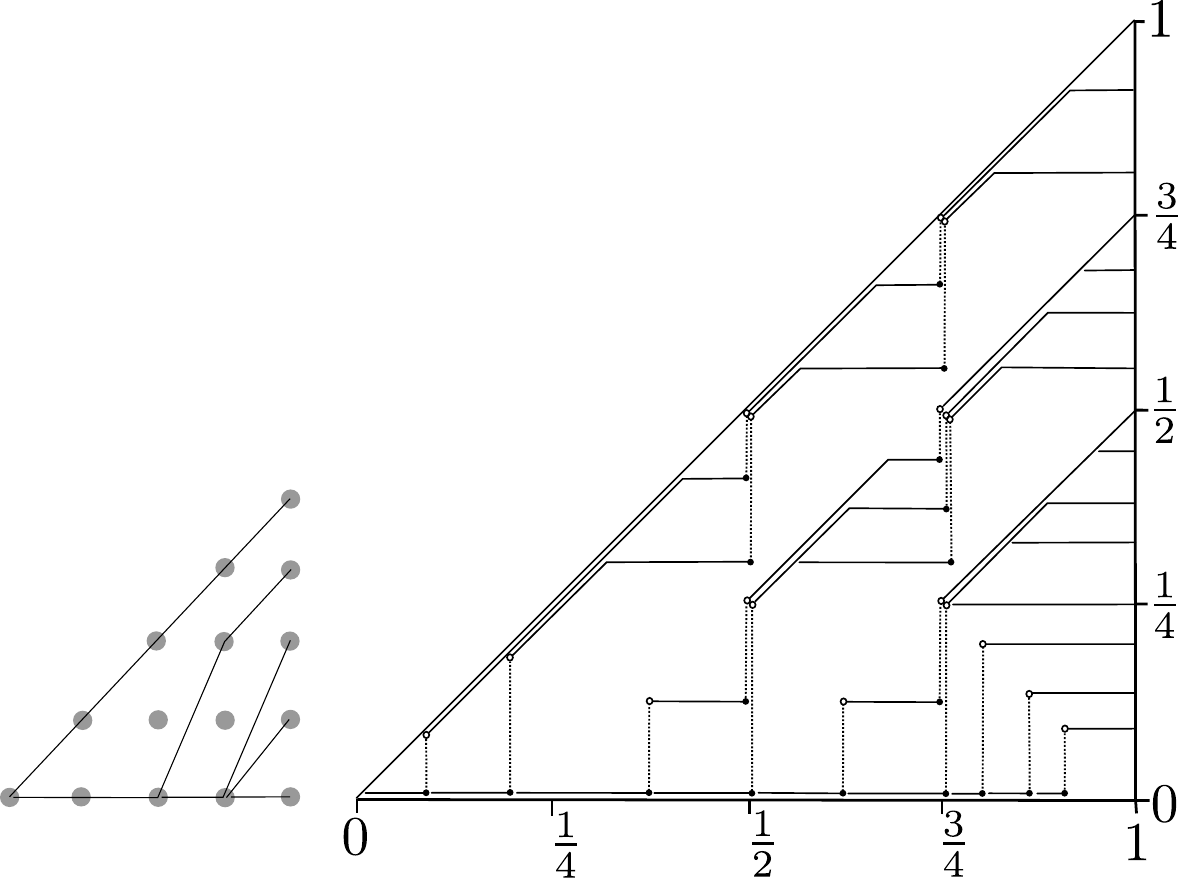}
\caption{The t-norm $\und_4$.}
\label{fig:t-Norm-4}
\end{center}
\end{figure}

\section{Conclusion}
\label{sec:conclusion}

In our previous papers \cite{Vet1,Vet2,Vet3}, we have demonstrated that left-continuous t-norms can be constructed in a way reminding of a jigsaw puzzle, composing triangular or rectangular pieces in order to obtain the Cayley tomonoid associated with a new t-norm. The geometric intuition has an algebraic counterpart: we construct real coextensions of quantic, negative, commutative tomonoids. We have discussed such coextensions, focusing on two situations. On the one hand, we considered the case that the extending filter is Archimedean and on the basis of our results from \cite{Vet2}, we have determined in the present work all relevant pieces needed for the specification of the coextended tomonoid. On the other hand, we have included in the discussion a further type of coextension, namely, the case that the extending filter is a semilattice. Here, a semilattice is a chain endowed with the minimum as the monoidal product.

It should be noted that the theory of Archimedean real coextensions could still be further developed. Although we have provided the means of describing such a coextension, the construction can still be involved. We have seen that the sets $\Lambda^{R,S}$ of restrictions of the translations to a congruence class $R$ in domain and $S$ in range, is to be chosen as a subset of a certain set of mappings from $R$ to $S$; $\Lambda^{R,S}$ can be unbounded but can also be determined by a largest mapping $\zeta \colon R \to S$. The choice of $\zeta$ can in general not be done in an arbitrary way and it remains to determine the exact possibilities.

A continuation of our work is possible in several respects. On the one hand, the class of covered t-norms can be further enlarged, e.g., by including the inverse limit of tomonoids, in the sense demonstrated in \cite{Mes2}. On the other hand, it is open if a larger class of operations is accessible to our method. For instance, uninorms might be examined in the present framework.

\subsubsection*{Acknowledgement}

The author acknowledges the support by the Austrian Science Fund (FWF): project I 1923-N25. He is moreover grateful to the anonymous reviewers for their valuable comments and useful suggestions, which helped to improve this paper.

\end{document}